\newtheorem{thm}{Theorem}
\newtheorem{cor}[thm]{Corollary}
\newtheorem{lem}[thm]{Lemma}
\newtheorem{prop}[thm]{Proposition}
\newtheorem{claim}[thm]{Claim}
\newtheorem{fact}[thm]{Fact}
\newtheorem{defn}[thm]{Definition}
\theoremstyle{definition}
\newtheorem{examp}{Example}
\newcommand{\nn}{\mathbb{N}}
\newcommand{\ee}{\varepsilon}
\newcommand{\suc}{\mathrm{Succ}}
\newcommand{\immsuc}{\mathrm{ImmSucc}}
\newcommand{\strong}{\mathrm{Str}}
\newcommand{\ci}{\mathrm{I}}
\newcommand{\mil}{\mathrm{Mil}}
\newcommand{\rel}{\mathrm{Rel}}
\newcommand{\free}{\mathrm{Fr}}
\newcommand{\bfcr}{\mathbf{R}}
\newcommand{\bfcs}{\mathbf{S}}
\newcommand{\bfct}{\mathbf{T}}
\newcommand{\meg}{\geqslant}
\newcommand{\mik}{\leqslant}
\newcommand{\lex}{<_{\mathrm{lex}}}
\newcommand{\con}{\smallfrown}
\begin{document}

\title{Measurable events indexed by trees}

\author{Pandelis Dodos, Vassilis Kanellopoulos and Konstantinos Tyros}

\address{Department of Mathematics, University of Athens, Panepistimiopolis 157 84, Athens, Greece}
\email{pdodos@math.uoa.gr}

\address{National Technical University of Athens, Faculty of Applied Sciences,
Department of Mathematics, Zografou Campus, 157 80, Athens, Greece}
\email{bkanel@math.ntua.gr}

\address{Department of Mathematics, University of Toronto, Toronto, Canada M5S 2E4}
\email{k.tyros@utoronto.ca}

\thanks{2000 \textit{Mathematics Subject Classification}: 05D10, 05C05.}
\thanks{\textit{Key words}: homogeneous trees, strong subtrees, independence.}
\thanks{The first named author was supported by NSF grant DMS-0903558.}

\maketitle


\begin{abstract}
A tree $T$ is said to be homogeneous if it is uniquely rooted and there exists an integer $b\meg 2$, called the branching number
of $T$, such that every $t\in T$ has exactly $b$ immediate successors. We study the behavior of measurable events in probability
spaces indexed by homogeneous trees.

Precisely, we show that for every integer $b\meg 2$ and every integer $n\meg 1$ there exists an integer $q(b,n)$ with the following property.
If $T$ is a homogeneous tree with branching number $b$ and $\{A_t:t\in T\}$ is a family of measurable events in a probability space
$(\Omega,\Sigma,\mu)$ satisfying $\mu(A_t)\meg\ee>0$ for every $t\in T$, then for every $0<\theta<\ee$ there exists a strong subtree
$S$ of $T$ of infinite height such that for every finite subset $F$ of $S$ of cardinality $n\meg 1$ we have
\[ \mu\Big( \bigcap_{t\in F} A_t\Big) \meg \theta^{q(b,n)}. \]
In fact, we can take $q(b,n)= \big((2^b-1)^{2n-1}-1\big)\cdot(2^b-2)^{-1}$. A finite version of this result is also obtained.
\end{abstract}


\section{Introduction}

\subsection{Overview}

Let $(\Omega,\Sigma,\mu)$ be a probability space and $\{A_i:i\in\nn\}$ a family of measurable events in $(\Omega,\Sigma,\mu)$
satisfying $\mu(A_i)\meg \ee>0$ for every $i\in\nn$. It is well-known (and easy to see) that for every $0<\theta<\ee$ there exist
$i,j\in\nn$ with $i\neq j$ such that $\mu(A_i\cap A_j)\meg \theta^2$. Using the classical Ramsey Theorem \cite{Ra} and iterating
this basic fact, we get the following.
\medskip

\noindent \textit{If $\{A_i:i\in\nn\}$ is a family of measurable events in a probability space $(\Omega,\Sigma,\mu)$ satisfying
$\mu(A_i)\meg\ee>0$ for every $i\in\nn$, then for every $0<\theta<\ee$ there exists an infinite subset $L$ of $\nn$ such that
for every integer $n\meg 1$ and every finite subset $F$ of $L$ of cardinality $n$ we have}
\[ \mu \Big(\bigcap_{i\in F} A_i\Big) \meg \theta^n. \]

\noindent In other words, if we are given a sequence of measurable events in a probability space and we are allowed to refine
(i.e. to pass to a subsequence), then we may do as if the events are at least as correlated as if they were independent.

Now suppose that the events are not indexed by the integers but are indexed by another ``structured" set $\mathbb{S}$. A natural
problem is to decide whether the aforementioned result is valid in the new setting. Namely, given a family $\{A_s:s\in\mathbb{S}\}$
of measurable events in a probability space $(\Omega,\Sigma,\mu)$ satisfying $\mu(A_s)\meg\ee>0$ for every $s\in\mathbb{S}$,
is it possible to find a ``substructure" $\mathbb{S}'$ of $\mathbb{S}$ such that the events in the family $\{A_s:s\in\mathbb{S}'\}$
are highly correlated? And if yes, then can we get explicit (and, hopefully, optimal) lower bounds for their joint probability?
Of course what ``substructure" is, will depend on the nature of the index set $\mathbb{S}$.  From a combinatorial perspective,
these questions are of particular importance when the ``structured" set $\mathbb{S}$ is a \textit{Ramsey space}, a notion
introduced by T. J. Carlson in \cite{C} and further developed by S. Todorcevic in \cite{To}.

Various versions have been studied in the literature and several results have been obtained so far. Undoubtedly, the most well-known
and heavily investigated case is when the events are indexed by the Ramsey space $W(\mathbb{A})$ of all finite words over a non-empty
finite alphabet $\mathbb{A}$. Specifically, it was shown by H. Furstenberg and Y. Katznelson in \cite{FK} that for every $0<\ee\mik 1$
and every integer $b\meg 2$ there exists a strictly positive constant $\theta(\ee,b)$ with the following property. If $\mathbb{A}$
is an alphabet with $b$ letters and $\{A_w:w\in W(\mathbb{A})\}$ is a family of measurable events in a probability space $(\Omega,\Sigma,\mu)$ satisfying $\mu(A_w)\meg\ee$ for every $w\in W(\mathbb{A})$, then there exists a combinatorial line $\mathbb{L}$ (see \cite{HJ}) such that
\[ \mu\big( \bigcap_{w\in\mathbb{L}} A_w\big) \meg \theta(\ee,b).\]
In fact, this statement is equivalent to the density Hales--Jewett Theorem. Although powerful, the arguments
in \cite{FK} are not effective and give no estimate on the constant $\theta(\ee,b)$. Explicit lower bounds can be extracted
from the recent ``polymath" proof of the density Hales--Jewett Theorem \cite{Pol}.

Another version has been studied in \cite{DKK}. The events in this case were assumed to be of a rather ``canonical" form and the index set
$\mathbb{S}$ was the level product of a finite sequence of homogeneous trees; we recall that a tree $T$ is said to be \textit{homogeneous}
if it is uniquely rooted and there exists an integer $b\meg 2$, called the \textit{branching number} of $T$, such that every $t\in T$
has exactly $b$ immediate successors. We will not state explicitly this result since this requires a fair amount of terminology.
We point out, however, that it was needed as a tool in the proof of the density version of the Halpern--L\"{a}uchli Theorem \cite{HL}.

\subsection{The main results}

Our goal in this paper is to study the above problem when the index set $\mathbb{S}$ is a (finite or infinite) homogeneous tree
and to obtain explicit and fairly ``civilized" lower bounds. Of course, such a problem can be also studied if the events are indexed
by a boundedly branching tree or, even more generally, by a finitely branching tree. However, as it is shown in Appendix A, the
case of boundedly branching trees is essentially reduced to the case of homogeneous trees, while for finitely branching but
not boundedly branching trees one can construct examples showing that our results do not hold in this wider category.

In the context of trees the most natural (and practically useful) notion of ``substructure" is that of a \textit{strong subtree}.
We recall that a (finite or infinite) subtree $S$ of a uniquely rooted tree $T$ is said to be strong provided that: (a) $S$ is uniquely
rooted and balanced (that is, all maximal chains of $S$ have the same cardinality), (b) every level of $S$ is a subset of some level of $T$,
and (c) for every non-leaf node $s\in S$ and every immediate successor $t$ of $s$ in $T$ there exists a unique immediate successor $s'$ of $s$
in $S$ with $t\mik s'$. The last condition is the most important one and expresses a basic combinatorial requirement, namely that
a strong subtree of $T$ must respect the ``tree structure" of $T$ (it implies, for instance, that a strong subtree of infinite height
of a homogeneous tree is also homogeneous and has the same branching number). Although the notion of a strong subtree was
isolated in the late 1960s, it was highlighted with the work of K. Milliken \cite{Mi1,Mi2} who showed that the family of strong subtrees
of a uniquely rooted and finitely branching tree is partition regular.

\subsubsection{The infinite case}

We are ready to state the first main result of the paper.
\begin{thm} \label{it1}
Let $T$ be a homogeneous tree with branching number $b$. Also let $\{A_t:t\in T\}$ be a family of measurable events in a probability
space $(\Omega,\Sigma,\mu)$ satisfying $\mu(A_t)\meg\ee>0$ for every $t\in T$. Then for every $0<\theta<\ee$ there exists a strong
subtree $S$ of $T$ of infinite height such that for every integer $k\meg 1$ and every strong subtree $R$ of $S$ of height $k$ we have
\begin{equation} \label{ie1}
\mu\Big( \bigcap_{t\in R} A_t\Big) \meg \theta^{p(b,k)}
\end{equation}
where
\begin{equation} \label{ie2}
p(b,k)= \frac{(2^b-1)^k-1}{2^b-2}.
\end{equation}
\end{thm}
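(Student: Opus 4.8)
The plan is to derive Theorem~\ref{it1} from a ``local'' version for a single fixed height $k$, by combining Milliken's tree theorem with a fusion argument, and then to prove that local version by induction on $k$. First I would record the reduction: it suffices to prove, separately for each integer $k\meg 1$, that for every family $\{A_t:t\in T\}$ as in the hypothesis and every $0<\theta<\ee$ there is an infinite strong subtree $S$ of $T$ such that $\mu(\bigcap_{t\in R}A_t)\meg\theta^{p(b,k)}$ for \emph{every} strong subtree $R$ of $S$ of height $k$. Granting this for all $k$, Theorem~\ref{it1} follows by the standard fusion for strong subtrees (a relativized form of Milliken's theorem in which a finite initial segment is prescribed): one fuses a decreasing sequence $T\supseteq S^{(1)}\supseteq S^{(2)}\supseteq\cdots$, with $S^{(k)}$ witnessing the $k$-th instance, into a single infinite strong subtree all of whose height-$k$ strong subtrees are, for every $k$, strong subtrees of $S^{(k)}$. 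Next, fixing $k$ and $\theta<\theta'<\ee$ and colouring each height-$k$ strong subtree $R$ of $T$ by whether or not $\mu(\bigcap_{t\in R}A_t)\meg(\theta')^{p(b,k)}$, Milliken's theorem produces an infinite strong subtree on which this colouring is constant, and such a subtree is itself homogeneous with branching number $b$; so the displayed statement reduces to the assertion $(\star_k)$: \emph{every} family as in the hypothesis, over \emph{any} homogeneous tree with branching number $b$, admits at least one height-$k$ strong subtree $R$ with $\mu(\bigcap_{t\in R}A_t)\meg(\theta')^{p(b,k)}$. (Then the monochromatic colour is forced to be the ``large'' one, and $\theta'>\theta$ leaves room to spare.)

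Next I would prove $(\star_k)$ by induction on $k$. The case $k=1$ is immediate: $p(b,1)=1$ and $\mu(A_t)\meg\ee>\theta'$ for any single node $t$. For the step $k\to k+1$, first combine $(\star_k)$ with the Milliken reduction above to pass to a subtree on which \emph{every} height-$k$ strong subtree $R$ already satisfies $\mu(\bigcap_{t\in R}A_t)\meg\delta$, where $\delta=(\tilde\theta)^{p(b,k)}$ for a parameter $\theta'<\tilde\theta<\ee$ to be fixed. Now decompose a candidate height-$(k+1)$ strong subtree $R'$ into its root $r$ and the $b$ cone-pieces hanging above the $b$ immediate successors of $r$, each a height-$k$ strong subtree, and let $B_i$ $(1\mik i\mik b)$ be the intersection of the events $A_t$ over the $i$-th cone-piece, so that $\mu(B_i)\meg\delta$ for every admissible choice of the pieces. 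Since $p(b,k+1)=(2^b-1)\,p(b,k)+1$, the goal $\mu(A_r\cap B_1\cap\cdots\cap B_b)\meg(\theta')^{p(b,k+1)}$ asks for one factor coming from the root (where $\mu(A_r)\meg\ee$) together with $2^b-1$ further factors, each of size essentially $\delta$, coming from the cone-pieces --- rather than the $b$ factors one would get from independence. The mechanism is a Halpern--L\"{a}uchli-type argument for the $b$ cones above $r$: exploiting the freedom to vary $r$ over a level of $T$ of large cardinality and to vary each cone-piece over all height-$k$ strong subtrees sharing a common increasing sequence of levels, one runs a chain of $2^b-1$ applications of the elementary correlation estimate of the Introduction (any infinite family of events of measure $\meg\eta$ contains two events whose intersection has measure $\meg(\eta')^2$, for any $\eta'<\eta$), indexed by the nonempty subsets $I\subseteq\{1,\dots,b\}$ taken in increasing order of size; the application indexed by $I$ forces the partial intersection $\mu(A_r\cap\bigcap_{i\in I}B_i)$ to be large in terms of the already-controlled quantities attached to the proper subsets of $I$, at the cost of essentially one further factor $\delta$. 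The count $2^b-1$ of nonempty subsets is exactly the source of the exponent in $p(b,k+1)$, and this step is where the complete absence of independence among the $B_i$ is absorbed; a finite chain of auxiliary parameters between $\theta'$ and $\ee$ takes care of the small losses of the individual applications.

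The main obstacle is this last step: orchestrating the $2^b-1$ applications of the correlation estimate so that they compose with the claimed total loss --- a factor of order $\delta^{2^b-1}$, and no worse --- while respecting the rigidity of the strong-subtree structure, namely that all $b$ cone-pieces must be supported on \emph{one common} sequence of levels. It is this coupling that blocks a naive ``one cone at a time'' conditioning argument and forces the correlation estimates to be interleaved with a careful pigeonholing over the admissible roots $r$ and the admissible level sequences, so that what is finally extracted is an honest height-$(k+1)$ strong subtree rather than $b$ pieces with no common root. Once this is in place, the base case, the Milliken colourings, and the fusion over $k$ are all routine.
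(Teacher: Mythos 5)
Your outline matches the paper in spirit (induct on the height, exponent recursion $p(b,k+1)=(2^b-1)p(b,k)+1$, Milliken to upgrade ``there exists a good height-$k$ subtree'' to ``all height-$k$ subtrees are good''), but it has two genuine gaps. First, the fusion over $k$ is not justified as stated. A decreasing sequence $S^{(1)}\supseteq S^{(2)}\supseteq\cdots$ with $S^{(k)}$ good for height $k$ can only be fused into an infinite tree if the $S^{(k)}$ stabilize on initial segments, and there is no ``Milliken with a prescribed initial segment'': a colouring of height-$k$ subtrees that depends on whether the root is a frozen node can never be made monochromatic once that node is committed. The problematic objects are exactly the height-$k$ subtrees of the fused tree that straddle the frozen part and the part still to be chosen; their root events are already fixed, so goodness for them cannot be bought by later refinement unless you maintain a \emph{conditional} invariant relative to the intersection of the frozen events. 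This is precisely how the paper proceeds: it recursively builds one tree $W$ controlling only the \emph{initial} subtrees $W\upharpoonright k$, with conditional bounds of the form $\mu\big(\bigcap A_{t_i}\,\big|\,\bigcap_{t\in W\upharpoonright k}A_t\big)\meg\ee_{k+1}$ as the invariant, and then applies the infinite Milliken theorem \emph{once} to the closed family $\mathcal{C}$ of infinite strong subtrees all of whose initial subtrees are good; since every height-$k$ strong subtree of the resulting $S$ is an initial subtree of some member of $\strong_{\infty}(S)\subseteq\mathcal{C}$, the full statement follows. That pivot (initial subtrees $+$ one closed-set Milliken application) is the missing idea in your reduction.

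Second, the inductive step --- which you yourself flag as the main obstacle --- is not carried out, and the mechanism you sketch does not work as described: the elementary correlation estimate takes a family of events of measure $\meg m$ and returns two of them with intersection $\meg (m')^2$, i.e.\ it \emph{squares} (doubles the exponent); it cannot be run ``once per nonempty subset $I\subseteq\{1,\dots,b\}$ at the cost of one further factor $\delta$''. Moreover, a direct Shelah-line-type argument at height $k+1$ runs into exactly the coupling you mention (the $b$ cone-pieces must share one level set and one root), and already the first doubling step breaks down, since the union of two ``root-only'' configurations does not contain a root plus a height-$k$ cone-piece. The paper's resolution is structurally different: it proves only a \emph{relativized height-$2$} statement (conditional exponent $2^b-1$ given the root, via generalized Shelah lines, $b$ doubling steps and a stabilization argument giving the numbers $\mathrm{Rel}(b,\theta,\ee)$), extends it to a vector tree of cones (Lemma \ref{newlemma22}), and then reduces height $k+1$ to height $k$ by applying the inductive hypothesis, in the conditional measure $\mu_Y$, to the derived events $B_u=\bigcap_{i}A_{\mathrm{I}_i(u)}$ indexed by a single copy of $b^{<\nn}$ through the canonical isomorphisms onto the cones. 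Pulling one strong subtree of $b^{<\nn}$ through all $b$ isomorphisms simultaneously is what produces cone-pieces with a common root and common level set for free, and it is exactly this repackaging, absent from your plan, that absorbs the obstacle you identified.
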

Notice that a strong subtree $R$ of height $k$ of a homogeneous tree with branching number $b$ has cardinality
$(b^k-1)/(b-1)$. Therefore, the exponent appearing in the right-hand side of inequality (\ref{ie1}) depends polynomially on the
cardinality of $R$; specifically, if $R$ has cardinality $n$, then the corresponding exponent is $O(n^{b/\log b})$.

It is shown in Appendix B that every non-empty finite subset $F$ of a homogeneous tree is contained in a strong subtree
of height $2|F|-1$. This fact and Theorem \ref{it1} yield the following.
\begin{cor} \label{ic2}
Let $T$ be a homogeneous tree with branching number $b$. Also let $\{A_t:t\in T\}$ be a family of measurable events in a probability
space $(\Omega,\Sigma,\mu)$ satisfying $\mu(A_t)\meg\ee>0$ for every $t\in T$. Then for every $0<\theta<\ee$ there exists a strong
subtree $S$ of $T$ of infinite height such that for every integer $n\meg 1$ and every subset $F$ of $S$ of cardinality $n$ we have
\begin{equation} \label{ie3}
\mu\Big( \bigcap_{t\in F} A_t\Big) \meg \theta^{q(b,n)}
\end{equation}
where
\begin{equation} \label{ie4}
q(b,n)= \frac{(2^b-1)^{2n-1}-1}{2^b-2}.
\end{equation}
\end{cor}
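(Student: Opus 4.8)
The plan is to deduce Corollary \ref{ic2} directly from Theorem \ref{it1}, using only two elementary facts; there is no new analytic content to produce. The first fact is purely arithmetic: comparing the exponents (\ref{ie2}) and (\ref{ie4}) one sees that
\[ q(b,n)=\frac{(2^b-1)^{2n-1}-1}{2^b-2}=p(b,2n-1) \]
for every integer $n\meg 1$. Hence it suffices to show that every $n$-element subset $F$ of the subtree $S$ furnished by Theorem \ref{it1} is contained in some strong subtree $R$ of $S$ of height $2n-1$, and then to quote inequality (\ref{ie1}) for $R$ together with the monotonicity of $\mu$.

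The second fact concerns the two structural ingredients we need. On the one hand, the relation ``is a strong subtree of'' is transitive, so that a strong subtree of $S$ is automatically a strong subtree of $T$; in particular inequality (\ref{ie1}) applies verbatim to every strong subtree of $S$. On the other hand, as remarked in the Introduction (as a consequence of condition (c) in the definition of a strong subtree), a strong subtree of infinite height of a homogeneous tree with branching number $b$ is again homogeneous with branching number $b$. Thus the tree $S$ given by Theorem \ref{it1} is itself a homogeneous tree with branching number $b$, and the embedding statement proved in Appendix B — that every non-empty finite subset of a homogeneous tree lies in a strong subtree of height $2|F|-1$ — may be applied to $S$.

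With these in place the assembly is immediate. First apply Theorem \ref{it1} to obtain a strong subtree $S$ of $T$ of infinite height satisfying (\ref{ie1}). Fix $n\meg 1$ and a subset $F$ of $S$ with $|F|=n$; by the result of Appendix B applied to the homogeneous tree $S$ there is a strong subtree $R$ of $S$ of height $2n-1$ with $F\subseteq R$. Since $R$ is a strong subtree of $S$ of height $2n-1$, inequality (\ref{ie1}) yields $\mu\big(\bigcap_{t\in R}A_t\big)\meg\theta^{p(b,2n-1)}$, and since $F\subseteq R$ gives $\bigcap_{t\in F}A_t\supseteq\bigcap_{t\in R}A_t$, monotonicity of the measure gives
\[ \mu\Big(\bigcap_{t\in F}A_t\Big)\meg\mu\Big(\bigcap_{t\in R}A_t\Big)\meg\theta^{p(b,2n-1)}=\theta^{q(b,n)}, \]
which is exactly (\ref{ie3}). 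I do not expect any real obstacle: the entire strength of the corollary is already contained in Theorem \ref{it1} and in the embedding lemma of Appendix B, and the only point requiring a moment's care is the transitivity remark ensuring that the strong subtree of $S$ produced by Appendix B is legitimately a strong subtree of $S$ to which (\ref{ie1}) applies.
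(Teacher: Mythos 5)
Your proposal is correct and is exactly the paper's argument: the paper proves Corollary \ref{ic2} by combining Theorem \ref{it1} with Corollary \ref{AB-cor} of Appendix B, precisely as you do, with the identity $q(b,n)=p(b,2n-1)$ and monotonicity of $\mu$ supplying the conclusion. Your added remarks on transitivity of strong subtrees and on $S$ being homogeneous with branching number $b$ are correct and merely make explicit what the paper leaves implicit.
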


\subsubsection{Free sets: improving the lower bound}

Observe that the integer $q(b,n)$ obtained by Corollary \ref{ic2} depends exponentially on $n$. We do not know whether it is possible
to have polynomial dependence. However, if we restrict our attention to a certain class of finite subsets of homogeneous trees, then
we get optimal lower bounds. This class of finite sets, which we call \textit{free}, is defined in \S 6 in the main text. It includes
various well-known classes of subsets of trees (such as all finite chains, all doubletons and many more) and is sufficiently rich in
the sense that every infinite subset $A$ of a homogeneous tree contains an infinite set $B$ such that every non-empty finite subset
of $B$ is free. Related to this concept, we show the following.
\begin{thm} \label{it3}
Let $T$ be a homogeneous tree. Also let $\{A_t:t\in T\}$ be a family of measurable events in a probability space $(\Omega,\Sigma,\mu)$
satisfying $\mu(A_t)\meg\ee>0$ for every $t\in T$. Then for every $0<\theta<\ee$ there exists a strong subtree $S$ of $T$ of infinite
height such that for every integer $n\meg 1$ and every free subset $F$ of $S$ of cardinality $n$ we have
\begin{equation} \label{ie5}
\mu\Big( \bigcap_{t\in F} A_t\Big) \meg \theta^{n}.
\end{equation}
\end{thm}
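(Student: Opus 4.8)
\textbf{Proof proposal for Theorem \ref{it3}.}

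The plan is to combine the probabilistic mechanism underlying Theorem \ref{it1} with a Milliken-type pigeonhole argument tailored to free sets, so as to trade the wasteful ``one free node costs one factor of $\theta$, but enclosing it in a strong subtree of height $2|F|-1$ costs $p(b,2|F|-1)$ factors'' bookkeeping for a sharp linear count. First I would recall the basic two-set inequality: if $\mu(A)\meg\ee$ and $\mu(B)\meg\ee$ and $0<\theta<\ee$, then either $\mu(A\cap B)\meg\theta^2$, or one of the two ``conditional measures'' drops below $\theta$; iterating this along a chain gives the $\theta^n$ estimate for chains for free. The point of \emph{free} sets (as promised, defined in \S 6) should be exactly that the enumeration of a free set in a suitable order — presumably a lexicographic or length-lexicographic order compatible with the tree — allows one to peel off nodes one at a time, at each stage intersecting with a single new event at a multiplicative cost of a single power of $\theta$, rather than a power of $\theta$ proportional to the size of the enclosing strong subtree.

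The key steps, in order, would be: (1) Fix a decreasing sequence $\theta<\theta_1<\theta_2<\cdots<\ee$ and, using the finiteness of the alphabet of ``shapes'' of free sets of each cardinality, set up a coloring of an appropriate finite Milliken space (finite strong subtrees of $T$, or rather the free subsets thereof) that records, for each candidate configuration, whether the desired intersection inequality holds with the current $\theta_i$. (2) Apply Milliken's theorem (the infinite partition-regularity of strong subtrees of a finitely branching tree, cited in the introduction) together with a diagonalization over $n$ and over the finitely many shapes, to extract a single strong subtree $S$ of infinite height on which, for every free $F\subseteq S$, the relevant measure-theoretic dichotomy always falls on the ``good'' side. (3) Verify that the ``bad'' side is impossible on the nose by the mean-value / averaging argument: if every free extension of a given good free set were bad, then averaging the conditional probabilities over the $b$ immediate successors at the relevant level would force some node to have $\mu(A_t)<\ee$, a contradiction with the uniform lower bound $\ee$. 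Then a downward induction on $|F|$, running the two-set inequality once per node of $F$ in the free order, yields $\mu(\bigcap_{t\in F}A_t)\meg\theta^{|F|}$.

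The main obstacle I anticipate is precisely step (3) combined with the \emph{uniformity} needed in step (2): one must arrange that the strong subtree $S$ works \emph{simultaneously} for all $n$ and all free shapes, which requires a careful diagonalization (a ``fusion'' construction) rather than a single application of Milliken, because the number of shapes grows with $n$. One must also be careful that passing to the strong subtree $S$ does not destroy the property that a set is free — but this should be built into the definition of free in \S 6, since the introduction already asserts that every infinite subset of a homogeneous tree contains an infinite set all of whose finite subsets are free, and strong subtrees of homogeneous trees are again homogeneous. A secondary technical point is that the averaging argument in step (3) must be applied relative to the correct node of $F$ in its free enumeration; getting the order of peeling right, so that at each stage the node being removed is an ``immediate successor''-type position relative to the already-fixed part, is exactly where the structural hypothesis of freeness is used, and where the sharp exponent $n$ (rather than something larger) comes from. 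Finally, one should note that the estimate $\theta^n$ is optimal: a family of events that are genuinely independent with $\mu(A_t)=\ee$ shows no strong subtree can do better than $\ee^n$, hence no exponent smaller than $n$ is attainable uniformly in $\theta\to\ee$.
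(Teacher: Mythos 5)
Your overall architecture — a fusion/diagonalization over stages, a classification of configurations into finitely many ``shapes'' to feed into Milliken's theorem, and a peeling argument that pays one factor of $\theta$ per node of the free set via conditional two-set estimates — is indeed the skeleton of the paper's proof (the paper's conditions (C3)/(C4) and the final computation with the sequence $(\delta_n)$ are exactly your ``decreasing sequence $\theta<\theta_1<\cdots<\ee$'' bookkeeping). But your step (3), which is supposed to rule out the ``bad'' side of the Milliken dichotomy, contains a genuine gap, and it is precisely the step the whole theorem hinges on. Saying that ``if every free extension of a good free set were bad, then averaging the conditional probabilities over the $b$ immediate successors would force some node to have $\mu(A_t)<\ee$'' is not a valid argument: badness means that $\mu\big(A_t \mid \bigcap_{w\in F}A_w\big)$ is small, and this is perfectly compatible with every unconditional measure $\mu(A_t)$ being $\meg\ee$ (the events $A_t$ can be almost disjoint from $\bigcap_{w\in F}A_w$ while individually large). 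No averaging over $b$ successors can produce the required contradiction; what is needed is the second-moment pigeonhole of Lemma \ref{measure}, which requires \emph{many} events (of order $(\ee^2-\theta^2)^{-1}$), and — crucially — the pair it produces must be realizable as a configuration of the prescribed shape \emph{inside} an arbitrary strong subtree.

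This is exactly where the paper's real work lies, and it is absent from your proposal. Because doubletons (hence free sets) are \emph{not} Ramsey with respect to strong subtrees, one cannot color free subsets directly; the paper first classifies doubletons into three types, each parametrized by strong subtrees of height $3$ (so that Theorem \ref{t22} applies to each class), and then, for each type, exhibits inside $b_T^{<\nn}$ an explicit long sequence of nodes (of the form $p^k$, $q^kp^{N-1-k}$, $(qr)^kp$) any two of which realize that type; Lemma \ref{measure} applied along such a sequence is what witnesses the good side (Proposition \ref{ip6}). A stabilization argument then yields the relativized version (Corollary \ref{corfree}), which is the conditional estimate your peeling induction silently assumes: at each stage you need $\mu\big(A_t \mid \bigcap_{w\in F'}A_w\big)\meg\theta_i$ uniformly for all relevant $t$ in the subtree, and that uniform bound is supplied only by this relativized pairwise result inside the fusion, not by your averaging step. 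So the proposal is not salvageable as written: you must replace step (3) by a type classification plus explicit witness constructions and the second-moment pigeonhole, and derive the conditional bounds from the relativized pairwise correlation result rather than from the raw hypothesis $\mu(A_t)\meg\ee$.
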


\subsubsection{The finite case}

Theorem \ref{it1} has the following finite counterpart which is the third main result of the paper.
\begin{thm} \label{it4}
For every integer $b\meg 2$, every integer $k\meg 1$ and every pair of reals $0<\theta<\ee\mik 1$ there exists an integer
$N$ with the following property. If $T$ is a finite homogenous tree with branching number $b$ and of height at least $N$
and $\{A_t:t\in T\}$ is a family of measurable events in a probability space $(\Omega,\Sigma,\mu)$ satisfying
$\mu(A_t)\meg \ee$ for every $t\in T$, then there exists a strong subtree $S$ of $T$ of height $k$ such that
\begin{equation} \label{ie6}
\mu\Big( \bigcap_{t\in S} A_t\Big) \meg \theta^{p(b,k)}
\end{equation}
where $p(b,k)$ is as in (\ref{ie2}).
\end{thm}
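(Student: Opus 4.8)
The plan is to derive Theorem \ref{it4} from Theorem \ref{it1} by a standard compactness argument. Suppose, towards a contradiction, that the finite statement fails for some fixed $b\meg 2$, $k\meg 1$ and $0<\theta<\ee\mik 1$. Then for every integer $N$ there is a finite homogeneous tree $T_N$ with branching number $b$ and height at least $N$, together with a family $\{A^N_t:t\in T_N\}$ of measurable events in some probability space $(\Omega_N,\Sigma_N,\mu_N)$ with $\mu_N(A^N_t)\meg\ee$ for all $t$, such that every strong subtree $S$ of $T_N$ of height $k$ satisfies $\mu_N(\bigcap_{t\in S}A^N_t)<\theta^{p(b,k)}$. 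By truncating, we may assume $T_N$ has height exactly $N$, and we may identify each $T_N$ with the initial segment of the fixed infinite homogeneous tree $T=\bigcup_N T_N$ consisting of its first $N$ levels; thus all the $T_N$ sit coherently inside $T$.

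Next I would package the measure-theoretic data into a single probability space. The relevant invariant is the finite family of joint probabilities $\mu_N\big(\bigcap_{t\in F}A^N_t\big)$ as $F$ ranges over finite subsets of $T_N$; these are real numbers in $[0,1]$. Taking a non-principal ultrafilter $\calu$ on $\nn$ (or, equivalently, a diagonal subsequence via a compactness/tree argument since for each fixed finite $F\subseteq T$ the sequence of values stabilises along a subsequence), set $\nu(F)=\lim_{N\to\calu}\mu_N\big(\bigcap_{t\in F}A^N_t\big)$ for every finite $F\subseteq T$. One checks that the set function $F\mapsto\nu(F)$ on finite subsets of $T$ satisfies the inclusion–exclusion inequalities needed to be realised as $\mu\big(\bigcap_{t\in F}A_t\big)$ for an actual family $\{A_t:t\in T\}$ of events in a probability space $(\Omega,\Sigma,\mu)$; the cleanest way is to build $\Omega$ as a product of two-point spaces indexed by $T$, i.e. $\Omega=2^T$ with $A_t=\{\omega:\omega(t)=1\}$, and to define $\mu$ on the algebra of cylinders by the values dictated by $\nu$ through inclusion–exclusion, then extend by Carathéodory. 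In particular $\mu(A_t)=\nu(\{t\})\meg\ee>0$ for every $t\in T$.

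Now apply Theorem \ref{it1} to the family $\{A_t:t\in T\}$ with the given $\theta$: there is a strong subtree $S$ of $T$ of infinite height such that $\mu\big(\bigcap_{t\in R}A_t\big)\meg\theta^{p(b,k)}$ for every strong subtree $R$ of $S$ of height $k$. Fix one such $R$; it is a finite subset of $T$, hence of $T_N$ for all large $N$, and for such $N$ it is a strong subtree of $T_N$ of height $k$ (here one uses that the levels of $S$, being levels of $T$, are honest levels of $T_N$ once $N$ is large enough, so the defining conditions (a)--(c) of a strong subtree are inherited). Therefore $\mu_N\big(\bigcap_{t\in R}A^N_t\big)<\theta^{p(b,k)}$ for all large $N$, and passing to the ultralimit gives $\nu(R)=\mu\big(\bigcap_{t\in R}A_t\big)\mik\theta^{p(b,k)}$. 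The only potential clash is the strictness of the inequality, which an ultralimit need not preserve; to avoid it, one runs the contradiction argument with $\theta'$ chosen with $\theta<\theta'<\ee$ in the finite statement while applying Theorem \ref{it1} with the smaller $\theta$, so that the inequalities $\mu_N(\cdots)<(\theta')^{p(b,k)}$ survive the limit as $\nu(R)\mik(\theta')^{p(b,k)}<\theta^{p(b,k)}\mik\mu(\bigcap_{t\in R}A_t)$, a contradiction. The main obstacle is the second step: verifying that the ultralimit set function $\nu$ really is the joint-probability function of a genuine family of events, i.e. checking the inclusion–exclusion positivity conditions and carrying out the Carathéodory extension cleanly; everything else is bookkeeping about strong subtrees sitting inside initial segments of $T$.
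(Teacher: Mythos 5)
Your route is genuinely different from the paper's. The paper proves Theorem \ref{it4} effectively, by induction on $k$: the case $k=2$ is Proposition \ref{ip5}, and the inductive step combines the finite Milliken theorem (Corollary \ref{tmilfinite}) with the relativized numbers $\rel(b,\eta,\ee)$ of Definition \ref{3d9} — one stabilizes the coloring of $\strong_2(T)$ given by the conditional probabilities $\mu\big(\bigcap_{t\in F(1)}A_t \ | \ A_{F(0)}\big)$, conditions on the root event, and applies the inductive hypothesis inside the conditioned space, which produces the recursion $1+(2^b-1)p(b,k)=p(b,k+1)$ and explicit (primitive recursive) upper bounds for $\mathrm{Cor}(b,k,\theta,\ee)$. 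You instead deduce the finite theorem from Theorem \ref{it1} by a correspondence-principle limit. Note that the paper explicitly asserts that Theorem \ref{it4} ``does not follow from Theorem \ref{it1} via compactness''; as far as I can tell, your limit argument does go through once written carefully, so at minimum their point should be taken as follows: such a proof is intrinsically ineffective and gives no estimate whatsoever for $N=\mathrm{Cor}(b,k,\theta,\ee)$, which is precisely what the paper's argument is designed to deliver. On the measure-theoretic step you flag as the main obstacle, the clean way is not Carath\'{e}odory plus inclusion--exclusion by hand: push the indicator process $(\mathbf{1}_{A^N_t})_{t}$ forward to the compact metric space $\{0,1\}^{b^{<\nn}}$ and take a weak-star (or ultrafilter) limit of the resulting laws; cylinder sets are clopen, so every joint probability converges, and positivity is automatic. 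The combinatorial transfers you invoke are correct and do need to be said: height-$k$ strong subtrees of $b^{<N}$ are exactly the height-$k$ strong subtrees of $b^{<\nn}$ contained in its first $N$ levels, the strong subtree relation is transitive, and the counterexample trees must first be normalized to $b^{<N}$ via truncation and the canonical isomorphisms of \S 2.5.

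There is, however, a concrete error in your final step. The chain $\nu(R)\mik(\theta')^{p(b,k)}<\theta^{p(b,k)}\mik\mu\big(\bigcap_{t\in R}A_t\big)$ with $\theta<\theta'$ is impossible: since $p(b,k)\meg 1$ and $\theta<\theta'$ we have $(\theta')^{p(b,k)}>\theta^{p(b,k)}$, and moreover $\nu(R)$ and $\mu\big(\bigcap_{t\in R}A_t\big)$ are the same number by construction, so as written this is not a contradiction with your hypothesis but an internally inconsistent display. The roles of the two constants must be swapped: assume the finite statement fails for the given target $\theta$, pass to the limit to get $\mu\big(\bigcap_{t\in R}A_t\big)\mik\theta^{p(b,k)}$ for every $R\in\strong_k(b^{<\nn})$ (losing strictness here is harmless), and then apply Theorem \ref{it1} with any $\theta'$ satisfying $\theta<\theta'<\ee$ to produce some $R\in\strong_k(S)\subseteq\strong_k(b^{<\nn})$ with $\mu\big(\bigcap_{t\in R}A_t\big)\meg(\theta')^{p(b,k)}>\theta^{p(b,k)}$, the desired contradiction. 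With that fix and the limit construction carried out as above, your argument is sound, but it is a non-effective alternative to, not a reconstruction of, the paper's proof.
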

The least integer $N$ with the property described in Theorem \ref{it4} will be denoted by $\mathrm{Cor}(b,k,\theta,\ee)$.
It is interesting to point out that Theorem \ref{it4} does not follow from Theorem \ref{it1} via compactness and one has to
appropriately convert the arguments to the finite setting. An advantage of having an effective proof is that we can extract
explicit and reasonable upper bounds for the integers $\mathrm{Cor}(b,k,\theta,\ee)$; see, for instance, Proposition
\ref{ip5} below.

\subsection{Outline of the proofs}

As we have already mentioned, the proofs of Theorem \ref{it1} and Theorem \ref{it4} are conceptually similar. The main goal
is to construct a strong subtree $W$ of $T$ (which is either infinite, or of sufficiently large height) for which we can control
the joint probability of the events over all \textit{initial} subtrees of $W$. Once this is done, both Theorem \ref{it1} and
Theorem \ref{it4} follow by an application of Milliken's Theorem. The desired strong subtree $W$ is constructed recursively
using the following detailed version of the case ``$k=2$" of Theorem \ref{it4} and which is the basic pigeon-hole principle
in the ``one-step extension" of the recursive selection.
\begin{prop} \label{ip5}
There exists a primitive recursive function $\Phi:\nn^2\to\nn$ such that for every integer $b\meg 2$ and every pair of reals
$0<\theta <\ee\mik 1$ the following holds. If $T$ is a finite homogeneous tree with branching number $b$ such that
\begin{equation} \label{ie7}
h(T)\meg \Phi\Big(b, \Big\lceil \frac{2^b-1}{\ee^{2^b}-\theta^{2^b}}\Big\rceil \Big)
\end{equation}
and $\{A_t:t\in T\}$ is a family of measurable events in a probability space $(\Omega,\Sigma,\mu)$ satisfying $\mu(A_t)\meg \ee$
for every $t\in T$, then there exists a strong subtree $S$ of $T$ of height $2$ such that
\begin{equation} \label{ie8}
\mu\Big( \bigcap_{t\in S} A_t\Big) \meg \theta^{2^b}.
\end{equation}
In particular,
\begin{equation} \label{ie9}
\mathrm{Cor}(b,2,\theta,\ee)\mik \Phi\Big(b, \Big\lceil \frac{2^b-1}{\ee^{2^b}-\theta^{2^b}}\Big\rceil \Big).
\end{equation}
\end{prop}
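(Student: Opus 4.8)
The plan is to reduce Proposition~\ref{ip5} to an iterated application of the elementary pigeon-hole principle for measurable events recalled at the beginning of the introduction, in the following conditional and quantitative form: if $B$ is an event with $\mu(B)\meg\beta>0$, if $A_1,\dots,A_N$ are events with $\mu(A_i\mid B)\meg\delta$ for every $i$, and if $N\meg(1-\eta^2)/(\delta^2-\eta^2)$, then there exist $i\neq j$ with $\mu(A_i\cap A_j\cap B)\meg\eta^2\beta$. This follows, exactly as in the unconditional case, from Cauchy--Schwarz applied to $\int_B\big(\sum_{i=1}^N\mathbf 1_{A_i}\big)^2\,d\mu$ together with the trivial bound $\sum_i\mu(A_i\cap B)\mik N\mu(B)$ on the diagonal. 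Now a strong subtree of $T$ of height $2$ is a root $s$ together with, for each of the $b$ immediate successors $u_1,\dots,u_b$ of $s$, a single node $t_j\meg u_j$, the nodes $t_1,\dots,t_b$ lying on a common level of $T$. So the natural strategy is to ``handle'' the immediate successors $u_1,\dots,u_b$ one at a time, squaring at each step the exponent that controls the joint probability of the events selected so far. After $b$ steps the exponent has grown from $1$ to $2^b$, and the second argument of $\Phi$ in~\eqref{ie7} encodes exactly this bookkeeping: rescaled to the terminal exponent $2^b$, the $b$ pigeon-hole steps together demand of order $2^0+2^1+\cdots+2^{b-1}=2^b-1$ ``copies'' against the available gap $\ee^{2^b}-\theta^{2^b}$, whence the resource count $m:=\big\lceil(2^b-1)/(\ee^{2^b}-\theta^{2^b})\big\rceil$.

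In detail, I would fix a strictly decreasing sequence $\ee=\eta_0>\eta_1>\cdots>\eta_b=\theta$ and a correspondingly spaced strictly increasing sequence of levels of $T$, the gap between the $(i-1)$-st and $i$-th chosen levels being dictated by the quantity $(1-\eta_i^{2^i})/(\eta_{i-1}^{2^i}-\eta_i^{2^i})$ arising from the pigeon-hole above: since a homogeneous tree with branching number $b$ has exactly $b^k$ nodes $k$ levels above any given node, a gap of $\lceil\log_b m\rceil+1$ levels always suffices, so the total height consumed is an elementary (in particular primitive recursive) function of $b$ and of $m$, and this is what we define $\Phi(b,m)$ to be. I would then construct recursively, for $r=0,1,\dots,b$, a node $s_r$ of $T$ and nodes $t_1,\dots,t_r$ of $T$ on a common level of $T$, with $t_j$ above the $j$-th immediate successor of $s_r$ for $j\mik r$, such that $\mu\big(A_{s_r}\cap\bigcap_{j\mik r}A_{t_j}\big)\meg\eta_r^{2^r}$; the base case $r=0$ is immediate since $\mu(A_t)\meg\ee>\theta$ for every node $t$. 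Taking $r=b$ then yields a strong subtree $S=\{s_b;t_1,\dots,t_b\}$ of $T$ of height $2$ with $\mu\big(\bigcap_{t\in S}A_t\big)\meg\eta_b^{2^b}=\theta^{2^b}$, which is~\eqref{ie8}, and the bound~\eqref{ie9} on $\mathrm{Cor}(b,2,\theta,\ee)$ is then immediate from the definition of $\mathrm{Cor}$.

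The delicate point --- which I expect to be the main obstacle --- is the passage from stage $r$ to stage $r+1$. If one writes $E=A_{s_r}\cap\bigcap_{j\mik r}A_{t_j}$ and tries to introduce $t_{r+1}$ by applying the conditional pigeon-hole to the family $\{A_t\}$ for $t$ ranging over a high level of the subtree rooted at the $(r+1)$-st immediate successor of $s_r$, one needs a uniform lower bound for $\mu(A_t\mid E)$; but conditioning on $E$ can annihilate the density bound $\mu(A_t)\meg\ee$ altogether (the events $A_t$ could all be disjoint from $E$). The way around this is not to commit to the root of the strong subtree prematurely: instead of carrying a single partial configuration one carries, at each stage, enough residual freedom --- a whole level's worth of candidate nodes above the unselected immediate successors, together with the as-yet-unused lower part of $T$ --- so that the next pigeon-hole step is run against the genuinely $\ee$-dense family $\{A_t\}$ itself, and its outputs are then reorganised into a legitimate partial strong subtree at the next stage. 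It is precisely here that the tallness of $T$, i.e.\ the spacing enforced by $\Phi$, provides the room required. Making this reorganisation precise, and verifying that the bookkeeping of the $\eta_r$'s and of the level-gaps telescopes to the displayed bound while keeping all constants primitive recursive, is the technical core of the proof.
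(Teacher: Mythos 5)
Your high-level bookkeeping (doubling the exponent $b$ times, ending at $2^b$, with a decreasing ladder $\ee=\eta_0>\dots>\eta_b=\theta$) matches the structure of the paper's argument, and you have correctly identified the central obstacle: conditioning on a partial configuration $E=A_{s_r}\cap\bigcap_{j\mik r}A_{t_j}$ destroys the density hypothesis, so the pigeon-hole cannot simply be rerun relative to $E$. But your proposal stops exactly there. The sentence about ``carrying a whole level's worth of candidate nodes'' and then ``reorganising'' the pigeon-hole outputs into a partial strong subtree is not a proof step; it is a restatement of the difficulty, and you yourself defer it as ``the technical core.'' That core requires two ideas that are absent from your sketch. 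First, the inductive hypothesis must be made \emph{universal}: one needs a subtree $R_i$ such that \emph{every} strong subtree of $R_i$ of height $2$ has its partial restriction (root plus the first $i$ branch directions) of joint measure at least $\ee_i^{2^i}$; in the paper this upgrade from ``there exists a good configuration'' to ``all configurations are good'' is obtained by applying the finite Milliken theorem to the set $\mathcal{F}$ of good height-$2$ subtrees at each stage. Second, one needs a combinatorial family on which to run the pigeon-hole without conditioning: the paper's \emph{generalized Shelah lines}, whose components $\mathcal{L}_k$ are each the $\{0,\dots,i-1\}$-restriction of some height-$2$ strong subtree (hence each event $A_k=\bigcap_{t\in\mathcal{L}_k}A_t$ already has measure $\meg\ee_i^{2^i}$ by the universal hypothesis), and with the key geometric property (Proposition \ref{line}) that the union of any two distinct components contains the $\{0,\dots,i\}$-restriction of a \emph{single} height-$2$ strong subtree. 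The unconditional pigeon-hole applied to the events $A_k$ then yields two components whose intersection is large, and the merging property converts this pair into one configuration with one more branch direction attached. Without a mechanism of this kind, your recursion cannot advance past $r=1$: after the first step you only control measures of single events, not of the intersection-events to which the next pigeon-hole must be applied.

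A secondary point: your quantitative claim that a gap of $\lceil\log_b m\rceil+1$ levels per stage suffices is unsubstantiated and is tied to the naive (conditioned) pigeon-hole that you rightly reject. In the paper the height consumed at each stage is a Milliken number $\mil(b,\cdot,2,2)$, and the resulting $\Phi$ lies in the class $\mathcal{E}^8$ of Grzegorczyk's hierarchy (the paper remarks that a more careful argument avoiding Milliken still needs height of order $m^{(m+1)^b}$). Since the proposition only demands a primitive recursive $\Phi$, a smaller bound would of course be acceptable if proved, but as it stands your estimate is an artifact of the incomplete inductive step rather than a consequence of any argument.
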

Proposition \ref{ip5} will be proved in \S 3. The basic ingredient of its proof is an appropriate generalization of the notion of
a ``Shelah line", a fundamental tool in Ramsey Theory introduced by S. Shelah in his work \cite{Sh} on the van der Waerden and the
Hales--Jewett numbers. We call these new combinatorial objects \textit{generalized Shelah lines}.

The proof of Theorem \ref{it3} is somewhat different. In particular, in this case the desired strong subtree $S$ is constructed recursively
and \textit{directly}. The ``one-step extension" of the recursive selection is achieved using the following result.
\begin{prop} \label{ip6}
Let $T$ be a homogeneous tree. Also let $\{A_t:t\in T\}$ be a family of measurable events in a probability space $(\Omega,\Sigma,\mu)$
satisfying $\mu(A_t)\meg\ee>0$ for every $t\in T$. Then for every $0<\theta<\ee$ there exists a strong subtree $S$ of $T$ of infinite
height such that for every $s,t\in S$ we have $\mu (A_s \cap A_t) \meg \theta^2$.
\end{prop}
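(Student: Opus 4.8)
The plan is to marry the exact ``pair'' pigeonhole coming from Cauchy--Schwarz with Milliken's theorem, exploiting that any two nodes of a homogeneous tree can be placed, in a controlled way, inside a strong subtree of height $3$. The only quantitative input is the elementary estimate underlying the introduction: if $B_1,\dots,B_N$ are events with $\mu(B_i)\meg\ee$, then, $\mu$ being a probability measure, $\sum_{i,j}\mu(B_i\cap B_j)=\int\big(\sum_i\mathbf 1_{B_i}\big)^2\,d\mu\meg\big(\sum_i\mu(B_i)\big)^2\meg N^2\ee^2$, so $\max_{i\neq j}\mu(B_i\cap B_j)\meg(N^2\ee^2-N)/(N^2-N)\to\ee^2$. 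Thus, fixing $\alpha$ with $\theta^2<\alpha<\ee^2$, once $N$ is large enough \emph{every} family of $N$ events of measure $\meg\ee$ contains a pair of joint measure $\meg\alpha$; equivalently, no infinite family of events of measure $\meg\ee$ can have all pairwise intersections of measure $<\alpha$. Everything below is about forcing such a ``good pair'' to lie inside a prescribed strong subtree.

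Fix an ordering of the immediate successors of each node of $T$, so each node acquires positions $1,\dots,b$. Colour every strong subtree $V$ of $T$ of height $3$ by the complete record of which pairs of nodes of $V$ have joint measure $\meg\alpha$; this is a finite colouring of the height-$3$ strong subtrees of $T$, so by Milliken's theorem there is a strong subtree $S$ of $T$ of infinite height on which this colour is constant. (Here $S$ is again homogeneous with branching number $b$, and every height-$3$ strong subtree of $S$ is one of $T$.) The structural point is that, within $S$, the status ``$\mu(A_s\cap A_t)\meg\alpha$'' depends only on a bounded amount of combinatorial data: if $s<t$, the pair $\{s,t\}$ occurs as a root-to-child pair of some height-$3$ strong subtree of $S$, with child-position equal to the position at $s$ of the first step towards $t$; if $s,t$ are incomparable, then $s\wedge t$ lies in $S$ and $\{s,t\}$ occurs as a child-to-child or child-to-grandchild pair of some height-$3$ strong subtree of $S$, the relevant positions being read off at $s\wedge t$ and, when $s,t$ lie at different levels of $S$, at the node at which the higher of the two crosses the level of the lower. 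Monochromaticity therefore assigns a single truth value to each of these finitely many ``types''.

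It remains to rule out a type with value ``$<\alpha$''. For a comparable type, walk down the branch of $S$ that always steps into the relevant fixed position: every pair among its infinitely many nodes realises that type, so such a value would contradict the pigeonhole above. For an incomparable type, build inside $S$ an arbitrarily large antichain all of whose pairs realise it, by means of a \emph{comb}: a long descending spine together with nodes branching off it, arranged so that any two of them meet and then split exactly according to the prescribed positions; again the value cannot be ``$<\alpha$''. Hence every pair of nodes of $S$ has joint measure $\meg\alpha>\theta^2$, and $S$ is the required strong subtree.

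I expect the combinatorial bookkeeping of the last two steps to be the main obstacle: verifying, straight from the definition of a strong subtree (meets of its nodes remain inside it; it is ``full'' below each of its nodes), that every pair of $S$ sits in the claimed low-complexity slot of some height-$3$ strong subtree of $S$; enumerating the finitely many types that arise; and, for each, engineering a witness family that stays within a single type while being as large as we please. Everything else is the one-line averaging estimate and a black-box appeal to Milliken's theorem.
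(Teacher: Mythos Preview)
Your approach is essentially the same as the paper's. The paper makes your ``types'' explicit: a doubleton $\{s,t\}$ of $S$ is of type~$\mathrm{I}$ (comparable, with parameter $p$ the branching direction at the lower node), type~$\mathrm{II}$ (incomparable at the same $S$-level, with parameters $p,q$ the two directions taken at the meet), or type~$\mathrm{III}$ (incomparable at different $S$-levels, with parameters $p,q,r$: the two directions at the meet, plus the direction the longer node takes at the level of the shorter). For the witness families the paper writes down explicit sequences realising a single prescribed type in every pair: $t_k=p^k$ for type~$\mathrm{I}$, $s_k=q^k p^{N-1-k}$ for type~$\mathrm{II}$, and $w_k=(qr)^k p$ for type~$\mathrm{III}$ --- the last is exactly your ``comb''. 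Your device of colouring height-$3$ strong subtrees by the full record of which pairs have measure $\meg\alpha$ handles all parameter values in a single application of Milliken, which is a slightly cleaner packaging than the paper's treatment of one fixed triple $(p,q,r)$ at a time.
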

The main difficulty in the proof of Proposition \ref{ip6} lies in the fact that the class of doubletons of homogeneous trees is \textit{not}
Ramsey; that is, one can find a $2$-coloring of the set of all doubletons of, say, the dyadic tree $D$ such that every strong subtree
of $D$ of height at least $2$ contains doubletons of both colors. These pathologies in Ramsey Theory for trees have been observed in the
late 1960s by F. Galvin and are reflected in his conjecture about partitions of finite subsets of the reals \cite{Ga}, settled in the affirmative
in the early 1980s by A. Blass \cite{Bl}. The key observation in Blass' work is that, for a fixed integer $n\meg 1$, the set of all $n$-element
subsets of certain trees can be categorized in a finite list of classes each of which has the Ramsey property. A similar observation
is also the driving force behind the proof of Proposition \ref{ip6}.

\subsection{Organization of the paper}

The paper is organized as follows. In \S 2 we set up our notation and terminology and we gather some background material needed in the rest
of the paper. In the next section we introduce the aforementioned notion of a generalized Shelah line and we give the proof of Proposition
\ref{ip5}. The proof of Theorem \ref{it4} is given in \S 4 while the proofs of Theorem \ref{it1} and Corollary \ref{ic2} are given in \S 5.
Finally, in \S 6 we define the class of free subsets of homogeneous trees and we give the proofs of Theorem \ref{it3} and Proposition \ref{ip6}.
To facilitate the interested reader we have also included two appendices. In Appendix A we show that Theorem \ref{it1} still holds if the
tree $T$ is merely assumed to be boundedly branching and we provide counterexamples for the case of finitely branching but \textit{not} boundedly
branching trees. In Appendix B we prove that every finite subset $F$ of a homogeneous tree $T$ is contained in a strong subtree of $T$ of height $2|F|-1$, a result needed for the proof of Corollary \ref{ic2}.


\section{Background material}

By $\nn=\{0,1,2,...\}$ we denote the natural numbers. The cardinality of a set $X$ will be denoted by $|X|$.

\subsection{Trees}

By the term \textit{tree} we mean a non-empty partially ordered set $(T,<)$ such that the set $\{s\in T: s<t\}$ is finite and linearly
ordered under $<$ for every $t\in T$. The cardinality of this set is defined to be the \textit{length} of $t$ in $T$ and will be denoted by
$\ell_T(t)$. For every $n\in\nn$ the \textit{$n$-level} of $T$, denoted by $T(n)$, is defined to be the set $\{t\in T: \ell_T(t)=n\}$.
The \textit{height} of $T$, denoted by $h(T)$, is defined as follows. If there exists $k\in\nn$ with $T(k)=\varnothing$, then we set
$h(T)=\max\{n\in\nn: T(n)\neq\varnothing\}+1$; otherwise, we set $h(T)=\infty$.

For every node $t$ of a tree $T$ the set of \textit{successors} of $t$ in $T$ is defined by
\begin{equation} \label{e21}
\suc_T(t)=\{s\in T: t\mik s\}.
\end{equation}
The set of \textit{immediate successors} of $t$ in $T$ is the subset of $\suc_T(t)$ defined by
$\immsuc_T(t)=\{s\in T: t\mik s \text{ and } \ell_T(s)=\ell_T(t)+1\}$.

A \textit{subtree} of a tree $T$ is a subset of $T$ viewed as a tree equipped with the induced partial ordering.
For every $k\in\nn$ with $k<h(T)$ we set
\begin{equation} \label{e22}
T\upharpoonright k= T(0)\cup ... \cup T(k).
\end{equation}
Notice that $h(T\upharpoonright k)=k+1$. An \textit{initial subtree} of $T$ is a subtree of $T$ of the form
$T\upharpoonright k$ for some $k\in\nn$. A \textit{chain} of $T$ is a subset $C$ of $T$ such that for every $s,t\in C$ we have
that either $s\mik t$ or $t\mik s$.

A tree $T$ is said to be \textit{pruned} (respectively, \textit{finitely branching}) if for every $t\in T$ the set of immediate
successors of $t$ in $T$ is non-empty (respectively, finite). It is said to be \textit{boundedly branching} if there exists an
integer $m\meg 1$ such that every $t\in T$ has at most $m$ immediate successors, and it is said to be \textit{balanced} if all
maximal chains of $T$ have the same cardinality. Finally, a tree $T$ is said to be \textit{uniquely rooted} if $|T(0)|=1$;
the \textit{root} of a uniquely rooted tree $T$ is defined to be the node $T(0)$.

Let $T$ be a uniquely rooted tree. For every $s,t\in T$ the \textit{infimum} of $s$ and $t$ in $T$, denoted by $s\wedge_T t$, is defined
to be the $<$-maximal node $w\in T$ such that $w\mik s$ and $w\mik t$ (notice that the infimum is well-defined since $T(0)\mik t$ for every
$t\in T$). More generally, for every non-empty subset $F$ of $T$ the \textit{infimum} of $F$ in $T$, denoted by $\wedge_T F$, is defined to
be the $<$-maximal node $w\in T$ such that $w\mik t$ for every $t\in F$. Observe that $s\wedge_T t=\wedge_T \{s,t\}$.

\subsection{Vector trees}

A \textit{vector tree} $\bfct$ is a non-empty finite sequence of trees having common height; this common height is defined to be the
\textit{height} of $\bfct$ and will be denoted by $h(\bfct)$. We notice that, throughout the paper, we will start the enumeration of
vector trees with $1$ instead of $0$.

The \textit{level product} of a vector tree $\bfct=(T_1,...,T_d)$, denoted by $\otimes\bfct$, is defined to be the set
\begin{equation} \label{e23}
\bigcup_{n< h(\bfct)} T_1(n)\times ...\times T_d(n).
\end{equation}

We say that a vector tree $\bfct=(T_1,...,T_d)$ is \textit{pruned} (respectively, \textit{finitely branching}, \textit{boundedly branching},
\textit{balanced}, \textit{uniquely rooted}) if for every $i\in\{1,...,d\}$ the tree $T_i$ is pruned (respectively, finitely branching,
boundedly branching, balanced, uniquely rooted).

\subsection{Strong subtrees and vector strong subtrees}

A subtree $S$ of a uniquely rooted tree $T$ is said to be \textit{strong} provided that: (a) $S$ is uniquely rooted and balanced, (b) every
level of $S$ is a subset of some level of $T$, and (c) for every non-maximal node $s\in S$ and every $t\in\immsuc_T(s)$ there exists
a unique node $s'\in\immsuc_S(s)$ such that $t\mik s'$. The \textit{level set} of a strong subtree $S$ of $T$ is defined to be the set
\begin{equation} \label{e24}
L_T(S)=\{ m\in\nn: \text{exists } n<h(S) \text{ with } S(n)\subseteq T(m)\}.
\end{equation}
A basic property of strong subtrees is that they preserve infima. That is, if $S$ is a strong subtree of $T$ and $F$ is a non-empty subset
of $S$, then $\wedge_S F=\wedge_T F$.

The concept of a strong subtree is naturally extended to vector trees. Specifically, a \textit{vector strong subtree} of a uniquely rooted
vector tree $\bfct=(T_1,...,T_d)$ is a vector tree $\bfcs=(S_1,...,S_d)$ such that $S_i$ is a strong subtree of $T_i$ for every $i\in\{1,...,d\}$
and $L_{T_1}(S_1)= ...= L_{T_d}(S_d)$.

\subsection{Homogeneous trees and vector homogeneous trees}

Let $b\in\nn$ with $b\meg 2$. By $b^{<\nn}$ we shall denote the set of all finite sequences having values in $\{0,...,b-1\}$.
The empty sequence is denoted by $\varnothing$ and is included in $b^{<\nn}$. We view $b^{<\nn}$ as a tree equipped with the
(strict) partial order $\sqsubset$ of end-extension. Notice that $b^{<\nn}$ is a homogeneous tree with branching number $b$.
For every $n\in\nn$ by $b^n$ we denote the $n$-level of $b^{<\nn}$. If $n\meg 1$, then $b^{<n}$ stands for the initial subtree
of $b^{<\nn}$ of height $n$. By $\lex$ we denote the usual lexicographical order on $b^n$. For every $t,s\in b^{<\nn}$ by
$t^{\con}s$, or simply by $ts$, we shall denote the concatenation of $t$ and $s$.

For technical reasons, that will become transparent below, we will not work with abstract homogeneous trees but with a concrete
subclass. Observe that all homogeneous trees with the same branching number are pairwise isomorphic, and so, such a restriction
will have no effect in the generality of our results.
\medskip

\noindent \textbf{Convention.} \textit{In the rest of the paper by the term ``homogeneous tree" (respectively, ``finite homogeneous tree")
we will always mean a strong subtree of $b^{<\nn}$ of infinite (respectively, finite) height for some integer $b\meg 2$. For every, possibly
finite, homogeneous tree $T$ by $b_T$ we shall denote the branching number of $T$. We follow the same convention for vector trees. In particular,
by the term ``vector homogeneous tree" we will mean a vector strong subtree of $(b_1^{<\nn},...,b_d^{<\nn})$ of infinite height for some
integers $b_1,...,b_d$ with $b_i\meg 2$ for every $i\in\{1,...,d\}$.}
\medskip

\noindent The above convention has two basic advantages. Firstly, it enables us to effectively enumerate the set of immediate successors
of a given node of a, possibly finite, homogeneous tree $T$. Specifically, for every $t\in T$ and every $p\in\{0,...,b_T-1\}$ let
\begin{equation} \label{e25}
t^{\con_T}\!p= \immsuc_T(t)\cap \suc_{b_T^{<\nn}}(t^{\con}p)
\end{equation}
and notice that
\begin{equation} \label{e26}
\immsuc_T(t)=\big\{ t^{\con_T}\!p: p\in\{0,...,b_T-1\}\big\}.
\end{equation}
Also observe that for every $p,q\in\{0,...,b_T-1\}$ we have $t^{\con_T}\!p \lex t^{\con_T}\!q$ if and only if $p<q$.

Secondly, under the above convention, the infimum operation has a particularly simple description. Namely, the infimum of a non-empty
subset $F$ of a, possibly finite, homogeneous tree $T$ is the maximal common initial subsequence of every finite sequence in $F$.
Having this representation in mind, we will drop the subscript in the infinmum operation and we will denote it simply by $\wedge$.

\subsection{Canonical embeddings and canonical isomorphisms}

Let $T$ and $S$ be two, possibly finite, homogeneous trees with the same branching number. We say that a map $f:T\to S$ is a
\textit{canonical embedding} if for every $t,t'\in T$ the following conditions are satisfied.
\begin{enumerate}
\item[(a)] We have $\ell_T(t)=\ell_T(t')$ if and only if $\ell_S\big(f(t)\big)=\ell_S\big(f(t')\big)$.
\item[(b)] We have $t\sqsubset t'$ if and only if $f(t)\sqsubset f(t')$.
\item[(c)] If $\ell_T(t)=\ell_T(t')$, then $t\lex t'$ if and only if $f(t)\lex f(t')$.
\item[(d)] We have $f(t\wedge t')=f(t)\wedge f(t')$.
\end{enumerate}
Observe that a canonical embedding $f:T \to S$ is an injection and its image $f(T)$ is a strong subtree of $S$. Also notice that
if $S$ and $T$ have the same height, then there exists a \textit{unique} bijection between $T$ and $S$ satisfying the above conditions.
This unique bijection will be called the \textit{canonical isomorphism} between $T$ and $S$ and will be denoted by $\ci(T,S)$.

\subsection{Milliken's Theorem}

Let $T$ be a, possibly finite, homogeneous tree. For every integer $k\meg 1$ by $\strong_k(T)$ we shall denote the set
of all strong subtrees of $T$ of height $k$ while by $\mathrm{Str}_{\infty}(T)$ we shall denote the set of all strong subtrees of $T$
of infinite height. For every vector homogeneous tree $\bfct=(T_1,...,T_d)$ the sets $\strong_k(\bfct)$ and $\strong_{\infty}(\bfct)$
are analogously defined. It is easy to see that $\strong_{\infty}(\bfct)$ is a $G_\delta$ (hence Polish) subspace of
$2^{T_1}\times...\times 2^{ T_d}$. We will need the following result due to K. Milliken.
\begin{thm}[\cite{Mi2}] \label{t22}
Let $\bfct$ be a vector homogeneous tree. Then for every Borel subset $\mathcal{C}$ of $\strong_{\infty}(\bfct)$ there exists a vector
strong subtree $\bfcs$ of $\bfct$ of infinite height such that either $\strong_{\infty}(\bfcs)\subseteq \mathcal{C}$ or
$\strong_{\infty}(\bfcs)\cap\mathcal{C}=\varnothing$.

In particular, for every integer $k\meg 1$ and every subset $\mathcal{F}$ of $\strong_k(\bfct)$ there exists a vector strong subtree
$\bfcr$ of $\bfct$ of infinite height such that either $\strong_k(\bfcr)\subseteq \mathcal{F}$ or
$\strong_k(\bfcr)\cap\mathcal{F}=\varnothing$.
\end{thm}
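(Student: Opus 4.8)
The plan is to obtain Theorem \ref{t22} by bootstrapping from finite, purely combinatorial statements via a Galvin--Prikry type ``combinatorial forcing'' argument. The foundation is the Halpern--L\"{a}uchli theorem, which I would state directly for vector homogeneous trees: for every vector homogeneous tree $\bfct=(T_1,\dots,T_d)$ and every colouring of the level product $\otimes\bfct$ with finitely many colours there is a vector strong subtree $\bfcs$ of $\bfct$ of infinite height such that $\otimes\bfcs$ is monochromatic. I regard the proof of this theorem as the main obstacle: it is the genuinely hard combinatorial input, established by a delicate simultaneous induction that builds, level by level, a ``dense'' system of finite configurations inside $\bfct$ (alternatively via a Hales--Jewett flavoured argument over a suitable finite alphabet). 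Since the statement already quantifies over $d$ trees at once, no extra work is needed to pass from the single-tree case to the vector case. From Halpern--L\"{a}uchli I would deduce, by induction on $k$, the \emph{finite Milliken theorem}: for every $k\meg 1$ and every finite colouring of $\strong_k(\bfct)$ there is a vector strong subtree of $\bfct$ of infinite height whose height-$k$ strong subtrees all receive the same colour. The inductive step recolours the ``top level'' of a prospective height-$k$ strong subtree, turning the problem into a colouring of level products to which Halpern--L\"{a}uchli applies, and then intersects the resulting subtree with the one supplied by the inductive hypothesis for height $k-1$.

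With these finite tools in hand, the next step proves the theorem for an \emph{open} set $\mathcal{C}\subseteq\strong_\infty(\bfct)$. I would work with \emph{conditions} $(\bfca,\mathbf{X})$, where $\bfca$ is a finite vector strong subtree of $\bfct$, $\mathbf{X}\in\strong_\infty(\bfct)$, and $\bfca$ is an initial segment of $\mathbf{X}$; an extension of $(\bfca,\mathbf{X})$ keeps $\bfca$ or enlarges it by finitely many new levels taken from $\mathbf{X}$, while simultaneously thinning $\mathbf{X}$. Following Galvin--Prikry, say that $(\bfca,\mathbf{X})$ \emph{accepts} $\mathcal{C}$ if every $\mathbf{Y}\in\strong_\infty(\mathbf{X})$ having $\bfca$ as an initial segment lies in $\mathcal{C}$, and \emph{rejects} $\mathcal{C}$ if no extension of $(\bfca,\mathbf{X})$ accepts. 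Two facts are needed: (i) every condition has an extension that accepts or rejects (immediate from the definition of rejection); and (ii) if $(\bfca,\mathbf{X})$ rejects $\mathcal{C}$ then there is a thinning $\mathbf{X}'$ of $\mathbf{X}$, keeping $\bfca$ fixed, such that every one-step strong extension $\bfca'$ of $\bfca$ inside $\mathbf{X}'$ yields a condition $(\bfca',\mathbf{X}')$ that again rejects $\mathcal{C}$. Fact (ii) is exactly where the finite Milliken theorem enters, applied to the two-colouring of the relevant finite strong subtrees according to whether the corresponding condition accepts, followed by a fusion over the infinitely many admissible new levels. Running (i) and (ii) along a fusion sequence produces $\bfcs\in\strong_\infty(\bfct)$ for which either some finite initial segment accepts $\mathcal{C}$, whence $\strong_\infty(\bfcs)\subseteq\mathcal{C}$, or every finite approximation inside $\bfcs$ rejects $\mathcal{C}$; in the second case openness of $\mathcal{C}$ forces $\strong_\infty(\bfcs)\cap\mathcal{C}=\varnothing$, since a point of the intersection would possess a basic clopen neighbourhood inside $\mathcal{C}$, i.e.\ an accepting finite initial segment.

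It remains to pass from open to Borel. Call $\mathcal{C}$ \emph{completely Ramsey} if for every condition $(\bfca,\mathbf{X})$ there is a thinning $\mathbf{X}'$, keeping $\bfca$, such that either every $\mathbf{Y}\in\strong_\infty(\mathbf{X}')$ with $\bfca$ as an initial segment lies in $\mathcal{C}$, or none does. The class of completely Ramsey sets is trivially closed under complementation and, by a Nash--Williams/Galvin--Prikry diagonalisation (given $\mathcal{C}=\bigcup_n\mathcal{C}_n$ and a condition, recursively thin so that the $n$-th finite approximation decides $\mathcal{C}_n$, and then fuse), it is closed under countable unions; by the previous step it contains every open set, hence every Borel set. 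Applying this to the given Borel set $\mathcal{C}$ and to the trivial condition yields the required vector strong subtree $\bfcs$ with $\strong_\infty(\bfcs)\subseteq\mathcal{C}$ or $\strong_\infty(\bfcs)\cap\mathcal{C}=\varnothing$. (Alternatively one verifies that $(\strong_\infty(\bfct),\leq)$ satisfies Todorcevic's axioms for a topological Ramsey space and invokes the abstract Ellentuck theorem.) Finally, the ``in particular'' clause is the special case $\mathcal{C}=\{\mathbf{X}\in\strong_\infty(\bfct):\mathbf{X}\upharpoonright(k-1)\in\mathcal{F}\}$, which is clopen; here $\mathbf{X}\upharpoonright(k-1)$ denotes the initial strong subtree of $\mathbf{X}$ of height $k$, so that $\strong_\infty(\bfcr)\subseteq\mathcal{C}$ gives $\strong_k(\bfcr)\subseteq\mathcal{F}$ and $\strong_\infty(\bfcr)\cap\mathcal{C}=\varnothing$ gives $\strong_k(\bfcr)\cap\mathcal{F}=\varnothing$.
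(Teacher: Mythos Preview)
The paper does not prove Theorem~\ref{t22}: it is quoted from Milliken~\cite{Mi2} as a background result and used as a black box throughout (see also the reference to Todorcevic's monograph~\cite{To}). There is therefore no ``paper's own proof'' to compare your proposal against.

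That said, your outline is the standard route to Milliken's theorem and is essentially correct as a plan: Halpern--L\"{a}uchli as the pigeonhole, an induction on $k$ to obtain the finite-height partition theorem, a Galvin--Prikry style combinatorial-forcing/fusion argument for open sets, and then closure of the completely Ramsey sets under complements and countable unions (or, equivalently, verification of the axioms of a topological Ramsey space and an appeal to the abstract Ellentuck theorem). Two places would need more care in a full write-up. First, the inductive step from $\strong_{k-1}$ to $\strong_k$ is not literally a colouring of the level product $\otimes\bfct$; one fixes a root tuple, works inside the vector of successor trees, and applies the \emph{strong} form of Halpern--L\"{a}uchli (dense-level-set version) together with a fusion to handle all possible roots simultaneously. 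Second, the closure of completely Ramsey sets under countable unions is more delicate than a single diagonalisation: one must interleave the open-set argument with the fusion so that at each stage every finite approximation already decides the appropriate $\mathcal{C}_n$; this is where the ``completely'' in ``completely Ramsey'' does real work. Your derivation of the ``in particular'' clause from the clopen case is correct.
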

By Theorem \ref{t22} and a standard compactness argument, we get the following.
\begin{cor} \label{tmilfinite}
For every integer $b\meg 2$, every pair of integers $m\meg k\meg 1$ and every integer $r\meg 2$ there exists an integer $M$
with the following property. For every finite homogeneous tree $T$ with branching number $b$ and of height at least $M$
and every $r$-coloring of the set $\strong_k(T)$ there exists a strong subtree $S$ of $T$ of height $m$ such that
the set $\strong_k(S)$ is monochromatic. The least integer $M$ with this property will be denoted by $\mil(b,m,k,r)$.
\end{cor}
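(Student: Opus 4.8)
The plan is to deduce this finite statement from Theorem \ref{t22} (in the single-tree case $d=1$) by a standard compactness argument, arguing by contradiction. First I would reduce to the concrete tree $b^{<M}$. Indeed, if $T$ is a finite homogeneous tree with branching number $b$ and $h(T)\meg M$, then its initial subtree $T\upharpoonright(M-1)$ is a strong subtree of $T$ of height $M$, hence canonically isomorphic to $b^{<M}$; since the canonical isomorphism $\ci\big(T\upharpoonright(M-1),b^{<M}\big)$ preserves levels, the partial order and infima, it carries strong subtrees to strong subtrees in both directions. Therefore any $r$-coloring of $\strong_k(T)$, restricted to $\strong_k(T\upharpoonright(M-1))$ and transported via this isomorphism, yields an $r$-coloring of $\strong_k(b^{<M})$, and a height-$m$ strong subtree $S'$ of $b^{<M}$ with $\strong_k(S')$ monochromatic pulls back to a height-$m$ strong subtree of $T$ with the same property. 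So it suffices to produce an $M$ that works for $T=b^{<M}$.

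Suppose no such $M$ exists. Then for each $N\meg 1$ we fix an $r$-coloring $c_N\colon\strong_k(b^{<N})\to\{1,\dots,r\}$ for which no strong subtree $S$ of $b^{<N}$ of height $m$ has $\strong_k(S)$ monochromatic. The set $\strong_k(b^{<\nn})$ is countable; extending each $c_N$ arbitrarily to a function on all of $\strong_k(b^{<\nn})$ and using compactness of the product space $\{1,\dots,r\}^{\strong_k(b^{<\nn})}$ (equivalently, a diagonal argument along an enumeration of $\strong_k(b^{<\nn})$), we obtain an infinite set $L\subseteq\nn$ and a coloring $c\colon\strong_k(b^{<\nn})\to\{1,\dots,r\}$ such that $c_N$ agrees with $c$ on any prescribed finite subset of $\strong_k(b^{<\nn})$ for all sufficiently large $N\in L$.

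Now I would apply Milliken's Theorem. Iterating the two-colour case of Theorem \ref{t22} at most $r-1$ times, there is an infinite strong subtree $R$ of $b^{<\nn}$ and an $i\in\{1,\dots,r\}$ such that $c(U)=i$ for every $U\in\strong_k(R)$. Set $R_0=R\upharpoonright(m-1)$, which is a strong subtree of $b^{<\nn}$ of height $m$, and let $\mathcal{G}=\strong_k(R_0)$; this is a \emph{finite} subset of $\strong_k(b^{<\nn})$ on which $c$ is constantly $i$. Let $n_0$ be larger than the length of every node of $R_0$, so that $R_0$ is a strong subtree of $b^{<n_0}$, and choose $N\in L$ with $N\meg n_0$ large enough that $c_N$ also agrees with $c$ on $\mathcal{G}$. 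Then $R_0$ is a strong subtree of $b^{<N}$ of height $m$ and $\strong_k(R_0)=\mathcal{G}$ is $c_N$-monochromatic, contradicting the choice of $c_N$. Hence a suitable $M$ exists, and $\mil(b,m,k,r)$ is by definition the least one.

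The argument is entirely routine; the only points requiring a little care are the opening reduction to $b^{<M}$ (needed so that the approximating colorings $c_N$ live on an increasing chain of domains inside the fixed countable set $\strong_k(b^{<\nn})$) and the observation that the finite initial segment $R\upharpoonright(m-1)$ of the infinite strong subtree furnished by Theorem \ref{t22} actually sits inside $b^{<N}$ for the chosen $N\in L$ — both dealt with by taking $N$ large. Since the proof is non-effective, it yields no bound on $\mil(b,m,k,r)$, in contrast with the explicit estimates pursued elsewhere in the paper.
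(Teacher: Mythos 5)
Your argument is correct and is exactly the paper's route: the paper derives Corollary \ref{tmilfinite} from Theorem \ref{t22} by precisely such a ``standard compactness argument'' (stated there without details), and your write-up simply fills in the routine reduction to $b^{<M}$, the diagonal/compactness extraction of a limit coloring, and the transfer of a monochromatic finite initial segment back to some $b^{<N}$. Nothing further is needed.
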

Notice that the reduction of Corollary \ref{tmilfinite} to Theorem \ref{t22} via compactness is non-effective and gives no
estimate for the numbers $\mil(b,m,k,r)$. An analysis of the finite version of Milliken's Theorem has been carried out by
M. Soki\'{c} yielding explicit and reasonable upper bounds. In particular, we have the following.
\begin{thm}[\cite{So}] \label{Sokic}
For every integer $k\meg 1$ there exists a primitive recursive function $\phi_k:\nn^3\to \nn$ belonging
to the class $\mathcal{E}^{5+k}$ of Grzegorczyk's hierarchy such that for every integer $b\meg 2$, every integer $m\meg k$
and every integer $r\meg 2$ we have
\begin{equation} \label{e27}
\mil(b,m,k,r) \mik \phi_k(b,m,r).
\end{equation}
\end{thm}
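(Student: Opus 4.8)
The plan is to prove slightly more than stated, namely an effective finite Milliken theorem for \emph{vector} homogeneous trees, the displayed bound on $\mil(b,m,k,r)$ being the one-tree ($d=1$) specialization; the vector version is genuinely needed because in the inductive step one ends up coloring a \emph{product} of trees and must extract a vector (i.e.\ coordinatewise) strong subtree, which is strictly more than a strong subtree of the product tree. I would argue by an outer induction on $k$. The base case $k=1$ is the hard kernel --- it packages an effective finite form of the Halpern--L\"{a}uchli theorem \cite{HL} --- while the step from $k$ to $k+1$ promotes an $r$-coloring of height-$(k+1)$ strong subtrees to one of height-$k$ strong subtrees at the cost of exactly one level of Grzegorczyk's hierarchy, which reproduces the shape $\mathcal{E}^{5+k}$.

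For $k=1$ one colors the nodes of a finite vector homogeneous tree (equivalently, the elements of its level product $\otimes\bfct$) with $r$ colors and seeks a monochromatic vector strong subtree of height $m$. I would build this strong subtree level by level: having fixed its top $j$ levels, its current leaves span a product of finitely many subtrees of $\bfct$, and one must descend to a single common level of $\bfct$ and choose one node in each of those subtrees so that the resulting new level is constant for the coloring. This one-step selection is a pigeon-hole over a finite product of trees, and I would run it as a Hales--Jewett-type argument in the spirit of Shelah \cite{Sh}: a descent of $\ell$ levels through the product of these subtrees is coded by a word over a finite alphabet whose size is a fixed function of $b$, $j$ and $\ell$, so that the $r$-coloring of the product becomes an $r$-coloring of that word space, and a combinatorial line \cite{HJ} supplies the constant new level. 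Iterating this $m$ times --- with the descent lengths, hence the ambient height consumed at each stage, fixed in advance by a backward pass --- yields the desired monochromatic strong subtree of height $m$. Since Shelah's bounds for the Hales--Jewett numbers are primitive recursive and lie in $\mathcal{E}^5$, a careful (if tedious) accounting of how they propagate through this $m$-fold iteration places the resulting primitive recursive bound, a function of $(b,m,r)$, in $\mathcal{E}^6 = \mathcal{E}^{5+1}$.

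For the step from $k$ to $k+1$ I would use that a height-$(k+1)$ strong subtree $R$ of a homogeneous tree is recovered from its top $k$ levels --- a height-$k$ strong subtree $R'$ --- together with its bottom level, which is a choice of one node in each of the finitely many subtrees hanging below the leaves of $R'$, all at a common deeper level. Fixing $R'$, the restriction of the given $r$-coloring of $\strong_{k+1}$ to the $R$ that extend $R'$ is therefore an $r$-coloring of a finite product of trees, i.e.\ an instance of the vector case $k=1$; applying that case thins the subtrees below the leaves of $R'$ so that the bottom level becomes irrelevant to the color. Carrying this out compatibly over all admissible $R'$ --- by iterating the vector case $k=1$ a primitive-recursively bounded number of times --- leaves an $r$-coloring of $\strong_k$ of the thinned tree, to which the induction hypothesis applies with a suitably inflated target height; composing the two primitive recursive bounds raises the Grzegorczyk level by one and gives the bound for $k+1$ in $\mathcal{E}^{6+k} = \mathcal{E}^{5+(k+1)}$.

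The main obstacle is the kernel at $k=1$: the usual proofs of the Halpern--L\"{a}uchli theorem are forcing arguments or infinitary fusions and give no bounds whatsoever, so one must replace them by a genuinely finite argument founded on Shelah's primitive recursive bounds for the Hales--Jewett numbers and then track, across the product-of-trees pigeon-holes and the outer level-by-level recursion, exactly how many times the Hales--Jewett function is iterated --- this counting is precisely what pins down the Grzegorczyk class. A secondary, purely bookkeeping difficulty is that both the recursion inside a fixed $k$ and the induction on $k$ must pre-compute, by a backward pass, the ambient height consumed at each stage so that the final strong subtree genuinely has height at least $m$; this is routine but must be arranged with care to keep everything primitive recursive and inside $\mathcal{E}^{5+k}$.
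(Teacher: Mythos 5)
First, a point of context: the paper does not prove this statement at all --- it is quoted from Soki\'{c}'s work \cite{So} and used as a black box --- so there is no in-paper argument to compare yours against; what you are proposing is a reconstruction of Soki\'{c}'s theorem itself. Your global architecture (outer induction on $k$, with an effective finite Halpern--L\"{a}uchli theorem as the $k=1$ kernel and one Grzegorczyk level paid per inductive step) is indeed the natural route to such bounds, so the shape of the plan is reasonable.

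However, the kernel at $k=1$, which you yourself identify as the hard part, has a genuine gap as sketched. You code a descent of $\ell$ levels by words of length $\ell$ over the alphabet $A=\{0,\ldots,b-1\}^{\mathrm{slots}}$ and assert that a monochromatic combinatorial line ``supplies the constant new level''. Trace what constancy along a line actually yields: if $x_s(v)$ denotes the node placed above slot $s$ when the substituted letter has $s$-component $v$, then monochromaticity of the line says that for each \emph{fixed} choice of one slot per coordinate, say $(s_1,\ldots,s_d)$, the color of the cross-tuple $\big(x_{s_1}(v_1),\ldots,x_{s_d}(v_d)\big)$ is independent of $(v_1,\ldots,v_d)\in\{0,\ldots,b-1\}^d$; that is, the color is constant within each fan. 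It may still depend on the slot tuple $(s_1,\ldots,s_d)$, i.e.\ on which of the current leaves the new nodes sit above. But the statement you need at each step is that \emph{all} cross-tuples of the new level receive one and the same color (and the same color across all $m$ levels). Removing this residual dependence on the slot tuple is precisely the hard content of the Halpern--L\"{a}uchli theorem; it is not achieved by a single Hales--Jewett application, and your sketch does not say how it is to be done (the classical arguments handle it by a delicate induction on the number of trees with a ``dense sets of levels'' quantifier structure, and it is exactly there that the quantitative accounting lives). A secondary unaddressed point of the same kind: for the fans to produce a vector \emph{strong} subtree, the nodes $x_s(v)$ for distinct $v$ must extend distinct immediate successors of the slot node, which requires control of where the first wildcard position sits. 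Finally, the specific claims that the base case lands in $\mathcal{E}^6$ and that each inductive step costs exactly one Grzegorczyk level are asserted rather than derived; since those claims constitute the entire content of the theorem, the proposal as it stands is a plausible programme but not yet a proof.
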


\subsection{Probabilistic preliminaries}

We recall the following well-known fact. The proof is sketched for completeness.
\begin{lem} \label{measure}
Let $0< \theta< \ee\mik 1$ and $N\in\nn$ with $N\meg (\ee^2-\theta^2)^{-1}$. Also let $(A_i)_{i=0}^{N-1}$ be a family of measurable
events in a probability space $(\Omega,\Sigma,\mu)$ satisfying $\mu(A_i)\meg \ee$ for every $i\in \{0,...,N-1\}$. Then there exist
$i,j\in \{0,...,N-1\}$ with $i\neq j$ such that $\mu(A_i\cap A_j) \meg \theta^2$.
\end{lem}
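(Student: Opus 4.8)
The plan is to prove Lemma~\ref{measure} by a standard averaging (second-moment) argument applied to the counting function $f=\sum_{i=0}^{N-1}\mathbf{1}_{A_i}$. First I would observe that $\int_\Omega f\, d\mu = \sum_{i} \mu(A_i) \meg \ee N$, so by Cauchy--Schwarz (or Jensen's inequality applied to $x\mapsto x^2$ with respect to the probability measure $\mu$) we get
\[ \int_\Omega f^2\, d\mu \meg \Big(\int_\Omega f\, d\mu\Big)^2 \meg \ee^2 N^2. \]
On the other hand, expanding the square gives
\[ \int_\Omega f^2\, d\mu = \sum_{i,j} \mu(A_i\cap A_j) = \sum_{i} \mu(A_i) + \sum_{i\neq j} \mu(A_i\cap A_j) \mik N + \sum_{i\neq j} \mu(A_i\cap A_j). \]
Combining the two displays yields $\sum_{i\neq j}\mu(A_i\cap A_j) \meg \ee^2 N^2 - N$.

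Next I would use the pigeonhole principle: the sum on the left has $N(N-1)$ terms, so some term satisfies
\[ \mu(A_i\cap A_j) \meg \frac{\ee^2 N^2 - N}{N(N-1)} = \frac{\ee^2 N - 1}{N-1}. \]
It therefore suffices to check that the hypothesis $N\meg (\ee^2-\theta^2)^{-1}$ forces $\frac{\ee^2 N-1}{N-1}\meg \theta^2$. This is an elementary inequality: clearing denominators (legitimate since $N\meg 2$, as $(\ee^2-\theta^2)^{-1}>1$), it is equivalent to $\ee^2 N - 1 \meg \theta^2 N - \theta^2$, i.e. $(\ee^2-\theta^2)N \meg 1-\theta^2$, which holds because $(\ee^2-\theta^2)N\meg 1\meg 1-\theta^2$. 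This produces the required pair $i\neq j$ with $\mu(A_i\cap A_j)\meg\theta^2$.

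There is no serious obstacle here; the only mild subtlety is making sure the pigeonhole is applied to the \emph{off-diagonal} sum (not the full double sum) so that the $N$ diagonal terms $\mu(A_i)$ are subtracted off first — otherwise the bound degrades and the arithmetic with $(\ee^2-\theta^2)^{-1}$ does not close. One could alternatively phrase the averaging directly in terms of the random variable $f$ via $\mathrm{Var}(f)\meg 0$, but the expansion above is the cleanest route and keeps all constants explicit, which is in the spirit of the paper.
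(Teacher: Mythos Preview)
Your proof is correct and essentially identical to the paper's. The paper also sets $X=\sum_i \mathbf{1}_{A_i}$ and applies convexity to $\mathbb{E}[X(X-1)]$ directly (obtaining $\mathbb{E}[X(X-1)]\meg \ee N(\ee N-1)$) rather than bounding $\mathbb{E}[X^2]$ and subtracting the diagonal, but this is a cosmetic difference and the pigeonhole step is the same.
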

\begin{proof}
For every $i\in\{0,...,N-1\}$ let $\mathbf{1}_{A_i}$ be the indicator function of the event $A_i$ and set
$X=\sum_{i=0}^{N-1} \mathbf{1}_{A_i}$. Then $\mathbb{E}[X]\meg \ee N$ so, by convexity,
\[ \sum_{i\in\{0,...,N-1\}} \sum_{j\in\{0,...,N-1\}\setminus \{i\}} \mu(A_i\cap A_j) =\mathbb{E}[X(X-1)] \meg \ee N (\ee N-1).\]
Therefore, there exist $i,j\in\{0,...,N-1\}$ with $i\neq j$ such that $\mu(A_i\cap A_j)\meg \theta^2$.
\end{proof}
Finally, for every probability space $(\Omega,\Sigma,\mu)$, every $Y\in\Sigma$ with $\mu(Y)>0$ and every $A\in\Sigma$ by
$\mu(A \ | \ Y)$ we shall denote the conditional probability of $A$ relative to $Y$; that is,
\begin{equation} \label{e28}
\mu(A \ | \ Y)=\frac{\mu(A\cap Y)}{\mu(Y)}.
\end{equation}
The conditional probability measure of $\mu$ relative to $Y$ will be denoted by $\mu_Y$. Notice that
$\mu_Y(A)=\mu(A \ | \ Y)$ for every $A\in\Sigma$.


\section{Proof of Proposition \ref{ip5}}

This section is devoted to the proof of Proposition \ref{ip5} stated in the introduction. It is organized as follows. In \S 3.1 we introduce
the class of generalized Shelah lines and we present some of their basic properties. In \S 3.2 we define the primitive recursive function $\Phi$.
The proof of Proposition \ref{ip5} is given in \S 3.3. In \S 3.4 we prove a ``relativized" version of Proposition \ref{ip5}; this ``relativized"
version is needed for the proof of Theorem \ref{it4}. Finally, in \S 3.5 we make some comments concerning the upper bounds for the numbers
$\mathrm{Cor}(b,2,\theta,\ee)$ obtained by Proposition \ref{ip5}.

\subsection{Generalized Shelah lines}

We start with the following definition.
\begin{defn} \label{3d1}
Let $T$ be a finite homogeneous tree of height at least $2$. Also let $F\in\strong_2(T)$ and $P$ be a (possibly empty) subset of
$\{0,...,b_T-1\}$. The $P$-\emph{restriction} of $F$, denoted by $F|_P$, is defined to be the set
\begin{equation} \label{3e1}
F|_ P=\{F(0)\}\cup \big\{F(0)^{\con_F}\!p: p\in P \big\}.
\end{equation}
\end{defn}
Notice that $F|_\varnothing=\{F(0)\}$ and $F|_{\{0,...,b_T-1\}}=F$. Moreover, it easy to see that $F|_{P\cup Q}=F|_P\cup F|_Q$
for any pair $P$ and $Q$ of subsets of $\{0,...,b_T-1\}$. We are ready to introduce the main object of study in this subsection.
\begin{defn}[Standard generalized Shelah lines and their components] \label{3d2}
Let $b\in\nn$ with $b\meg 2$. Also let $i\in\{0,...,b-1\}$, $P\subseteq \{0,...,b-1\}$ and $N\in\nn$ with $N\meg 1$. The
\emph{standard $(b,i,P, N)$-generalized Shelah line}, denoted by $\mathcal{L}(b,i,P,N)$, is the subset of $b^{<N}$ defined by
\begin{equation} \label{3e2}
\mathcal{L}(b,i,P,N)=\bigcup_{k=0}^{N-1} \{i^k\} \cup \big\{i^k p^{N-1-k}: p\in P \big\}.
\end{equation}
For every $k\in\{0,...,N-1\}$ the $k$-\emph{component} of $\mathcal{L}(b,i,P,N)$ is defined by
\begin{equation} \label{3e3}
\mathcal{L}_k(b,i,P,N)=\{i^k\}\cup \big\{i^k p^{N-1-k}: p\in P\big\}.
\end{equation}
\end{defn}
Next we extend Definition \ref{3d2} to all finite homogeneous trees as follows.
\begin{defn}[Generalized Shelah lines of finite homogeneous trees] \label{3d3}
Let $T$ be a finite homogeneous tree and denote by $N$ its height. Also let $i\in\{0,...,b_T-1\}$ and $P\subseteq \{0,...,b_T-1\}$.
The $(i,P)$-\emph{generalized Shelah line} of $T$ is defined to be the image of $\mathcal{L}(b_T,i,P,N)$ under the canonical isomorphism
$\mathrm{I}(b_T^{<N},T)$ between $b_T^{<N}$ and $T$ (see \S 2.5). Respectively, for every $k\in \{0,...,N-1\}$ the $k$-\emph{component}
of the $(i,P)$-generalized Shelah line of $T$ is defined to be the image of the corresponding $k$-component $\mathcal{L}_k(b_T,i,P,N)$
of $\mathcal{L}(b_T,i,P,N)$ under the canonical isomorphism $\mathrm{I}(b_T^{<N},T)$.
\end{defn}
We isolate, below, some basic properties of all generalized Shelah lines of a finite homogeneous tree $T$.
\begin{enumerate}
\item[($\mathcal{P}$1)] Every generalized Shelah line of $T$ is the union of its components.
\item[($\mathcal{P}$2)] The last component of every generalized Shelah line of $T$ is a singleton.
\item[($\mathcal{P}$3)] If $T$ has height $N\meg 2$ and $k\in\{0,...,N-2\}$, then the $k$-component of the $(i,P)$-generalized
Shelah line of $T$ is the $P$-restriction of a strong subtree of $T$ of height $2$.
\end{enumerate}
Properties ($\mathcal{P}$1) and ($\mathcal{P}$2) are straightforward consequences of the relevant definitions. To see property ($\mathcal{P}$3),
consider the $k$-component $\mathcal{L}_k(b_T,i,P,N)$ of the standard generalized Shelah line $\mathcal{L}(b_T,i,P,N)$ and set
\begin{equation} \label{3e4}
F_k=\{i^k\}\cup \big\{ i^k j^{N-1-k}: j\in\{0,...,b_T-1\}\big\}.
\end{equation}
Notice that $F_k\in\strong_2(b_T^{<N})$ and that $F_k|_P=\mathcal{L}_k(b_T,i,P,N)$. Since strong subtrees of height $2$ and their restrictions
are preserved under canonical isomorphisms, we see that property ($\mathcal{P}$3) is also satisfied. The most important property, however, of
generalized Shelah lines is included in the following proposition.
\begin{prop} \label{line}
Let $T$ be a finite homogeneous tree of height at least $2$. Also let $i\in\{0,...,b_T-1\}$ and $P$ be a (possibly empty) subset of
$\{0,...,b_T-1\}$. If $i\notin P$, then the union of any two distinct components of the $(i,P)$-generalized Shelah
line of $T$ contains the $(P\cup\{i\})$-restriction of a strong subtree of $T$ of height $2$.
\end{prop}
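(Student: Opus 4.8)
The plan is to work with the standard generalized Shelah line $\mathcal{L}(b_T,i,P,N)$ inside $b_T^{<N}$ and exhibit the required strong subtree explicitly; the claim then transfers to $T$ since canonical isomorphisms preserve strong subtrees of height $2$ together with their $P$-restrictions (this is exactly the mechanism used to verify property $(\mathcal{P}3)$). So fix two distinct components, say the $k$-component and the $m$-component with $k<m\leqslant N-1$. I would like to realize the set $(F|_{P\cup\{i\}})$ for a suitable $F\in\strong_2(b_T^{<N})$ entirely inside $\mathcal{L}_k(b_T,i,P,N)\cup\mathcal{L}_m(b_T,i,P,N)$.

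The key observation is the choice of root. Take the node $s=i^k$, which is the root of the strong subtree $F_k$ witnessing property $(\mathcal{P}3)$ for the $k$-component. Its immediate successors (in the ambient tree $b_T^{<N}$) along direction $p$ for $p\in P$ already lie in $\mathcal{L}_k$, namely the nodes $i^kp^{N-1-k}$; these give the $P$-part of the restriction. The missing branch is the one in direction $i$: we need a node of $\mathcal{L}_k\cup\mathcal{L}_m$ that is a successor of $s$ extending $i^k{}^\frown i=i^{k+1}$ and sitting at the \emph{same level} as the $P$-branches of $F_k$ (which are at level $N-1$). Now the crucial point: if $k+1\leqslant m$, the node $i^m\in\mathcal{L}_m$ lies in $\suc(i^{k+1})$, and moreover the top node of $\mathcal{L}_m$, namely $i^m$ itself when $m=N-1$, or more precisely the component $\mathcal{L}_m$ contains nodes extending $i^m$; in particular $i^m$ (the first element of $\mathcal{L}_m$) is above $i^{k+1}$. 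Since $i\notin P$, the branch direction $i$ out of $s$ is genuinely distinct from all the directions in $P$, so the resulting set of $|P|+1$ immediate successors of $s$ is a legitimate strong subtree of height $2$ after one chooses matching levels. The remaining technical point is level-matching: I need the $i$-branch node and the $P$-branch nodes to sit at a common level, which forces taking the $i$-branch node at level $N-1$ as well; since $m\leqslant N-1$ and $\mathcal{L}_m$ contains $i^m$ together with (if $P\neq\varnothing$) nodes $i^mp^{N-1-m}$ at level $N-1$, there is such a node, and when $P=\varnothing$ the claim is about a chain and is immediate.

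Concretely, I would take $F$ to be the strong subtree of $b_T^{<N}$ with root $i^k$, with level set $\{k,N-1\}$, whose branch in direction $i$ passes through $i^{k+1}$ and terminates at a node of $\mathcal{L}_m$ at level $N-1$ (using that $m\geqslant k+1$, so $i^m$ extends $i^{k+1}$, hence lies in $\suc(i^k{}^\frown i)$, and hence the chosen leaf does too), and whose branches in directions $p\in P$ terminate at $i^kp^{N-1-k}\in\mathcal{L}_k$. Then $F|_{P\cup\{i\}}$ consists of $i^k$ (in $\mathcal{L}_k$), the nodes $i^kp^{N-1-k}$ for $p\in P$ (in $\mathcal{L}_k$), and the $i$-branch leaf (in $\mathcal{L}_m$), so $F|_{P\cup\{i\}}\subseteq\mathcal{L}_k\cup\mathcal{L}_m$. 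Transporting through $\mathrm{I}(b_T^{<N},T)$ finishes the proof.

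The main obstacle I anticipate is the level-matching bookkeeping: making sure the $i$-direction leaf chosen inside the $m$-component really does lie above the immediate successor $i^{k+1}$ of the root $i^k$ (which uses $k<m$ in an essential way, so that $i^m\sqsupseteq i^{k+1}$), and that one can simultaneously place the $P$-direction leaves at the same level $N-1$ so that conditions (a)--(c) in the definition of a strong subtree are met. Everything else — that $i\notin P$ guarantees the $|P|+1$ chosen directions out of the root are distinct, and that canonical isomorphisms transport the conclusion from $b_T^{<N}$ to $T$ — is routine.
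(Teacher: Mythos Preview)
Your proposal is correct and follows essentially the same approach as the paper: reduce to $b_T^{<N}$ via the canonical isomorphism, take root $i^k$, send the $P$-directions to the leaves $i^k p^{N-1-k}\in\mathcal{L}_k$, and send the $i$-direction to a level-$(N-1)$ node of $\mathcal{L}_m$ (the paper picks $i^{m}p^{N-1-m}$ with $p=\min P$ explicitly), handling $P=\varnothing$ separately as the trivial chain case with level set $\{k,m\}$. The only cosmetic difference is that the paper makes the case split $P=\varnothing$ versus $P\neq\varnothing$ explicit from the outset, whereas you treat it parenthetically; you might tighten the write-up by doing the same, since your ``concrete'' $F$ with level set $\{k,N-1\}$ genuinely fails when $P=\varnothing$ and $m<N-1$.
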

\begin{proof}
Clearly we may assume that $T$ is the tree $b_T^{<N}$ where $N$ is the height of $T$. We fix $0\mik k_0<k_1\mik N-1$ and we consider
the following cases.
\medskip

\noindent \textsc{Case 1}: $P=\varnothing$. Let $F\in\strong_2(b_T^{<N})$ be defined by
\[ F=\{i^{k_0}\} \cup \big\{ i^{k_0} j^{k_1-k_0}: j\in\{0,...,b_T-1\}\big\} \]
and observe that $F|_{\{i\}}=\{i^{k_0}\}\cup\{i^{k_1}\}=\mathcal{L}_{k_0}(b_T,i,\varnothing,N)\cup \mathcal{L}_{k_1}(b_T,i,\varnothing,N)$.
\medskip

\noindent \textsc{Case 2}: $P\neq\varnothing$. We set $p=\min P$. Let $G\in\strong_2(b_T^{<N})$ be defined by
\[ G=\{i^{k_0}\} \cup \big\{i^{k_0}j^{N-1-k_0}: j\in\{0,..., b_T-1\} \text{ and } j\neq  i\big\}\cup \{i^{k_1}p^{N-1-k_1}\}.\]
Notice that $G|_P=\mathcal{L}_{k_0}(b_T,i,P,N)$ and $G|_{\{i\}}=\{i^{k_0}\}\cup \{i^{k_1}p^{N-1-k_1}\}$. Thus,
\[ G|_{P\cup\{i\}}= G|_P\cup G|_{\{i\}}\subseteq \mathcal{L}_{k_0}(b_T,i,P,N)\cup\mathcal{L}_{k_1}(b_T,i,P,N). \]
The proof is completed.
\end{proof}

\subsection{The primitive recursive function $\Phi$}

For every $b,i,m\in\nn$ with $b\meg 2$ and $m\meg 2$ we define recursively the integer $M^{(i)}(b,m)$ by the rule
\begin{equation} \label{3e5}
\left\{ \begin{array} {l} M^{(0)}(b,m)=m, \\ M^{(i+1)}(b,m)=\mil\big(b,M^{(i)}(b,m),2,2\big). \end{array} \right.
\end{equation}
Inductively, it is easy to show that
\begin{equation} \label{e32new}
M^{(i)}(b,m)\meg m.
\end{equation}
Moreover, we have the following.
\begin{fact} \label{332fact}
There exists a primitive recursive function $\Phi:\nn^2\to\nn$ belonging to the class $\mathcal{E}^8$ of Grzegorczyk's hierarchy
such that for every integer $b\meg 2$ and every integer $m\meg 2$ we have
\begin{equation} \label{e36}
M^{(b-1)}(b,m) \mik \Phi(b,m).
\end{equation}
\end{fact}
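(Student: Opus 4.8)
The plan is to realise $M^{(b-1)}(b,m)$ as a $(b-1)$-fold iteration of a single function belonging to the Grzegorczyk class $\mathcal{E}^7$, and then to invoke the standard fact that iterating a member of $\mathcal{E}^n$ a \emph{variable} number of times produces a function of $\mathcal{E}^{n+1}$; here $n=7$, which accounts for the index $8$ in the statement.

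First I would record the (elementary) monotonicity of the finite Milliken numbers in their second argument: if $2\mik m_1\mik m_2$, then $\mil(b,m_1,2,2)\mik\mil(b,m_2,2,2)$. Indeed, if $T$ is a finite homogeneous tree with branching number $b$ and height at least $\mil(b,m_2,2,2)$ and $c$ is any $2$-colouring of $\strong_2(T)$, then there is $S\in\strong_{m_2}(T)$ on which $\strong_2$ is $c$-monochromatic; the initial strong subtree $S\upharpoonright(m_1-1)$ of $S$ lies in $\strong_{m_1}(T)$ and satisfies $\strong_2\big(S\upharpoonright(m_1-1)\big)\subseteq\strong_2(S)$, hence is $c$-monochromatic too. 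Next, Theorem \ref{Sokic} with $k=2$ supplies a function $\phi_2\in\mathcal{E}^7$ with $\mil(b,m,2,2)\mik\phi_2(b,m,2)$ for all $b\meg 2$ and $m\meg 2$. Put $\psi(b,x)=\phi_2(b,x,2)$; substituting the constant $2$ keeps us in $\mathcal{E}^7$, so $\psi\in\mathcal{E}^7$, and moreover $\psi(b,x)\meg\mil(b,x,2,2)\meg x$ for $x\meg 2$.

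Now define $H\colon\nn^3\to\nn$ by primitive recursion on the first coordinate, with $b,m$ as parameters:
\[ H(0,b,m)=m,\qquad H(i+1,b,m)=\psi\big(b,H(i,b,m)\big), \]
and set $\Phi(b,m)=H(b-1,b,m)$, where $b-1$ is understood as truncated subtraction (so $\Phi$ is total and primitive recursive, and coincides with the ordinary value for the relevant range $b\meg 2$). Since $\mathcal{E}^7$ is closed under composition and the iteration of a member of $\mathcal{E}^7$ with side parameters lies in $\mathcal{E}^8$, while $\mathcal{E}^8$ is in turn closed under composition with the elementary functions $b\mapsto b-1$ and the coordinate projections, we obtain $H\in\mathcal{E}^8$ and hence $\Phi\in\mathcal{E}^8$. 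It then remains to check, by induction on $i$, that $M^{(i)}(b,m)\mik H(i,b,m)$ for every $i\in\nn$: the case $i=0$ is an equality, and assuming the bound for $i$ we have, using (\ref{e32new}) to guarantee $H(i,b,m)\meg m\meg 2$, then the monotonicity of $\mil$ together with the inductive hypothesis, and finally Theorem \ref{Sokic},
\[ M^{(i+1)}(b,m)=\mil\big(b,M^{(i)}(b,m),2,2\big)\mik\mil\big(b,H(i,b,m),2,2\big)\mik\phi_2\big(b,H(i,b,m),2\big)=H(i+1,b,m). \]
Specialising to $i=b-1$ yields $M^{(b-1)}(b,m)\mik H(b-1,b,m)=\Phi(b,m)$, as required.

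The only step that is not routine bookkeeping is the Grzegorczyk-class assertion, namely that the variable-length iteration $H$ stays inside $\mathcal{E}^8$. This is precisely where it matters that Soki\'c's estimate lives one level lower, in $\mathcal{E}^7$: unbounded primitive recursion over a function of $\mathcal{E}^n$ is bounded by a finite iterate of the defining function of $\mathcal{E}^{n+1}$, and $\mathcal{E}^{n+1}$ is closed under limited recursion against such bounds, which places $H$ in $\mathcal{E}^8$. Everything else — the monotonicity of $\mil$, the one-line induction on $i$, and the closure of $\mathcal{E}^8$ under composition with elementary functions — is standard.
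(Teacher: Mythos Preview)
Your proof is correct and follows essentially the same approach as the paper's: define a primitive-recursive iteration of the Soki\'c bound $\phi_2(\cdot,\cdot,2)$, compose with truncated subtraction to obtain $\Phi$, and observe that iterating an $\mathcal{E}^7$ function a variable number of times lands in $\mathcal{E}^8$. Your write-up is in fact more detailed than the paper's, which leaves the inequality $M^{(i)}(b,m)\mik \psi(i,b,m)$ as ``easy to check''; you supply the missing ingredient (monotonicity of $\mil(b,\cdot,2,2)$) and run the induction explicitly. One tiny slip: you cite (\ref{e32new}) to get $H(i,b,m)\meg m$, but (\ref{e32new}) is about $M^{(i)}$, not $H$; the correct justification is the inequality $\psi(b,x)\meg x$ that you already established a few lines earlier.
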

\begin{proof}
The result follows easily by Theorem \ref{Sokic} and elementary properties of primitive recursive functions (see, e.g., \cite{Rose}).
We will provide the details for the benefit of the reader. To this end, we need first to recall some pieces of notation. For every
$j\in\{1,2\}$ by $\pi_j:\nn^2\to\nn$ we denote the projection function to the $j$-coordinate; it belongs to the class $\mathcal{E}^0$.
Also, let $\mathrm{ms}:\nn^2\to\nn$ be the modified substraction function defined by $\mathrm{ms}(n,k)=n-k$ if $n\meg k$ and
$\mathrm{ms}(n,k)=0$ if $n<k$; it belongs to the class $\mathcal{E}^3$.

Now, let $\phi_2:\nn^3\to\nn$ be the primitive recursive function obtained by Theorem \ref{Sokic} for ``$k=2$". Recall that $\phi_2$
belongs to the class $\mathcal{E}^7$ and that for every integer $b\meg 2$, every integer $m\meg 2$ and every integer $r\meg 2$ we have
$\mil(b,m,2,r)\mik \phi_2(b,m,r)$. Define $\psi:\nn^3\to\nn$ by the rule
\[ \left\{ \begin{array} {l} \psi(0,x)=\pi_2(x), \\ \psi(i+1,x)=\phi_2\big( \pi_1(x),\psi(i,x),2\big). \end{array} \right. \]
Since $\phi_2$ belongs to the class $\mathcal{E}^7$, we see that the function $\psi$ belongs to the class $\mathcal{E}^8$.
Finally, let $\Phi:\nn^2\to\nn$ be defined by
\[ \Phi(x)=\psi\big( \mathrm{ms}(\pi_1(x),1),\pi_1(x),\pi_2(x)\big). \]
Clearly the function $\Phi$ belongs to the class $\mathcal{E}^8$. It is easy to check that $\Phi$ is as desired.
\end{proof}

\subsection{Proof of Proposition \ref{ip5}}

By Fact \ref{332fact}, it is enough to show the following.
\begin{lem} \label{3l7}
Let $0<\theta<\ee\mik 1$. Also let $T$ be a finite homogeneous tree such that
\begin{equation} \label{3e7}
h(T)\meg M^{(b_T-1)} \Big(b_T,\Big\lceil\frac{2^{b_T}-1}{\ee^{2^{b_T}}-\theta^{2^{b_T}}}\Big\rceil\Big)
\end{equation}
and $\{A_t: t\in T\}$ be a family of measurable events in a probability space  $(\Omega,\Sigma,\mu)$ satisfying $\mu(A_t)\meg\ee$
for every $t\in T$. Then there exists $F\in\strong_2(T)$ such that
\begin{equation} \label{3e8}
\mu\Big(\bigcap_{t\in F}A_t\Big)\meg \theta^{2^{b_T}}.
\end{equation}
\end{lem}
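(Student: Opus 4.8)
The statement to prove is Lemma \ref{3l7}: given a finite homogeneous tree $T$ whose height is large enough (bounded below by $M^{(b_T-1)}$ applied to a quantity depending on $\ee,\theta,b_T$), and a family of events each of measure $\meg\ee$, we can find a strong subtree $F$ of height $2$ whose nodes have joint probability $\meg\theta^{2^{b_T}}$. The key insight is that a strong subtree of height $2$ has $1+b_T$ nodes, and $\theta^{2^{b_T}}$ is roughly $\theta$ raised to a power that counts \emph{subsets} of $\{0,\dots,b_T-1\}$; this suggests an iterative argument where, at each stage, we control the joint probability over the $P$-restriction of a height-$2$ strong subtree for larger and larger $P$, doubling the relevant exponent each time we enlarge $P$, and the definition \eqref{3e5} of $M^{(i)}$ — iterating the finite Milliken number $b_T-1$ times — is exactly what is needed to run the recursion through all of $\{0,\dots,b_T-1\}$.

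\textbf{Key steps.} I would argue as follows. Write $b=b_T$ and $N=h(T)$, and set $K=\lceil(2^b-1)/(\ee^{2^b}-\theta^{2^b})\rceil$, so $N\meg M^{(b-1)}(b,K)$. By the convention we may assume $T=b^{<N}$. The idea is to build, by a downward induction on $i$ from $i=b-1$ to $i=0$, a decreasing chain of strong subtrees together with a sequence of lower bounds. More precisely: first, I would fix the distinguished coordinate, say the generalized Shelah line based at $i_0$; the point of Proposition \ref{line} is that the union of two distinct \emph{components} of the $(i,P)$-generalized Shelah line contains the $(P\cup\{i\})$-restriction of a height-$2$ strong subtree. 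Each component is (by property $(\mathcal{P}3)$) a $P$-restriction of a height-$2$ strong subtree, hence has $1+|P|$ nodes. So if I can find a single strong subtree $S$ (of sufficiently large height) such that for \emph{all} components of its $(i,P)$-Shelah line the joint probability of the events over that component is $\meg\delta$, then applying Lemma \ref{measure} to the components (viewed as events $\bigcap$ over each component, with the threshold parameter set appropriately) produces two components whose \emph{intersection} — which contains a $(P\cup\{i\})$-restriction — has joint probability $\meg$ (roughly) $\delta^2$; this is the doubling of the exponent. The number of components is $N'$, the height of $S$, and Lemma \ref{measure} needs $N'\meg(\ee'^2-\theta'^2)^{-1}$ for the appropriate $\ee',\theta'$; the quantity $K$ is chosen precisely so that this works at the base. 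To make all $N'$ components simultaneously well-behaved — i.e. to get the uniform lower bound $\delta$ over every component before applying Lemma \ref{measure} — I would use the finite Milliken theorem (Corollary \ref{tmilfinite}), coloring each height-$2$ strong subtree $G$ of the ambient tree by (a discretization of) $\mu(\bigcap_{t\in G|_P}A_t)$, passing to a monochromatic strong subtree, and thereby arranging that the relevant lower bound holds along every component at once; this is the step that consumes one layer of the iterated Milliken number $M^{(i+1)}=\mil(b,M^{(i)},2,2)$ each time $|P|$ grows by one. After $b$ steps we reach $P=\{0,\dots,b-1\}$, at which point $F|_P=F$ is a full height-$2$ strong subtree and the exponent has been doubled $b$ times starting from $2$, landing at $2^b$; running the arithmetic with the chosen $K$ keeps the threshold above $\theta$ throughout, yielding \eqref{3e8}.

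\textbf{The main obstacle.} The delicate point is bookkeeping the two intertwined parameters: the measure threshold, which must stay strictly above $\theta$ through all $b$ doubling steps (so at step $i$ we should be comparing some $\ee_i$ with some $\theta_i$ interpolating between $\ee,\theta$ and $\ee^{2^b},\theta^{2^b}$), and the tree height, which is eaten by one application of $\mil(b,\cdot,2,2)$ per step and must never run out — this is exactly why the hypothesis \eqref{3e7} has the $(b-1)$-fold iterate $M^{(b-1)}$. Getting the invariant of the induction stated so that it is both strong enough to propagate and clean enough to close at $i=0$ is the real work; in particular one must be careful that Proposition \ref{line} requires $i\notin P$, so the distinguished index $i$ must be handled consistently (e.g. always adjoining a fresh index, or fixing $i$ and only ever enlarging $P$ within the complement of $\{i\}$ and then adding $i$ last). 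Once the invariant is right, each inductive step is a routine combination of Corollary \ref{tmilfinite}, Lemma \ref{measure}, Proposition \ref{line}, and the elementary inequality defining $K$.
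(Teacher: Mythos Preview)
Your proposal is correct and follows essentially the same route as the paper: iterate $b$ times, at step $i$ using the finite Milliken theorem (consuming one layer of $M^{(\cdot)}$) to pass to a strong subtree on which the $\{0,\dots,i-1\}$-restriction of every height-$2$ strong subtree has large joint measure, and then along the $(i,\{0,\dots,i-1\})$-generalized Shelah line combine Lemma~\ref{measure} with Proposition~\ref{line} to upgrade to $\{0,\dots,i\}$-restrictions, doubling the exponent. The one point to sharpen is that the paper takes your ``adjoining a fresh index'' option---the distinguished direction $i$ changes at each step (it is precisely the index being added to $P$), and the exponent starts at $2^0=1$ rather than $2$, so $b$ doublings land at $2^b$; with that in mind your bookkeeping sketch matches the paper's choice $\delta=(\ee^{2^b}-\theta^{2^b})/(2^b-1)$ and $\ee_{i+1}^{2^{i+1}}=\ee_i^{2^{i+1}}-\delta$.
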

\begin{proof}
In what follows, for notational simplicity, by $b$ we shall denote the branching number of the tree $T$. We set
\begin{equation} \label{3e9}
\delta=\frac{\ee^{2^b}-\theta^{2^b}}{2^b-1}.
\end{equation}
Recursively, for every $i\in\{0,...,b-1\}$ we will select
\begin{enumerate}
\item[(i)] a positive real $\ee_i$,
\item[(ii)] a positive integer $N_i$ and
\item[(iii)] a strong subtree $R_i$ of $T$
\end{enumerate}
such that the following conditions are satisfied.
\begin{enumerate}
\item[(C1)] We have $\ee_0=\ee$ and $\ee_{i+1}^{2^{i+1}}=\ee_{i}^{2^{i+1}}-\delta$ for every $i\in \{0,...,b-2\}$.
\item[(C2)] For every $i\in \{0,...,b-1\}$ we have $N_{i}=M^{(b-1-i)}\big(b,\lceil \delta^{-1}\rceil\big)$.
\item[(C3)] For every $i\in\{0,...,b-2\}$ the tree $R_{i+1}$ is a strong subtree of $R_i$.
\item[(C4)] For every $i\in\{0,...,b-1\}$ the height of the tree $R_i$ is $N_i$.
\item[(C5)] For every $i\in\{0,...,b-1\}$ and every $F\in\strong_2(R_i)$ we have
\begin{equation} \label{3e10}
\mu\Big(\bigcap_{t\in F|_{\{0,...,i-1\}}} A_t\Big)\meg\ee_i^{2^i}.
\end{equation}
with the convention that $\{0,...,i-1\}=\varnothing$ if $i=0$.
\end{enumerate}
We proceed to the recursive selection. For $i=0$ we set ``$\ee_0=\ee$", ``$N_0=M^{(b-1)}(b,\big\lceil \delta^{-1}\rceil)$" and
``$R_0=T\upharpoonright (N_0-1)$" and we observe that with these choices conditions (C1), (C2) and (C4) are satisfied. Noticing that
$F|_\varnothing=\{F(0)\}$ for every $F\in\strong_2(T)$ we see that condition (C5) is also satisfied. Since condition (C3) is meaningless
in this case, the first step of the recursive selection is completed.

Let $i\in\{0,...,b-2\}$ and assume that the recursive selection has been carried out up to $i$ so that conditions (C1)-(C5) are
satisfied. We start the next step of the recursive selection setting ``$\ee_{i+1}=\big(\ee_i^{2^{i+1}}-\delta\big)^{1/2^{i+1}}$"
and we observe that with this choice condition (C1) is satisfied. Next we set ``$N_{i+1}=M^{(b-1-i-1)}\big(b,\lceil \delta^{-1}\rceil\big)$"
and we notice that condition (C2) is also satisfied. Now, let
\begin{equation} \label{3e11}
\mathcal{F}=\Big\{ F\in \strong_2(R_i) : \mu\Big(\bigcap_{t\in F|_{\{0,...,i\}}} A_t\Big)\meg \ee_{i+1}^{2^{i+1}} \Big\}.
\end{equation}
By our inductive assumptions, the height of the tree $R_i$ is $N_i$. Moreover,
\begin{eqnarray*}
N_{i} & \stackrel{\mathrm{(C2)}}{=} & M^{(b-1-i)}\big(b,\lceil \delta^{-1} \rceil\big) \stackrel{(\ref{3e5})}{=}
\mil\Big( b, M^{(b-1-i-1)} \big( b,\lceil \delta^{-1} \rceil \big),2,2\Big) \\
& = & \mil(b,N_{i+1},2,2).
\end{eqnarray*}
Therefore, by Corollary \ref{tmilfinite}, there exists a strong subtree $R$ of $R_i$ of height $N_{i+1}$
such that either $\strong_2(R)\subseteq \mathcal{F}$ or $\strong_2(R)\cap \mathcal{F}=\varnothing$. We set ``$R_{i+1}=R$"
and we claim that with this choice all the other conditions are satisfied. It is clear that (C3) and (C4) are satisfied, and so,
we only need to check condition (C5). Notice that it is enough to show that $\strong_2(R)\cap\mathcal{F}\neq\varnothing$.
To this end we argue as follows. Let $\mathcal{L}$ be the $(i,\{0,...,i-1\})$-generalized Shelah line of $R$ (recall that,
by convention, we set $\{0,...,i-1\}=\varnothing$ if $i=0$). For every $k\in\{0,...,N_{i+1}-1\}$ let $\mathcal{L}_k$ be the
$k$-component of $\mathcal{L}$ and set
\begin{equation} \label{3e12}
A_k=\bigcap_{t\in \mathcal{L}_k} A_t.
\end{equation}
By property ($\mathcal{P}$3) in \S 3.1, if $k\in \{0,...,N_{i+1}-2\}$, then the $k$-component $\mathcal{L}_k$ of $\mathcal{L}$ is the
$\{0,...,i-1\}$-restriction of some strong subtree of $R$ of height $2$. This fact and condition (C5) of our inductive assumptions
yield that $\mu(A_k)\meg \ee_i^{2^i}$ if $k\in\{0,...,N_{i+1}-2\}$. On the other hand, if $k=N_{i+1}-1$, then by property
($\mathcal{P}$2) in \S 3.1 the $k$-component of $\mathcal{L}$ is a singleton. Noticing that $\ee\meg \ee_{i}^{2^i}$ we conclude that
\begin{equation} \label{3e13}
\mu(A_{k})\meg\ee_{i}^{2^i}
\end{equation}
for every $k\in \{0,...,N_{i+1}-1\}$. Moreover, by the choice of $N_{i+1}$ and $\ee_{i+1}$, we have
\begin{equation} \label{3e14}
N_{i+1} = M^{(b-1-i-1)}\big( b,\lceil \delta^{-1}\rceil\big) \stackrel{(\ref{e32new})}{\meg} \lceil \delta^{-1}\rceil \meg \frac{1}{\delta} =
\frac{1}{(\ee_i^{2^i})^2 -(\ee_{i+1}^{2^i})^2}.
\end{equation}
Hence, by Lemma \ref{measure} applied for ``$N=N_{i+1}$", ``$\ee=\ee_{i}^{2^i}$" and ``$\theta=\ee_{i+1}^{2^i}$",
there exist $0\mik k< k' < N_{i+1}$ such that $\mu(A_k\cap A_{k'})\meg \ee_{i+1}^{2^{i+1}}$. By Proposition \ref{line}, there
exists $G\in\strong_2(R)$ such that $G|_{\{0,...,i-1\}\cup\{i\}}\subseteq \mathcal{L}_{k}\cup\mathcal{L}_{k'}$.
Observing that $G|_{\{0,...,i\}}=G|_{\{0,...,i-1\}\cup\{i\}}$ we see that
\begin{equation} \label{3e15}
\mu\Big(\bigcap_{t\in G|_{\{0,...,i\}}}A_t\Big) \meg \mu\Big(\bigcap_{t\in \mathcal{L}_{k}\cup\mathcal{L}_{k'}}A_t\Big)=
\mu(A_k\cap A_{k'})\meg \ee_{i+1}^{2^{i+1}}.
\end{equation}
Therefore, $G\in\strong_2(R)\cap\mathcal{F}$. This shows that condition (C5) is also satisfied, and so, the recursive selection
is completed.

We isolate, for future use, the following consequence of condition (C1). The proof is left to the interested reader.
\begin{fact} \label{3f8}
For every $i\in\{0,...,b-1\}$ we have $\ee_i^{2^i}\meg \ee^{2^i}- (2^i-1)\delta$.
\end{fact}
We are ready for the final step of the argument. Let $R_{b-1}$ be the strong subtree of $T$ obtained above. We will show
that there exists $F\in\strong_2(R_{b-1})$ satisfying the estimate in (\ref{3e8}). This will finish the proof. To this end we set
\begin{equation} \label{3e17}
r=\ee_{b-1}^{2^{b-1}} \ \text{ and } \ \eta=\big( r^2-\delta\big)^{1/2}.
\end{equation}
By condition (C5) and the choice of $r$, for every $F\in\strong_2(R_{b-1})$ we have
\begin{equation} \label{3e18}
\mu\Big(\bigcap_{t\in F|_{\{0,...,b-2\}}} A_t\Big)\meg r.
\end{equation}
Moreover,
\begin{equation} \label{3e19}
h(R_{b-1}) \stackrel{\mathrm{(C4)}}{=} N_{b-1} \stackrel{\mathrm{(C2)}}{=} M^{(0)}\big( b,\lceil \delta^{-1}\rceil\big)
\stackrel{(\ref{3e5})}{=} \lceil \delta^{-1}\rceil \meg \frac{1}{\delta} \stackrel{(\ref{3e17})}{=}\frac{1}{r^2-\eta^2}.
\end{equation}
Let $\mathcal{G}$ be the $(b-1,\{0,...,b-2\})$-generalized Shelah line of $R_{b-1}$. Also, for every $k\in\{0,...,h(R_{b-1})-1\}$
let $\mathcal{G}_k$ be the $k$-component of $\mathcal{G}$ and set
\begin{equation} \label{3e20}
B_k =\bigcap_{t\in\mathcal{G}_k} A_t.
\end{equation}
Arguing precisely as in the ``one-step extension" of the recursive selection and using the estimates in (\ref{3e18}) and
(\ref{3e19}), it is possible to find $0\mik k< k' < h(R_{b-1})$ and $F\in\strong_2(R_{b-1})$ such that $\mu(B_k\cap B_{k'})\meg \eta^2$
and $F|_{\{0,...,b-2\}\cup\{b-1\}}\subseteq \mathcal{G}_{k}\cup\mathcal{G}_{k'}$. Since $F=F|_{\{0,...,b-2\}\cup\{b-1\}}$ we see that
\begin{equation} \label{3e21}
\mu\Big(\bigcap_{t\in F}A_t\Big)\meg \mu(B_k\cap B_{k'})\meg \eta^2.
\end{equation}
Moreover, by (\ref{3e17}) and Fact \ref{3f8}, we have
\begin{equation} \label{3e22}
\eta^2 \meg \big( \ee^{2^{b-1}}-(2^{b-1}-1)\delta\big)^2 -\delta \meg \ee^{2^b}- (2^b-1)\delta \stackrel{(\ref{3e9})}{=} \theta^{2^b}.
\end{equation}
The proof of Lemma \ref{3l7} is completed.
\end{proof}
As we have already indicated in the beginning of the subsection, having completed the proof of Lemma \ref{3l7}, the proof of Proposition
\ref{ip5} is also completed.

\subsection{Consequences}

We have already mentioned that in this subsection we will give a ``relativized" version of Proposition \ref{ip5}. To this end
we need, first, to introduce some quantitative invariants closely related to the numbers $\mathrm{Cor}(b,2,\theta,\ee)$.
\begin{defn} \label{3d9}
For every integer $b\meg 2$ and every pair of reals $0<\theta<\ee\mik 1$ by $\mathrm{Rel}(b,\theta,\ee)$ we shall
denote the least integer $N$ (if it exists) with the following property. For every finite homogeneous tree $T$ with
branching number $b$ and of height at least $N$ and every family $\{A_t:t\in T\}$ of measurable events in a probability
space $(\Omega,\Sigma,\mu)$ satisfying $\mu(A_t)\meg\ee$ for every $t\in T$, there exists $F\in\strong_2(T)$ such that
\begin{equation} \label{3e23}
\mu\Big(\bigcap_{t\in F(1)}A_t \ \big| \ A_{F(0)}\Big)\meg \theta^{2^b-1}.
\end{equation}
\end{defn}
We will show that the numbers $\mathrm{Rel}(b,\theta,\ee)$ exist. In fact, we shall obtain upper bounds which are expressed in
terms of the Milliken's numbers. Specifically, for every integer $b\meg 2$ and every $0<\theta<\ee\mik 1$ we set
\begin{equation} \label{3e24}
\lambda(b,\theta,\ee)= \big(\ee\cdot \theta^{-1} \big)^{\frac{2^b-1}{2^b+1}}
\end{equation}
and we notice that $\lambda(b,\theta,\ee)>1$. Also let
\begin{equation} \label{3e25}
r(b,\theta,\ee)=\Big\lceil\frac{\ln\ee^{-1}}{\ln\lambda(b,\theta,\ee)}\Big\rceil.
\end{equation}
Finally, for every $i\in\{0,...,r(b,\theta,\ee)\}$ let
\begin{equation} \label{3e26}
\ee_i=\ee\cdot\lambda(b,\theta,\ee)^{i-1}
\end{equation}
and define
\begin{equation} \label{3e27}
m(b,\theta,\ee)=\max\Big\{ M^{(b-1)}\Big(b,\Big\lceil\frac{2^b-1}{\ee_{i}^{2^b}-\ee_{i-1}^{2^b}}\Big\rceil\Big):
1\mik i\mik r(b,\theta,\ee) \Big\}.
\end{equation}
We have the following.
\begin{cor} \label{relcor1prop}
For every integer $b\meg 2$ and every $0<\theta<\ee\mik 1$ we have
\begin{equation} \label{3e28}
\mathrm{Rel}(b,\theta,\ee) \mik \mil\big(b,m(b,\theta,\ee),1,r(b,\theta,\ee)\big).
\end{equation}
\end{cor}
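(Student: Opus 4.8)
The plan is to reduce the "relativized" statement to repeated applications of Proposition \ref{ip5} (equivalently, Lemma \ref{3l7}), combined with a single application of Milliken's finite theorem with $r(b,\theta,\ee)$ colors. Fix a finite homogeneous tree $T$ with branching number $b$ and height at least $\mil\big(b,m(b,\theta,\ee),1,r(b,\theta,\ee)\big)$, together with a family $\{A_t:t\in T\}$ with $\mu(A_t)\meg\ee$. The idea is to color each node $t\in T$ (i.e. each element of $\strong_1(T)$) by the "index" $j\in\{1,\dots,r(b,\theta,\ee)\}$ such that $\mu(A_t)$ lies in the interval $[\ee_{j-1},\ee_j)$, where the $\ee_i=\ee\cdot\lambda^{i-1}$ form a geometric ladder from $\ee_0=\ee/\lambda<\ee$ up past $\ee_{r}\meg 1$; since $\ee\mik\mu(A_t)\mik 1$ and $\ee_0<\ee\mik\ee_1$, $\ee_r\meg 1$, every node gets a color in this range. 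By the defining property of $\mil(b,m(b,\theta,\ee),1,r(b,\theta,\ee))$, there is a strong subtree $S'$ of $T$ of height $m(b,\theta,\ee)$ on which this coloring is constant, say with value $j_0$; thus every $t\in S'$ satisfies $\ee_{j_0-1}\mik\mu(A_t)<\ee_{j_0}$, and in particular $\mu(A_t)\meg\ee_{j_0-1}$.

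Now I would invoke Proposition \ref{ip5} inside $S'$ with the pair of reals $\theta'=\ee_{j_0-1}$ and $\ee'=\ee_{j_0}$ (note $0<\theta'<\ee'\mik 1$ by construction, using $\ee_r\meg 1$). The height requirement there is $\Phi\big(b,\lceil (2^b-1)/((\ee')^{2^b}-(\theta')^{2^b})\rceil\big)$, and by Fact \ref{332fact} this is bounded by $M^{(b-1)}\big(b,\lceil(2^b-1)/(\ee_{j_0}^{2^b}-\ee_{j_0-1}^{2^b})\rceil\big)$, which is at most $m(b,\theta,\ee)=h(S')$ by the very definition of $m(b,\theta,\ee)$ as the maximum over $1\mik i\mik r(b,\theta,\ee)$. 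Hence Proposition \ref{ip5} produces $F\in\strong_2(S')\subseteq\strong_2(T)$ with
\[ \mu\Big(\bigcap_{t\in F}A_t\Big)\meg (\theta')^{2^b}=\ee_{j_0-1}^{2^b}. \]
Since $F(0)\in S'$ has color $j_0$, we also have $\mu(A_{F(0)})\mik\ee_{j_0}$. Therefore
\[ \mu\Big(\bigcap_{t\in F(1)}A_t\ \big|\ A_{F(0)}\Big)=\frac{\mu\big(\bigcap_{t\in F}A_t\big)}{\mu(A_{F(0)})}\meg\frac{\ee_{j_0-1}^{2^b}}{\ee_{j_0}}=\frac{(\ee\lambda^{j_0-2})^{2^b}}{\ee\lambda^{j_0-1}}=\ee^{2^b-1}\lambda^{(j_0-2)2^b-(j_0-1)}. \]

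The remaining point — and the step that requires the precise choice of $\lambda(b,\theta,\ee)=(\ee\theta^{-1})^{(2^b-1)/(2^b+1)}$ — is to check that this last quantity is at least $\theta^{2^b-1}$ for every admissible $j_0\in\{1,\dots,r(b,\theta,\ee)\}$, which by homogeneity in $j_0$ reduces to verifying the worst case. The exponent of $\lambda$ is $(j_0-2)2^b-(j_0-1)=(2^b-1)j_0-(2^{b+1}-1)$, which is minimized at $j_0=1$, giving $\lambda$-exponent $-2^b$; so it suffices to show $\ee^{2^b-1}\lambda^{-2^b}\meg\theta^{2^b-1}$, i.e. $\lambda^{2^b}\mik(\ee/\theta)^{2^b-1}$. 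With the stated $\lambda$ this is $(\ee\theta^{-1})^{2^b(2^b-1)/(2^b+1)}\mik(\ee\theta^{-1})^{2^b-1}$, which holds since $\ee\theta^{-1}>1$ and $2^b/(2^b+1)<1$. (The apparent awkwardness that the chosen $\lambda$ makes the $j_0=1$ case tight is exactly why the definition has $2^b+1$ in the denominator rather than something larger.) This elementary verification, together with the two displayed estimates above, completes the proof; the main obstacle is simply bookkeeping the geometric ladder so that the height bound from Proposition \ref{ip5} is absorbed by $m(b,\theta,\ee)$ uniformly over the color class, and checking the final inequality $\lambda^{2^b}\mik(\ee\theta^{-1})^{2^b-1}$ for the extreme value of $j_0$.
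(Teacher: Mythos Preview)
Your overall strategy is exactly the paper's: color nodes by which rung of the geometric ladder $\ee_i=\ee\lambda^{i-1}$ their measure falls into, use Milliken to get a monochromatic strong subtree of height $m(b,\theta,\ee)$, then apply Lemma~\ref{3l7} inside it and divide. However, there are a few bookkeeping slips that break the argument as written.

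First and most importantly, your application of Proposition~\ref{ip5} is illegitimate: you take $\ee'=\ee_{j_0}$ and $\theta'=\ee_{j_0-1}$, but on your color class you only have $\mu(A_t)\meg\ee_{j_0-1}$, not $\mu(A_t)\meg\ee_{j_0}$. Proposition~\ref{ip5} (and Lemma~\ref{3l7}) requires $\mu(A_t)\meg\ee'$, so the hypothesis fails. The paper instead takes the lower endpoint as its ``$\ee$'' and drops one more rung for its ``$\theta$'': with the paper's indexing $\ee_{i_0}\mik\mu(A_t)\mik\ee_{i_0+1}$, it applies Lemma~\ref{3l7} with $\ee=\ee_{i_0}$, $\theta=\ee_{i_0-1}$, obtaining $\mu\big(\bigcap_{t\in F}A_t\big)\meg\ee_{i_0-1}^{2^b}$ and then dividing by $\mu(A_{F(0)})\mik\ee_{i_0+1}$. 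This costs an extra factor of $\lambda$ compared to your (unjustified) bound, so the worst-case $\lambda$-exponent is $-(2^b+1)$ rather than $-2^b$; this is precisely why $\lambda$ is defined with $2^b+1$ in the denominator, making $\ee^{2^b-1}\lambda^{-(2^b+1)}=\theta^{2^b-1}$ exact.

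Two smaller points: (i) you invoke Fact~\ref{332fact} in the wrong direction (it says $M^{(b-1)}\mik\Phi$, not the reverse), so you should call Lemma~\ref{3l7} directly, whose height requirement is stated in terms of $M^{(b-1)}$ and therefore matches the definition of $m(b,\theta,\ee)$; and (ii) your claim that $\ee_r\meg 1$ is off by one --- it is $\ee_{r+1}=\ee\lambda^r\meg 1$, so your half-open intervals $[\ee_{j-1},\ee_j)$ for $j\mik r$ need not cover $[\ee,1]$. The paper's intervals $[\ee_i,\ee_{i+1}]$ for $i\in\{1,\dots,r\}$ do.
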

\begin{proof}
The result follows easily by Lemma \ref{3l7} and a stabilization argument. Let us give the details.
For notational simplicity we set $\lambda=\lambda(b,\theta,\ee)$, $r=r(b,\theta,\ee)$ and $m=m(b,\theta,\ee)$.
Also let $\ee_{r+1}=\ee\lambda^r$ and notice that $\ee_{r+1}\meg 1$ by the choice of $r$ in (\ref{3e25}).
Since $\lambda>1$, by (\ref{3e26}), we see that
\[ \ee=\ee_1< \ee_2<...<\ee_{r}< \ee_{r+1}.\]
Let $T$ be a finite homogeneous tree with branching number $b$ and of height at least $\mil(b,m,1,r)$ and $\{A_t:t\in T\}$ be a family
of measurable events in a probability space $(\Omega,\Sigma,\mu)$ satisfying $\mu(A_t)\meg \ee$ for every $t\in T$. There exist a
strong subtree $R$ of $T$ of height $m$ and $i_0\in\{1,...,r\}$ such that for every $t\in R$ we have
\begin{equation} \label{3e29}
\ee_{i_0}\mik \mu\big(A_t\big)\mik\ee_{i_0+1}.
\end{equation}
Therefore, $\mu(A_t)\meg \ee_{i_0}>\ee_{i_0-1}$ and
\[ h(R)=m \stackrel{(\ref{3e27})}{\meg} M^{(b-1)}\Big(b,\Big\lceil\frac{2^b-1}{\ee_{i_0}^{2^b}-\ee_{i_0-1}^{2^b}}\Big\rceil\Big).\]
By Lemma \ref{3l7} applied for ``$\theta=\ee_{i_0-1}$", ``$\ee=\ee_{i_0}$" and the family ``$\{A_t:t\in R\}$",
there exists $F\in\strong_2(R)$ such that
\begin{equation} \label{3e30}
\mu\Big(\bigcap_{t\in F}A_t  \Big)\meg \ee_{i_0-1}^{2^b}.
\end{equation}
By (\ref{3e24}), (\ref{3e26}), (\ref{3e29}) and (\ref{3e30}) and taking into account that $\lambda>1$ and $i_0\meg 1$, we conclude that
\begin{eqnarray*}
\mu\Big(\bigcap_{t\in F(1)}A_t \ \big| \ A_{F(0)}\Big) & = & \frac{\mu\big(\bigcap_{t\in F}A_t\big)}{\mu(A_{F(0)})}
\meg \frac{\ee_{i_0-1}^{2^b}}{\ee_{i_0+1}}= \ee^{2^b-1} \cdot \lambda^{2^b i_0-2^{b+1}-i_0} \\
& \meg & \ee^{2^b-1} \cdot \lambda^{-2^{b}-1} = \theta^{2^b-1}.
\end{eqnarray*}
This shows that $\mathrm{Rel}(b,\theta,\ee) \mik \mil(b,m,1,r)$, as desired.
\end{proof}

\subsection{Comments}

By Fact \ref{332fact} and Lemma \ref{3l7}, the numbers $\mathrm{Cor}(b,2,\theta,\ee)$ are controlled by a primitive recursive function
belonging to the class $\mathcal{E}^8$ of Grzegorczyk's hierarchy. We point out that this upper bound is not optimal and, in fact, we can
have significantly better upper bounds. Precisely, by appropriately modifying the arguments in the proof of Proposition \ref{ip5} (avoiding,
in particular, the use of Milliken's Theorem), it is possible to show the estimate in (\ref{ie9}) is satisfied for the function
$\Psi:\nn^2\to\nn$ defined by
\begin{equation} \label{erem}
\Psi(b,m) = m^{(m+1)^b}.
\end{equation}
Such a modification, however, is technically involved and conceptually less natural to grasp, and so, we prefer to omit it.


\section{Proof of Theorem \ref{it4}}

We fix an integer $b\meg 2$ and a pair of reals $0<\theta<\ee\mik 1$. We will define the numbers $\mathrm{Cor}(b,k,\theta,\ee)$
by recursion on $k$. It is clear that $\mathrm{Cor}(b,1,\theta,\ee)=1$. The definition of the number $\mathrm{Cor}(b,2,\theta,\ee)$
is the content of Proposition \ref{ip5}.

So let $k\in\nn$ with $k\meg 2$ and assume that the number $\mathrm{Cor}(b,k,\theta,\ee)$ has been defined. Let
\begin{equation} \label{4e1}
\eta=\frac{\ee+\theta}{2}
\end{equation}
and set
\begin{equation} \label{4e2}
n(k)= \max\Big\{ \rel(b,\eta,\ee), \mathrm{Cor}(b,k,\theta^{2^b-1},\eta^{2^b-1}) \Big\}+1.
\end{equation}
\begin{claim} \label{4c1}
We have
\begin{equation} \label{4e3}
\mathrm{Cor}(b,k+1,\theta,\ee)\mik \mil(b,n(k),2,2).
\end{equation}
\end{claim}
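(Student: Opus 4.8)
The plan is to set up a one-step recursion on the height of the strong subtree we want to extract, using the inductive hypothesis on $\mathrm{Cor}(b,k,\cdot,\cdot)$ together with the ``relativized'' pigeon-hole principle encoded in the number $\rel(b,\eta,\ee)$. Let $T$ be a finite homogeneous tree with branching number $b$ and of height at least $\mil(b,n(k),2,2)$, and let $\{A_t:t\in T\}$ be a family with $\mu(A_t)\meg\ee$ for every $t\in T$. The first move is to apply Milliken's Theorem (in the finite form, Corollary \ref{tmilfinite}) to pass to a strong subtree $W$ of $T$ of height $n(k)$ that is ``uniform'' with respect to the relevant coloring; concretely, for every $F\in\strong_2(W)$ we want the conditional probability $\mu\big(\bigcap_{t\in F(1)}A_t\mid A_{F(0)}\big)$ to be controlled. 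Since $h(W)=n(k)\meg \rel(b,\eta,\ee)+1 > \rel(b,\eta,\ee)$, Definition \ref{3d9} gives us inside $W$ (indeed inside any strong subtree of $W$ of height $\rel(b,\eta,\ee)$, hence by a further Milliken application we may assume it holds at the root level of $W$ itself) a height-$2$ strong subtree $F_0$ whose root $r=F_0(0)$ satisfies
\[
\mu\Big(\bigcap_{t\in F_0(1)}A_t \ \big| \ A_r\Big)\meg \eta^{2^b-1}.
\]

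Next I would fix the root $r$ and work in the conditional probability space $(\Omega,\Sigma,\mu_{A_r})$. The point of choosing $\eta=(\ee+\theta)/2$ is that $\theta<\eta<\ee$, so the gap $\eta-\theta$ is available to absorb one exponentiation step. Consider the subtree $V$ of $W$ consisting of the node $r$ together with everything above the immediate successor of $r$ chosen in $F_0$ — this is essentially a strong subtree of $W$ of height $n(k)-1\meg \mathrm{Cor}(b,k,\theta^{2^b-1},\eta^{2^b-1})$ with root (one level up) $s=F_0(1)(0)$, say, and for every node $t$ in the portion of $V$ above $s$ we have, from the displayed inequality and monotonicity of the intersection, $\mu_{A_r}(A_t)\meg \eta^{2^b-1}$. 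Actually the cleaner bookkeeping is: the nodes $F_0(1)$ form a strong subtree of height $2$ above $r$; above any one of its leaves we have a strong subtree $V$ of height $n(k)-1$ of the original tree, and for every $t\in V$, $\mu\big(A_t\mid A_r\big)\meg \eta^{2^b-1}$ because $A_t\supseteq$ the intersection $\bigcap_{u\in F_0(1)}A_u$ is false — so here the care is to only use those $t$ lying above the appropriate successor. Let me instead take $V$ to be a strong subtree of $W$ of height $n(k)-1$ all of whose nodes $t$ satisfy $\mu_{A_r}(A_t)\meg\eta^{2^b-1}$; such a $V$ exists by combining the $\rel$ estimate with one more Milliken pass inside $W$, since $h(W)=n(k)$ was chosen exactly so that after removing the root level and stabilizing we retain height $\meg \mathrm{Cor}(b,k,\theta^{2^b-1},\eta^{2^b-1})$.

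Then I would apply the inductive hypothesis: since $h(V)\meg\mathrm{Cor}(b,k,\theta^{2^b-1},\eta^{2^b-1})$ and $\mu_{A_r}(A_t)\meg\eta^{2^b-1}$ for all $t\in V$, there is a strong subtree $S'$ of $V$ of height $k$ with
\[
\mu_{A_r}\Big(\bigcap_{t\in S'}A_t\Big)\meg \big(\theta^{2^b-1}\big)^{p(b,k)}.
\]
Now set $S=\{r\}\cup S'$, arranged so that $r$ sits one level below $S'$; by construction $S$ is a strong subtree of $T$ of height $k+1$. Undoing the conditioning,
\[
\mu\Big(\bigcap_{t\in S}A_t\Big)=\mu(A_r)\cdot\mu_{A_r}\Big(\bigcap_{t\in S'}A_t\Big)\meg \ee\cdot\theta^{(2^b-1)p(b,k)}\meg \theta\cdot\theta^{(2^b-1)p(b,k)}=\theta^{1+(2^b-1)p(b,k)},
\]
and the recursion (\ref{ie2}) for $p$ gives exactly $1+(2^b-1)p(b,k)=p(b,k+1)$, since $p(b,k+1)=\big((2^b-1)^{k+1}-1\big)/(2^b-2)=(2^b-1)p(b,k)+1$. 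This proves the required estimate, hence $\mathrm{Cor}(b,k+1,\theta,\ee)\mik \mil(b,n(k),2,2)$.

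The main obstacle, and the step needing genuine care, is the bookkeeping that allows $S'$ to be attached directly above the root $r$ to form a \emph{strong} subtree of height $k+1$: after fixing the immediate successor of $r$ inside the height-$2$ Shelah-type configuration, the subtree lying above that successor must still be a strong subtree of the ambient homogeneous tree, and its level set must fit together with the singleton level $\{r\}$. This is where the choice $n(k)=\max\{\rel(b,\eta,\ee),\mathrm{Cor}(b,k,\theta^{2^b-1},\eta^{2^b-1})\}+1$ is used: the ``$+1$'' accounts for the one level consumed by $r$, and the max ensures that whichever of the two pigeon-hole principles (the relativized one for the first level, the inductive $\mathrm{Cor}$ for the remaining $k$ levels) is invoked, the residual height suffices. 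The Milliken applications that stabilize the conditional probabilities $\mu(A_t\mid A_r)$ must also be interleaved correctly with the choice of $r$; doing this cleanly is the only delicate point, and it is handled by first running Milliken to make $W$ uniform for the relevant $2$-coloring and only then selecting $r$.
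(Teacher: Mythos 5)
Your exponent arithmetic and the opening move (one Milliken pass to stabilize the coloring by $\mu\big(\bigcap_{t\in F(1)}A_t \mid A_{F(0)}\big)\meg\eta^{2^b-1}$, then using $n(k)\meg\rel(b,\eta,\ee)$ to force the good alternative) are in line with the paper, but the final assembly has a genuine gap: the set $S=\{r\}\cup S'$ you produce is not a strong subtree of $T$. Since $S'$ is a uniquely rooted strong subtree of height $k$ sitting above a single immediate successor of $r$, the node $r$ has exactly one immediate successor in $S$; condition (c) in the definition of a strong subtree requires $r$ to have $b\meg 2$ immediate successors in $S$, one above each element of $\immsuc_T(r)$. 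This is not the level-set bookkeeping issue you flag at the end: a height-$(k+1)$ strong subtree rooted at $r$ must branch $b$ ways at $r$ (and at every internal node), so it necessarily contains nodes above all $b$ immediate successors of $r$, while your inductive application --- made to the individual events $A_t$ for $t$ in a subtree above one chosen successor --- gives no control on the events indexed by the other $b-1$ branches. Running the induction separately in each branch would not repair this either, since you would then need a lower bound for the intersection of $b$ sets each of which is only known individually to have $\mu_{A_r}$-measure at least $\theta^{(2^b-1)p(b,k)}$.

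The paper resolves exactly this point by bundling the $b$ branches \emph{before} invoking the inductive hypothesis. With $R$ the Milliken-stabilized tree of height $n(k)$ and $r_0\lex\dots\lex r_{b-1}$ its first level, set $Y=A_{R(0)}$ and, for each $u\in b^{<n(k)-1}$, let $B_u=\bigcap_{t\in F_u}A_t$ where $F_u=\{R(0)\}\cup\{\mathrm{I}_i(u): 0\mik i\mik b-1\}$ and $\mathrm{I}_i$ is the canonical isomorphism of $b^{<n(k)-1}$ onto $\suc_R(r_i)$. Since $\strong_2(R)\subseteq\mathcal{F}$, every $B_u$ satisfies $\mu_Y(B_u)\meg\eta^{2^b-1}$, and the inductive hypothesis applied in the space $(\Omega,\Sigma,\mu_Y)$ to the family $\{B_u: u\in b^{<n(k)-1}\}$, indexed by the \emph{single} homogeneous tree $b^{<n(k)-1}$ of height $n(k)-1\meg\mathrm{Cor}(b,k,\theta^{2^b-1},\eta^{2^b-1})$, yields a strong subtree $U$ of height $k$ with $\mu_Y\big(\bigcap_{u\in U}B_u\big)\meg\theta^{(2^b-1)p(b,k)}$. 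The desired tree is $S=\{R(0)\}\cup\{\mathrm{I}_i(u): u\in U,\ 0\mik i\mik b-1\}$, which branches correctly at the root and at every internal node because the same $U$ is copied simultaneously into all $b$ subtrees $\suc_R(r_i)$, whose level sets coincide. This joint indexing is the missing idea in your argument, and it is also what makes a single inequality control all of $\bigcap_{t\in S}A_t$ at once; the concluding computation $\mu\big(\bigcap_{t\in S}A_t\big)=\mu(A_{R(0)})\cdot\mu_Y\big(\bigcap_{u\in U}B_u\big)\meg\ee\cdot\theta^{(2^b-1)p(b,k)}\meg\theta^{p(b,k+1)}$ then matches the exponent identity you already verified.
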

It is, of course, clear that Theorem \ref{it4} follows by Claim \ref{4c1}. So, what remains is to prove
Claim \ref{4c1}. To this end let $T$ be a finite homogeneous tree with branching number $b$ such that
\begin{equation} \label{4e4}
h(T)\meg \mil(b,n(k),2,2)
\end{equation}
and a family $\{A_t: t\in T\}$ of measurable events in a probability measure space $(\Omega,\Sigma,\mu)$ satisfying
$\mu(A_t)\meg\ee$ for every $t\in T$. We need to find a strong subtree $S$ of $T$ of height $k+1$ such that
\begin{equation} \label{4e5}
\mu\Big( \bigcap_{t\in S} A_t\Big) \meg \theta^{p(b,k+1)}.
\end{equation}

We argue as follows. First we set
\begin{equation} \label{4e6}
\mathcal{F} =\Big\{ F\in\strong_2(T) : \mu\Big( \bigcap_{t\in F(1)} A_t \ \big| \ A_{F(0)} \Big)\meg\eta^{2^b-1}\Big\}.
\end{equation}
By Corollary \ref{tmilfinite} and the estimate in (\ref{4e4}), there exists a strong subtree $R$ of $T$ of height
$n(k)$ such that either $\strong_2(R)\subseteq \mathcal{F}$ or $\strong_2(R)\cap\mathcal{F}=\varnothing$. By the choice of
$n(k)$ made in (\ref{4e2}), we have $n(k)\meg \rel(b,\eta,\ee)$. It follows that $\strong_2(R)\subseteq \mathcal{F}$.

Let $\{r_0<_{\mathrm{lex}}...<_{\mathrm{lex}} r_{b-1}\}$ be the lexicographical increasing enumeration of the $1$-level $R(1)$ of $R$.
Since the height of $R$ is $n(k)$, for every $i\in\{0,...,b-1\}$ we have that $\suc_{R}(r_i)$ is a strong subtree of $R$
of height $n(k)-1$. In particular, $\suc_{R}(r_i)$ is a finite homogeneous tree with branching number $b$ and of height
$n(k)-1$. This observation permits us to consider the canonical isomorphism $\mathrm{I}\big(b^{<n(k)-1},\suc_{R}(r_i)\big)$
between $b^{<n(k)-1}$ and $\suc_{R}(r_i)$. For notational simplicity we shall denote it by $\mathrm{I}_i$.

We set
\begin{equation} \label{e47reallynew}
Y=A_{R(0)}.
\end{equation}
Also, for every $u\in b^{<n(k)-1}$ let
\begin{equation} \label{4e7}
F_u=\{R(0)\} \cup \big\{ \mathrm{I}_i(u): i\in \{0,...,b-1\}\big\}
\end{equation}
and define
\begin{equation} \label{4e8}
B_u=\bigcap_{t\in F_u} A_t\in \Sigma.
\end{equation}
Observe that $F_u\in\strong_2(R)$ with $F_u(1)=\big\{ \mathrm{I}_0(u),...,\mathrm{I}_{b-1}(u)\big\}$ and $F_u(0)=R(0)$.
Since $\strong_2(R)\subseteq\mathcal{F}$, we get that
\begin{equation} \label{4e9}
\mu_Y(B_u) = \frac{\mu(B_u\cap A_{R(0)})}{\mu(A_{R(0)})}= \mu\Big( \bigcap_{t\in F_u(1)}A_t \ \big| \ A_{F_u(0)}\Big)\meg \eta^{2^b-1}.
\end{equation}
Moreover, by (\ref{4e2}), we have $n(k)-1\meg \mathrm{Cor}(b,k,\theta^{2^b-1},\eta^{2^b-1})$. Therefore, applying our inductive
assumptions to the probability space ``$(\Omega,\Sigma,\mu_Y)$" and the family of measurable events ``$\{B_u:u\in b^{<n(k)-1}\}$",
we may find a strong subtree $U$ of $b^{<n(k)-1}$ of height $k$ such that
\begin{equation} \label{4e10}
\mu_Y\Big(\bigcap_{u\in U} B_u\Big) \meg \big(\theta^{2^b-1}\big)^{p(b,k)}.
\end{equation}

We are now in the position to define the desired tree $S$. In particular, let
\begin{equation} \label{4e11}
S=\{R(0)\} \cup \big\{ \mathrm{I}_i(u): u\in U \text{ and } i\in\{0,...,b-1\}\big\}.
\end{equation}
It is easy to see that $S$ is a strong subtree of $T$ of height $k+1$ and with the same root as $R$. Moreover,
\begin{eqnarray} \label{4e12}
\mu\Big(\bigcap_{t\in S}A_t\Big) & \stackrel{(\ref{4e11})}{=} & \mu\Big(A_{R(0)}\cap\bigcap_{u\in U} \bigcap_{i=0}^{b-1} A_{\mathrm{I}_i(u)}\Big)
\stackrel{(\ref{4e7})}{=} \mu\Big( \bigcap_{u\in U} \bigcap_{t\in {F_u}} A_t \Big) \\
& \stackrel{(\ref{4e8})}{=} & \mu\Big(\bigcap_{u\in U} B_u\Big) = \mu\Big( A_{R(0)}\cap \bigcap_{u\in U} B_u\Big) \nonumber \\
& = & \mu(A_{R(0)}) \cdot \mu_Y\Big(\bigcap_{u\in U} B_u\Big) \nonumber  \\
& \stackrel{(\ref{4e10})}{\meg} & \ee\cdot \theta^{(2^b-1)p(b,k)} \meg \theta^{1+(2^b-1)p(b,k)}. \nonumber
\end{eqnarray}
Finally, notice that $p(b,k)=\sum_{i=0}^{k-1}(2^b-1)^i$. Therefore,
\begin{equation} \label{4e13}
1+(2^b-1)p(b,k)=1+\sum_{i=1}^k(2^b-1)^i=\sum_{i=0}^k(2^b-1)^i=p(b,k+1).
\end{equation}
Combining (\ref{4e12}) and (\ref{4e13}), we conclude that the estimate in (\ref{4e5}) is satisfied for the tree $S$.
This completes the proof of Claim \ref{4c1}, and as we have already indicated, the proof of Theorem \ref{it4} is also completed.


\section{Proof of Theorem \ref{it1} and its consequences}

This section is devoted to the proofs of Theorem \ref{it1} and Corollary \ref{ic2} stated in the introduction. We start with the following
lemma which is essentially a multidimensional version of Corollary \ref{relcor1prop}.
\begin{lem} \label{newlemma22}
Let $b\in\nn$ with $b\meg 2$ and $\bfct=(T_1,...,T_d)$ be a vector homogeneous tree such that $b_{T_i}=b$ for every $i\in\{1,...,d\}$.
Also let $\big\{A_t: t\in T_i \text{ and } i\in\{1,...,d\}\big\}$ be a family of measurable events in a probability space
$(\Omega,\Sigma,\mu)$ and $Y\in\Sigma$ with $\mu(Y)>0$ such that for every $(t_1,...,t_d)$ in the level product of
$\bfct$ we have
\begin{equation} \label{e51}
\mu\Big( \bigcap_{i=1}^dA_{t_i} \ \big| \ Y\Big)\meg\ee>0.
\end{equation}
Then for every $0<\theta<\ee$ there exists a vector strong subtree $\bfcs$ of $\bfct$ of infinite height such that for every
$(F_1,...,F_d)\in\strong_2(\bfcs)$ we have
\begin{equation} \label{e52}
\mu\Big( \bigcap_{i=1}^d \bigcap_{t\in F_i(1)} A_t \ \big| \ Y\cap\bigcap_{i=1}^d A_{F_i(0)}\Big)\meg\theta^{2^b-1}.
\end{equation}
\end{lem}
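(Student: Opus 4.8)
The plan is to reduce this multidimensional statement to the one-dimensional relativized Proposition \ref{relcor1prop} (more precisely, its quantitative form via Lemma \ref{3l7}) combined with the vector version of Milliken's Theorem (Theorem \ref{t22}). The basic idea is that the hypothesis \eqref{e51} says that the single family of events $\{B_{(t_1,\dots,t_d)} := \bigcap_{i=1}^d A_{t_i} : (t_1,\dots,t_d) \in \otimes\bfct\}$, indexed by the level product of $\bfct$, has conditional probability (relative to $Y$) at least $\ee$. So the rough strategy is to run the finite pigeon-hole step (Lemma \ref{3l7}/Proposition \ref{ip5}) on each ``level block'' and then stabilize using Milliken.

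First I would set up the coloring. For a vector strong subtree $\bfcf = (F_1,\dots,F_d) \in \strong_2(\bfct)$, both the root level and the first level of the $F_i$ lie in a common level of the respective $T_i$ (since $\bfcf$ is a \emph{vector} strong subtree, so $L_{T_1}(F_1) = \dots = L_{T_d}(F_d)$). Color $\bfcf$ according to whether
\[
\mu\Big( \bigcap_{i=1}^d \bigcap_{t\in F_i(1)} A_t \ \big| \ Y\cap\bigcap_{i=1}^d A_{F_i(0)}\Big)\meg\theta^{2^b-1}
\]
holds or not. By Theorem \ref{t22} applied to $\strong_2(\bfct)$ (which is a Borel, indeed clopen, condition), there is a vector strong subtree $\bfcs$ of $\bfct$ of infinite height on which this coloring is constant. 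The whole content of the lemma is then to rule out the ``bad'' color, i.e.\ to show that \emph{some} vector strong 2-subtree of $\bfct$ satisfies \eqref{e52}; once one does, homogeneity of the color on $\bfcs$ forces all of $\strong_2(\bfcs)$ to be good. To produce one good $\bfcf$, I would pass to a large \emph{finite} initial vector subtree of $\bfct$ of height $N$ (with $N$ chosen below), identify each $T_i \upharpoonright (N-1)$ with $b^{<N}$ via the canonical isomorphism, and think of the level product as $b^{<N}$ with the events $B_u = \bigcap_{i=1}^d A_{(\mathrm I_i(u))}$ at node $u$, which satisfy $\mu(B_u \mid Y) \meg \ee$. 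Applying the relativized finite result --- that is, the argument behind Corollary \ref{relcor1prop} / Lemma \ref{3l7} with $\mu$ replaced by $\mu_Y$ --- to this single family over $b^{<N}$ (for $N$ large enough, namely at least the Milliken-type bound appearing in Corollary \ref{relcor1prop} applied to $\ee$, $\theta$, $b$) yields a strong 2-subtree $G$ of $b^{<N}$ with $\mu_Y\big(\bigcap_{t\in G(1)} B_t \mid B_{G(0)}\big) \meg \theta^{2^b-1}$. Pulling $G$ back through the canonical isomorphisms gives, in each coordinate $i$, a strong 2-subtree $F_i$ of $T_i$, and since all these share the same level set (inherited from $G$), $(F_1,\dots,F_d)$ is a genuine vector strong 2-subtree of $\bfct$; unwinding the definition of $B_u$ shows it satisfies \eqref{e52}. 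This is the required good $\bfcf$, completing the argument.

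The main obstacle I anticipate is bookkeeping rather than a genuine mathematical difficulty: one must check carefully that the canonical isomorphisms $\mathrm I_i : b^{<N} \to T_i \upharpoonright (N-1)$ match up level-by-level so that applying ``the same'' strong 2-subtree $G$ of $b^{<N}$ in all $d$ coordinates produces subtrees with identical level sets $L_{T_i}(F_i)$, which is exactly the condition needed for $(F_1,\dots,F_d)$ to be a vector strong subtree. There is also a minor point in that Lemma \ref{3l7} and Corollary \ref{relcor1prop} were stated for an unconditional measure $\mu$ on a homogeneous tree; one has to observe that their proofs go through verbatim with $\mu$ replaced by the conditional measure $\mu_Y$ and the events $A_t$ replaced by the ``product'' events $B_u$, since those proofs only use the abstract probabilistic facts (Lemma \ref{measure}, generalized Shelah lines, Milliken's finite theorem) applied to whatever family of events and whatever probability measure one feeds in. Modulo these routine verifications, the reduction is clean and the key inputs are precisely Corollary \ref{relcor1prop} (for the finite one-step extension) and Theorem \ref{t22} (to upgrade from one good vector 2-subtree to all of $\strong_2(\bfcs)$).
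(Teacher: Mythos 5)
Your strategy coincides with the paper's: stabilize the natural two-coloring of height-$2$ vector strong subtrees via Theorem \ref{t22}, and produce one good witness by collapsing the $d$ coordinates to a single family of product events $B_u$ on a copy of $b^{<\nn}$ and invoking the relativized one-dimensional result (Corollary \ref{relcor1prop}). However, there is one genuine logical slip in how you combine the two halves. After applying Theorem \ref{t22} to obtain the color-homogeneous vector strong subtree $\bfcs$, you propose to exhibit a good height-$2$ vector strong subtree \emph{of} $\bfct$, built inside the finite initial vector subtree $(T_1\upharpoonright (N-1),\dots,T_d\upharpoonright (N-1))$, and then to conclude that the constant color on $\strong_2(\bfcs)$ must be the good one. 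That inference fails: the good $(F_1,\dots,F_d)$ you construct need not belong to $\strong_2(\bfcs)$, and homogeneity of the coloring on $\bfcs$ says nothing about $2$-subtrees lying outside $\bfcs$. The witness must be produced inside $\bfcs$ itself. This is exactly what the paper does: it sets $B_u=\bigcap_{i=1}^d A_{\mathrm{I}_i(u)}\cap Y$ where $\mathrm{I}_i=\mathrm{I}(b^{<\nn},S_i)$ is the canonical isomorphism onto the $i$-th coordinate of $\bfcs$ (not of $\bfct$), observes that $\mu_Y(B_u)\meg\ee$ because the level product of $\bfcs$ is contained in that of $\bfct$, applies Corollary \ref{relcor1prop} to $(\Omega,\Sigma,\mu_Y)$ to get some $N\mik\rel(b,\theta,\ee)$ and $F\in\strong_2(b^{<N})$ with the conditional estimate $\theta^{2^b-1}$, and pulls $F$ back through the maps $\mathrm{I}_i$ to obtain $(F_1,\dots,F_d)\in\strong_2(\bfcs)$ satisfying (\ref{e52}). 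With that relocation of the finite search from $\bfct$ to $\bfcs$, your argument becomes the paper's proof essentially verbatim.

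Two smaller remarks. The ``minor point'' you flag about re-running Lemma \ref{3l7} and Corollary \ref{relcor1prop} with $\mu$ replaced by $\mu_Y$ is unnecessary: both statements are formulated for an arbitrary probability space, so one simply applies Corollary \ref{relcor1prop} to the space $(\Omega,\Sigma,\mu_Y)$ and the events $B_u$ (the paper absorbs $Y$ into $B_u$, which changes nothing under $\mu_Y$). Also, no Borel-ness of the coloring is needed: the ``in particular'' clause of Theorem \ref{t22} handles arbitrary subsets of $\strong_2(\bfct)$. Your bookkeeping concern about the canonical isomorphisms is indeed routine: since the coordinates of a vector homogeneous tree share a common level set, the images $F_i=\mathrm{I}_i(F)$ automatically have equal level sets, so $(F_1,\dots,F_d)$ is a genuine vector strong subtree.
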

\begin{proof}
We fix $0<\theta<\ee$. Let $\mathcal{F}$ be the set of all vector strong subtrees of $\bfct$ of height $2$ for which the estimate in
(\ref{e52}) is satisfied for the fixed constant $\theta$. By Theorem \ref{t22}, there exists vector strong subtree $\bfcs=(S_1,..., S_d)$
of $\bfct$ of infinite height such that either $\strong_2(\bfcs)\subseteq \mathcal{F}$ or $\strong_2(\bfcs)\cap \mathcal{F}=\varnothing$.
The proof will, of course, be completed once we show that $\strong_2(\bfcs)\cap\mathcal{F}\neq\varnothing$.

To this end we argue as follows. For every $u\in b^{<\nn}$ we set
\begin{equation} \label{e53}
B_u=\bigcap_{i=1}^d A_{\mathrm{I}_i(u)}\cap Y \in \Sigma
\end{equation}
where $\mathrm{I}_i$ stands for the canonical isomorphism $\mathrm{I}(b^{<\nn},S_i)$ between $b^{<\nn}$ and $S_i$ for every
$i\in\{1,...,d\}$. By (\ref{e51}), we have $\mu_Y(B_u)\meg\ee$ for every $u\in b^{<\nn}$. Therefore, by Corollary \ref{relcor1prop},
there exist an integer $N\mik \mathrm{Rel}(b,\theta,\ee)$ and $F\in\strong_2(b^{<N})$ such that
\begin{equation} \label{e54}
\mu_Y \Big(\bigcap_{u\in F(1)}B_u \ \big| \ B_{F(0)}\Big)\meg \theta^{2^b-1}.
\end{equation}
For every $i\in\{1,...,d\}$ we set $F_i=\mathrm{I}_i(F)$. Notice that $(F_1,...,F_d)\in\strong_2(\bfcs)$. Moreover,
\begin{eqnarray*}
\mu\Big( \bigcap_{i=1}^d\bigcap_{t\in F_i(1)}A_t \ \big| \ Y\cap\bigcap_{i=1}^d A_{F_i(0)}\Big) & = &
\frac{\mu\big(\bigcap_{i=1}^d\bigcap_{t\in F_i}A_t\cap Y\big)}{\mu\big(Y\cap\bigcap_{i=1}^dA_{F_i(0)}\big)} =
\frac{\mu\big(\bigcap_{u\in F}B_u\big)}{\mu(B_{F(0)})}\\
& = & \frac{\mu_Y\big(\bigcap_{u\in F}B_u \big)}{\mu_Y( B_{F(0)})} =
\mu_Y\Big( \bigcap_{u\in F(1)}B_u \ \big| \ B_{F(0)}\Big).
\end{eqnarray*}
By (\ref{e54}) and the above equalities, we conclude that $(F_1,...,F_d)\in\strong_2(\bfcs)\cap\mathcal{F}$
and the proof is completed.
\end{proof}
The following lemma is the final step of the proof of Theorem \ref{it1}. It shows that, under the assumptions of Theorem \ref{it1},
we can control the joint probability of the events over all initially subtrees of an appropriately chosen strong subtree of $T$.
\begin{lem} \label{constructing a good subtree}
Let $T$ be a homogeneous tree. Also let $\{A_t:t\in T\}$ be a family of measurable events in a probability space $(\Omega,\Sigma,\mu)$
satisfying $\mu(A_t)\meg\ee>0$ for every $t\in T$. Then for every $0<\theta<\ee$ there exists a strong subtree $W$ of $T$ of infinite
height such that for every $k\in\nn$ we have
\begin{equation} \label{e55}
\mu\Big(\bigcap_{t\in W\upharpoonright k}A_t\Big) \meg \theta^{p(b_T,k+1)}.
\end{equation}
\end{lem}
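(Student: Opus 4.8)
The plan is to construct $W$ by an $\omega$-step recursion in which every step performs a single ``diagonal collapse'' of the $b_T$ cones hanging over one node and then passes to a fresh instance of the same problem with a strictly worse constant; because the constants deteriorate at exactly the geometric rate $2^{b_T}-1$, the telescoped product will telescope to $\theta^{p(b_T,k+1)}$.

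Write $b=b_T$ and set $\theta^{(0)}=\theta$, $\varepsilon^{(0)}=\varepsilon$, and recursively $\eta^{(m)}=(\theta^{(m)}+\varepsilon^{(m)})/2$, $\theta^{(m+1)}=(\theta^{(m)})^{2^b-1}$, $\varepsilon^{(m+1)}=(\eta^{(m)})^{2^b-1}$; an easy induction gives $0<\theta^{(m)}<\varepsilon^{(m)}\mik 1$ and $\theta^{(m)}=\theta^{(2^b-1)^m}$ for every $m$. Starting from $D^{(0)}=T$, $A^{(0)}_t=A_t$ and $\mu^{(0)}=\mu$ (for which the invariant below is just the hypothesis $\mu(A_t)\meg\varepsilon$), I would recursively produce a homogeneous tree $D^{(m)}$ of branching number $b$, a family $\{A^{(m)}_v:v\in D^{(m)}\}$ of measurable events in $(\Omega,\Sigma)$, a probability measure $\mu^{(m)}$ on $(\Omega,\Sigma)$, a strong subtree $S^{(m)}$ of $D^{(m)}$ of infinite height and the root $w_m$ of $S^{(m)}$, maintaining the invariant $\mu^{(m)}(A^{(m)}_v)\meg\varepsilon^{(m)}$ for every $v\in D^{(m)}$. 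The passage $m\to m+1$: apply Lemma \ref{newlemma22} with $d=1$, ambient probability space $(\Omega,\Sigma,\mu^{(m)})$, $Y=\Omega$ and target constant $\eta^{(m)}$ to obtain $S^{(m)}$ with $\mu^{(m)}\big(\bigcap_{t\in F(1)}A^{(m)}_t\mid A^{(m)}_{F(0)}\big)\meg(\eta^{(m)})^{2^b-1}$ for every $F\in\strong_2(S^{(m)})$; let $C^{(m)}_1,\dots,C^{(m)}_b$ be the cones $\suc_{S^{(m)}}(s)$ with $s\in S^{(m)}(1)$, put $\mathrm{I}^{(m)}_i=\mathrm{I}(b^{<\nn},C^{(m)}_i)$, and set $D^{(m+1)}=b^{<\nn}$, $A^{(m+1)}_v=\bigcap_{i=1}^b A^{(m)}_{\mathrm{I}^{(m)}_i(v)}$ and $\mu^{(m+1)}=(\mu^{(m)})_{A^{(m)}_{w_m}}$. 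For every $v\in D^{(m+1)}$ the set $F_v=\{w_m\}\cup\{\mathrm{I}^{(m)}_i(v):1\mik i\mik b\}$ lies in $\strong_2(S^{(m)})$ (its root is $w_m$, and it has exactly one level-$1$ node in each cone $C^{(m)}_i$), so
\[
\mu^{(m+1)}(A^{(m+1)}_v)=\mu^{(m)}\Big(\bigcap_{i=1}^b A^{(m)}_{\mathrm{I}^{(m)}_i(v)}\ \Big|\ A^{(m)}_{w_m}\Big)\meg(\eta^{(m)})^{2^b-1}=\varepsilon^{(m+1)},
\]
which is the invariant at level $m+1$ and feeds the recursion.

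After running the recursion I would define $W$ by
\[
W(n)=\big\{\big(\mathrm{I}^{(0)}_{i_0}\circ\mathrm{I}^{(1)}_{i_1}\circ\cdots\circ\mathrm{I}^{(n-1)}_{i_{n-1}}\big)(w_n):(i_0,\dots,i_{n-1})\in\{1,\dots,b\}^n\big\},
\]
so that $|W(n)|=b^n$. Using that every $S^{(m)}$ is a strong subtree and every $\mathrm{I}^{(m)}_i$ is a canonical isomorphism (hence level- and $\sqsubset$-preserving), one checks that $W$ is a strong subtree of $T$ of infinite height. The defining relation $\bigcap_{i=1}^b A^{(m)}_{\mathrm{I}^{(m)}_i(v)}=A^{(m+1)}_v$, applied $m$ times, collapses $\bigcap_{t\in W(m)}A_t$ to $A^{(m)}_{w_m}$, so $\bigcap_{t\in W\upharpoonright k}A_t=\bigcap_{m=0}^k A^{(m)}_{w_m}$. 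Telescoping along the identities $\mu^{(m+1)}=(\mu^{(m)})_{A^{(m)}_{w_m}}$ and using $\mu^{(m)}(A^{(m)}_{w_m})\meg\varepsilon^{(m)}\meg\theta^{(m)}$ gives
\[
\mu\Big(\bigcap_{t\in W\upharpoonright k}A_t\Big)=\prod_{m=0}^k\mu^{(m)}\big(A^{(m)}_{w_m}\big)\meg\prod_{m=0}^k\theta^{(m)}=\theta^{\sum_{m=0}^k(2^b-1)^m}=\theta^{p(b_T,k+1)},
\]
which is exactly (\ref{e55}).

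The only point demanding real care is the bookkeeping just invoked: I would have to verify that the level sets of the trees $S^{(m)}$ and the composed canonical isomorphisms fit together so that $W$ is genuinely a strong subtree of $T$ — in particular, that each $W(n)$ is contained in a single level of $T$ and that the immediate-successor condition holds relative to $T$ itself rather than merely to the intermediate trees — and that the collapse identity telescopes precisely as stated. Everything else is manipulation of the auxiliary constants. The sole substantive ingredient is Lemma \ref{newlemma22} (equivalently, Corollary \ref{relcor1prop} together with Milliken's Theorem), invoked once per step with $d=1$ to promote the pointwise bound $\mu^{(m)}(A^{(m)}_v)\meg\varepsilon^{(m)}$ into the one-level conditional bound that survives the collapse.
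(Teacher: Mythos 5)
Your argument is correct, and it reaches the bound $\theta^{p(b_T,k+1)}$ by essentially the same iteration of the two-level conditional estimate, but the bookkeeping is organized differently from the paper's. The paper builds $W$ by a fusion inside $T$ itself: it keeps the original events and measure, constructs strong subtrees $R_k$ with $R_{k+1}\upharpoonright k=R_k\upharpoonright k$, and at stage $k$ must refine the $b^{k+1}$ cones over $R_k(k+1)$ \emph{simultaneously} with a common level set, conditioning on $Y=\bigcap_{t\in R_k\upharpoonright k}A_t$; this is exactly why it needs Lemma \ref{newlemma22} in its full vector form with $d=b^{k+1}$ growing at each step. You instead perform a collapse after every step: the new events $A^{(m+1)}_v=\bigcap_{i=1}^b A^{(m)}_{\mathrm{I}^{(m)}_i(v)}$ indexed by a fresh copy of $b^{<\nn}$, together with the conditional measure $\mu^{(m+1)}=(\mu^{(m)})_{A^{(m)}_{w_m}}$, absorb the growing dimension, so only the $d=1$ instance of Lemma \ref{newlemma22} (i.e.\ Corollary \ref{relcor1prop} plus one-dimensional Milliken) is ever invoked, and the common-level-set issue disappears because a single subtree $S^{(m)}\subseteq b^{<\nn}$ is transported to all $b^m$ branches by canonical isomorphisms. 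What you buy is that the multidimensional machinery is not needed for this lemma; what you pay is the verification you flagged, and it does go through: each $F_v=\{w_m\}\cup\{\mathrm{I}^{(m)}_i(v):1\mik i\mik b\}$ is indeed in $\strong_2(S^{(m)})$ since the cones over $S^{(m)}(1)$ share level sets; the composed maps are canonical embeddings (in particular they preserve infima, which is the point needed to get the immediate-successor condition for $W$ relative to $T$, not just level- and order-preservation); and unfolding the recursion gives the set identity $A^{(m)}_{w_m}=\bigcap_{t\in W(m)}A_t$, so the chain rule yields $\mu\big(\bigcap_{t\in W\upharpoonright k}A_t\big)=\prod_{m=0}^k\mu^{(m)}(A^{(m)}_{w_m})\meg\prod_{m=0}^k\theta^{(2^b-1)^m}=\theta^{p(b_T,k+1)}$. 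Your midpoint recursion for the constants plays the role of the paper's choice of $(\delta_k)$ with $\prod_k(1-\delta_k)\meg\theta/\ee$; both give the same exponent.
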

\begin{proof}
We fix $0<\theta<\ee$. Let us denote by $b$ the branching number of $T$. We set
\begin{equation} \label{e5new1}
\alpha=2^b-1.
\end{equation}
We select a sequence $(\delta_k)$ of reals in the interval $(0,1)$ satisfying
\begin{equation} \label{e56}
\prod_{k\in\nn} (1-\delta_k)\meg\frac{\theta}{\ee}.
\end{equation}
Also let $(\ee_k)$ be the sequence of positive reals defined by the rule
\begin{equation} \label{e57}
\left\{ \begin{array} {l} \ee_0=\ee, \\ \ee_{k+1}=\big(\ee_k(1-\delta_k)\big)^{\alpha}. \end{array} \right.
\end{equation}
We isolate, for future use, the following elementary fact. The proof is left to the interested reader.
\begin{fact} \label{f5}
For every integer $k\meg 1$ we have
\begin{equation} \label{e5new2}
\prod_{i=0}^k \ee_i = \big(\ee^{\sum_{j=0}^k \alpha^j} \big) \cdot
\prod_{i=0}^{k-1} \Big( (1-\delta_i)^{\sum_{j=1}^{k-i} \alpha^j} \Big).
\end{equation}
\end{fact}
Recursively we will select a sequence $(R_k)$ of strong subtrees of $T$ of infinite height such that for every $k\in\nn$
the following conditions are satisfied.
\begin{enumerate}
\item[(C1)] The tree $R_{k+1}$ is a strong subtree of $R_k$.
\item[(C2)] We have $R_{k+1}\upharpoonright k=R_k\upharpoonright k$.
\item[(C3)] We have $\mu\big(\bigcap_{t\in R_k\upharpoonright k} A_t\big)\meg \prod_{i=0}^k \ee_i$.
\item[(C4)] If $\{r_1^k<_{\text{lex}}...<_{\text{lex}} r^k_{b^{k+1}}\}$ is the lexicographical increasing enumeration of
the $(k+1)$-level $R_k(k+1)$ of $R_k$, then for every $(t_1,...,t_{b^{k+1}})$ in the level product of
$\big(\suc_{R_k}(r^k_1),...,\suc_{R_k}(r^k_{b^{k+1}})\big)$ we have
\begin{equation} \label{e57split}
\mu\Big(\bigcap_{i=1}^{b^{k+1}}A_{t_i} \ \big| \ \bigcap_{t\in R_k\upharpoonright k}A_{t}\Big)\meg\ee_{k+1}.
\end{equation}
\end{enumerate}
The recursive selection is somewhat lengthy, and so, we will briefly comment on it for the benefit of the reader.
Conditions (C1) and (C2) are natural and quite common in constructions of this sort. We are mainly interested in condition (C3).
It will be used, later on, to complete the proof of the lemma. Condition (C4) is a technical one. It will be used to show
that the recursive selection can be carried out.

We proceed to the details. For $k=0$ we apply Lemma \ref{newlemma22} for ``$\bfct=(T)$", ``$Y=\Omega$" and
``$\theta=\ee(1-\delta_0)$" and we find a strong subtree $S$ of $T$ of infinite height such that for every $F\in\strong_2(S)$ we have
\begin{equation} \label{e58}
\mu\Big( \bigcap_{t\in F(1)}A_t \ \big| \ A_{F(0)}\Big)\meg\big(\ee(1-\delta_0)\big)^{\alpha}.
\end{equation}
We set ``$R_0=S$" and we observe that with this choice condition (C3) is satisfied. To see that condition (C4) is satisfied,
let $\{r^0_1<_{\text{lex}}...<_{\text{lex}}r^0_b\}$ be the lexicographical increasing enumeration of $R_0(1)$ and fix an
element $(t_1,...,t_{b})$ in the level product of $\big(\suc_{R_0}(r^0_1),...,\suc_{R_0}(r^0_b)\big)$. We set
$F=\{R_0(0)\}\cup\{t_1,...,t_b\}$ and we notice that $F\in\strong_2(R_0)=\strong_2(S)$, $F(0)=R_0(0)$ and $F(1)=\{t_1,...,t_b\}$.
By (\ref{e58}) and taking into account the previous observations and the choice of $\ee_1$ made in (\ref{e57}), we conclude
that condition (C4) is also satisfied. Since conditions (C1) and (C2) are meaningless in this case, the first step of the
recursive selection is completed.

Let $k\in\nn$ and assume that the recursive selection has been carried out up to $k$ so that conditions (C1)-(C4)
are satisfied. Let $\{r^k_1<_{\text{lex}}...<_{\text{lex}} r^k_{b^{k+1}}\}$ be the lexicographical increasing enumeration
of $R_k(k+1)$. Notice that conditions (C3) and (C4) allow us to apply Lemma \ref{newlemma22} for
``$(T_1,...,T_d)=\big(\suc_{R_k}(r^k_1),...,\suc_{R_k}(r^k_{b^{k+1}})\big)$", ``$Y=\bigcap_{t\in R_k\upharpoonright k}A_t$",
``$\ee=\ee_{k+1}$" and ``$\theta=\ee_{k+1}(1-\delta_{k+1})$". Hence, there exists a vector strong subtree $\bfcs=(S_1,..., S_{b^{k+1}})$
of $\big(\suc_{R_k}(r^k_1),...,\suc_{R_k}(r^k_{b^{k+1}})\big)$ of infinite height such that for every
$(F_1,..., F_{b^{k+1}})\in\strong_2(\bfcs)$ we have
\begin{equation} \label{e59}
\mu\Big( \bigcap_{i=1}^{b^{k+1}} \bigcap_{t\in F_i(1)} A_t \ \big| \ \bigcap_{t\in R_k\upharpoonright k}
A_t\cap\bigcap_{i=1}^{b^{k+1}} A_{F_i(0)}\Big) \meg \big(\ee_{k+1}(1-\delta_{k+1})\big)^{\alpha}.
\end{equation}
We set
\begin{equation} \label{e510}
R_{k+1}=(R_k\upharpoonright k) \cup\bigcup_{i=1}^{b^{k+1}} S_i
\end{equation}
and we claim that with this choice conditions (C1)-(C4) are satisfied. Indeed, it is clear that $R_{k+1}$ is a strong subtree of $R_k$
and  $R_{k+1}\upharpoonright k=R_k\upharpoonright k$. Thus, conditions (C1) and (C2) are satisfied. To see that condition (C3)
is satisfied, notice first that
\begin{equation} \label{e511}
R_{k+1}(k+1)=\{ S_1(0)<_{\text{lex}}...<_{\text{lex}} S_{b^{k+1}}(0)\}.
\end{equation}
Since $(S_1,..., S_{b^{k+1}})$ is a vector strong subtree of $\big(\suc_{R_k}(r^k_1),...,\suc_{R_k}(r^k_{b^{k+1}})\big)$,
we see that $\big( S_1(0),..., S_{b^{k+1}}(0)\big)$ is an element of the level product of the vector tree
$\big(\suc_{R_k}(r^k_1),...,\suc_{R_k}(r^k_{b^{k+1}})\big)$. Therefore, by condition (C4) of our inductive assumptions
and (\ref{e511}), we get
\begin{equation} \label{e512}
\mu\Big(\bigcap_{t\in R_{k+1}(k+1)}A_t \ \big| \ \bigcap_{t\in R_k\upharpoonright k}A_{t}\Big)\meg\ee_{k+1}.
\end{equation}
Since $R_{k+1}\upharpoonright k=R_k\upharpoonright k$, we also have that
\begin{equation} \label{e513}
\mu\Big(\bigcap_{t\in R_{k+1}\upharpoonright k+1}A_t\Big) = \mu\Big(\bigcap_{t\in R_k\upharpoonright k}A_t\Big) \cdot
\mu\Big(\bigcap_{t\in R_{k+1}(k+1)} A_t \ \big| \ \bigcap_{t\in R_k\upharpoonright k} A_{t}\Big).
\end{equation}
Therefore, by (\ref{e513}), (\ref{e512}) and condition (C3) of our inductive assumptions,
\begin{equation} \label{e514}
\mu\Big(\bigcap_{t\in R_{k+1}\upharpoonright k+1}A_t \Big)\meg \Big(\prod_{i=0}^{k}\ee_i\Big) \cdot \ee_{k+1} = \prod_{i=0}^{k+1} \ee_i.
\end{equation}
Thus, condition (C3) is satisfied for the tree $R_{k+1}$. So, what remains is to check that condition (C4) is also satisfied.
To this end let $\{r^{k+1}_1<_{\text{lex}}...<_{\text{lex}}r^{k+1}_{b^{k+2}}\}$ be the lexicographical increasing enumeration
of the $(k+2)$-level $R_{k+1}(k+2)$ of the tree $R_{k+1}$. Also let $(t_1,...,t_{b^{k+2}})$ be an arbitrary element of the
level product of $\big(\suc_{R_{k+1}}(r^{k+1}_1),...,\suc_{R_{k+1}}(r^{k+1}_{b^{k+2}})\big)$. For every $i\in\{1,...,b^{k+1}\}$ we define
\begin{equation} \label{e515}
F_i=\{S_i(0)\}\cup\big\{t_{(i-1)b+j}: j\in\{1,...,b\}\big\}.
\end{equation}
Notice that $(F_1,...,F_{b^{k+1}})\in\strong_2(\bfcs)$. Moreover, for every $i\in\{1,...,b^{k+1}\}$,
\begin{equation} \label{e516}
F_i(0)=S_i(0) \ \text{ and } \ F_i(1)=\big\{ t_{(i-1)b+j}: j\in\{1,...,b\} \big\}.
\end{equation}
By (\ref{e516}), we see that
\begin{equation} \label{e517}
\bigcap_{i=1}^{b^{k+1}} \bigcap_{t\in F_i(1)} A_t= \bigcap_{i=1}^{b^{k+2}}A_{t_i}
\end{equation}
while by (\ref{e516}) and (\ref{e511}) and the fact that $R_k\upharpoonright k= R_{k+1}\upharpoonright k$, we have
\begin{equation} \label{e518}
\bigcap_{t\in R_k\upharpoonright k}A_t\cap\bigcap_{i=1}^{b^{k+1}}A_{F_i(0)}= \bigcap_{t\in R_{k+1}\upharpoonright (k+1)}A_t.
\end{equation}
Since $(F_1,...,F_{b^{k+1}})\in\strong_2(\bfcs)$, by (\ref{e59}) and the identities isolated in (\ref{e517}) and (\ref{e518}),
we conclude that
\begin{equation} \label{e519}
\mu\Big(\bigcap_{i=1}^{b^{k+2}} A_{t_i} \ \big| \ \bigcap_{t\in R_{k+1}\upharpoonright (k+1)} A_{t}\Big) \meg
\big(\ee_{k+1}(1-\delta_{k+1})\big)^{\alpha} \stackrel{(\ref{e57})}{=} \ee_{k+2}.
\end{equation}
As $(t_1,...,t_{b^{k+2}})$ was arbitrary, we see that condition (C4) is satisfied. Hence, the recursive selection is completed.

We are now in the position to complete the proof of the lemma. We define
\begin{equation} \label{e520}
W= \bigcup_{k\in\nn} R_k(k).
\end{equation}
By conditions (C1) and (C2), we see that $W$ is a strong subtree of $T$ of infinite height. It suffices to show that
the estimate in (\ref{e55}) holds for every $k\in\nn$. If $k=0$, then this is straightforward. So, let $k\in\nn$ with $k\meg 1$
and observe that $W\upharpoonright k= R_k\upharpoonright k$. Therefore,
\begin{eqnarray*}
\mu\Big(\bigcap_{t\in W\upharpoonright k} A_t\Big) & = & \mu\Big(\bigcap_{t\in R_k \upharpoonright k} A_t\Big)
\stackrel{\mathrm{(C3)}}{\meg} \prod_{i=0}^k \ee_i \\
& \stackrel{(\ref{e5new2})}{=} & \big( \ee^{\sum_{j=0}^k \alpha^j}\big) \cdot \prod_{i=0}^{k-1}
\Big( (1-\delta_i)^{\sum_{j=1}^{k-i} \alpha^j} \Big) \\
& \meg & \big( \ee^{\sum_{j=0}^k \alpha^j} \big) \cdot \Big( \prod_{i=0}^{k-1} (1-\delta_i) \Big)^{\sum_{j=0}^k \alpha^j} \\
& \meg  & \Big( \ee \cdot  \prod_{i\in\nn} (1-\delta_i) \Big)^{\sum_{j=0}^k \alpha^j} \\
& \stackrel{(\ref{e56})}{\meg} & \Big( \ee \cdot \frac{\theta}{\ee}\Big)^{\sum_{j=0}^k \alpha^j}
= \theta^{\sum_{j=0}^k \alpha^j} = \theta^{p(b,k+1)}.
\end{eqnarray*}
The proof of Lemma \ref{constructing a good subtree} is thus completed.
\end{proof}
We are ready to give the proof of Theorem \ref{it1}.
\begin{proof}[Proof of Theorem \ref{it1}]
We fix $0<\theta\mik \ee$. Let
\[ \mathcal{C}=\Big\{ W\in\strong_{\infty}(T): \mu\Big(\bigcap_{t\in W\upharpoonright k} A_t\Big)\meg\theta^{p(b,k+1)}
\text{ for every } k\in\nn \Big\}.\]
It is easy to see that $\mathcal{C}$ is a closed subset of $\strong_{\infty}(T)$. Therefore, by Theorem \ref{t22}
and Lemma \ref{constructing a good subtree}, there exists a strong subtree $S$ of $T$ of infinite height such that
$\strong_{\infty}(S)\subseteq \mathcal{C}$. The strong subtree $S$ is the desired one. Indeed, let $k\in\nn$ with
$k\meg 1$ and $R$ be an arbitrary strong subtree of $S$ of height $k$. There exists a strong subtree $W$ of $S$ of
infinite height such that $R=W\upharpoonright (k-1)$. Since $W\in\strong_{\infty}(S)\subseteq\mathcal{C}$ we see that
\[ \mu\Big(\bigcap_{t\in R} A_t\Big) = \mu\Big(\bigcap_{t\in W\upharpoonright (k-1)} A_t\Big)\meg\theta^{p(b,k)}\]
as desired.
\end{proof}
We proceed to the proof of Corollary \ref{ic2}.
\begin{proof}[Proof of Corollary \ref{ic2}]
Follows by Theorem \ref{it1} and Corollary \ref{AB-cor} in Appendix B.
\end{proof}


\section{Free sets}

This section is organized as follows. In \S 6.1 we introduce the class of free subsets of homogeneous trees and we present some of
their properties. In \S 6.2 we give the proof of Proposition \ref{ip6}. Finally, in \S 6.3 we give the proof of Theorem \ref{it3}.

\subsection {Definition and basic properties}

We start with the following.
\begin{defn} \label{6d1}
Let $T$ be a homogeneous tree. Recursively, for every integer $k\meg 1$ we define a family $\free_k(T)$ of finite subsets of $T$ as follows.
First, let $\free_1(T)$ and $\free_2(T)$ consist of all singletons and all doubletons of $T$ respectively. Let $k\in\nn$ with $k\meg 2$ and
assume that the family $\free_k(T)$ has been defined. Then $\free_{k+1}(T)$ consists of all subsets of $T$ which can be written in the form
$\{t\}\cup G$ where $t\in T$ and $G\in\free_k(T)$ are such that $\ell_T(t)<\ell_T(\wedge G)$. We set
\begin{equation} \label{e61}
\free(T)=\bigcup_{k\meg 1} \free_k(T).
\end{equation}
An element of $\free(T)$ will be called a \emph{free} subset of $T$.
\end{defn}
We have the following characterization of free sets. The proof is straightforward.
\begin{fact} \label{pfree}
Let $T$ be a homogeneous tree and $k\in\nn$ with $k\meg 3$. Also let $F$ be a subset of $T$ of cardinality $k$. Then $F$ is free
if and only if there exists an enumeration $\{t_1,...,t_k\}$ of $F$ such that
\begin{enumerate}
\item[(a)] $\ell_T(t_1)<...<\ell_T(t_{k-1})\mik \ell_T(t_k)$ and
\item[(b)] $\ell_T(t_{m})<\ell_T\big(\wedge\{t_{m+1},...,t_k\}\big)$ for every $m\in \{1,...,k-2\}$.
\end{enumerate}
\end{fact}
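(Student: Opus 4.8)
The plan is to argue by induction on $k\meg 3$, directly unwinding Definition \ref{6d1}. The one structural observation used repeatedly is that, in a homogeneous tree, the infimum $\wedge G$ of a non-empty finite set $G$ is an initial segment of every element of $G$, so that $\ell_T(\wedge G)\mik \ell_T(s)$ for every $s\in G$; in particular, whenever $\ell_T(t)<\ell_T(\wedge G)$ we automatically have $t\notin G$ and $t$ sits strictly below the level of every element of $G$. I would record this as a preliminary remark, since it is what glues property (a), phrased in terms of consecutive levels, to property (b), phrased in terms of levels of infima of tails.

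For the base case $k=3$: by definition $F$ is free if and only if $F=\{t\}\cup G$ with $G$ a doubleton and $\ell_T(t)<\ell_T(\wedge G)$. Writing $G=\{t_2,t_3\}$ with $\ell_T(t_2)\mik\ell_T(t_3)$ and setting $t_1=t$, the inequality $\ell_T(t_1)<\ell_T(\wedge\{t_2,t_3\})\mik\ell_T(t_2)$ yields both (a) and (b). Conversely, any enumeration $\{t_1,t_2,t_3\}$ satisfying (a) and (b) exhibits $F$ as $\{t_1\}\cup\{t_2,t_3\}$ with $\ell_T(t_1)<\ell_T(\wedge\{t_2,t_3\})$, hence $F\in\free_3(T)$.

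For the inductive step from $k$ to $k+1$ (with $k\meg 3$): if $|F|=k+1$ is free, write $F=\{t\}\cup G$ with $G\in\free_k(T)$ and $\ell_T(t)<\ell_T(\wedge G)$; apply the inductive hypothesis to obtain an enumeration $\{s_1,\dots,s_k\}$ of $G$ as in the statement, and then check that $t_1:=t$, $t_{i+1}:=s_i$ is the required enumeration of $F$. Here property (a) for $F$ follows from property (a) for $G$ together with $\ell_T(t_1)<\ell_T(\wedge G)\mik\ell_T(s_1)$, while the $m=1$ instance of (b) for $F$ is precisely $\ell_T(t)<\ell_T(\wedge G)$ and the remaining instances are exactly those of (b) for $G$, reindexed by one. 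Conversely, from an enumeration $\{t_1,\dots,t_{k+1}\}$ of $F$ satisfying (a) and (b) one sets $G=\{t_2,\dots,t_{k+1}\}$: the $m=1$ instance of (b) gives $\ell_T(t_1)<\ell_T(\wedge G)$, and the restrictions of (a) and (b) to the tail indices show, via the inductive hypothesis (applicable since $k\meg 3$), that $G\in\free_k(T)$; hence $F=\{t_1\}\cup G\in\free_{k+1}(T)$.

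The proof is pure bookkeeping and I do not expect any genuine obstacle; the only place requiring a moment's care is the interface between (a) and (b), which is handled once and for all by the preliminary inequality $\ell_T(\wedge\{t_{m+1},\dots,t_k\})\mik\ell_T(t_{m+1})$.
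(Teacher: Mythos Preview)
Your argument is correct and is precisely the natural induction on $k$ that the paper has in mind; the paper itself omits the proof entirely, declaring it ``straightforward.'' Your preliminary remark that $\ell_T(\wedge G)\mik\ell_T(s)$ for every $s\in G$ is exactly the bridge between conditions (a) and (b), and the reindexing in the inductive step is handled cleanly.
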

Using Fact \ref{pfree} it is easily seen that the class of free sets includes various well-known classes of finite subsets of homogeneous
trees; for instance, all finite chains are free, as well as, the class of ``combs" studied in \cite[\S 6.4]{To}. Moreover, we have the following.
\begin{lem} \label{freelarge}
Every infinite subset $A$ of a homogeneous tree $T$ contains an infinite subset $B$ such that every
non-empty finite subset of $B$ is free.
\end{lem}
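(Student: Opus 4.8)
The plan is to build $B$ by a direct recursion that forces the later elements of $B$ to lie very ``high'' in the tree, and then to read off freeness from the combinatorial characterization in Fact~\ref{pfree}. Throughout, write $b$ for the branching number of $T$; since $T$ is homogeneous, $|T(j)|=b^j$ for every $j\in\nn$, so the set of nodes of $T$ of length at most $\ell$ is finite for every $\ell\in\nn$. In particular, every infinite subset $A$ of $T$ contains, for each $\ell\in\nn$, infinitely many nodes of length greater than $\ell$, and — since the level $T(\ell+1)$ is finite — infinitely many nodes that all have the same ancestor at level $\ell+1$.

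Using this, I would select recursively nodes $b_1,b_2,\dots\in A$, infinite sets $A=A_0\supseteq A_1\supseteq A_2\supseteq\cdots$ with $A_n\subseteq A$, and nodes $w_1,w_2,\dots\in T$ so that for every $n\meg 1$:
\begin{enumerate}
\item[(i)] $b_n\in A_{n-1}$;
\item[(ii)] $\ell_T(w_n)>\ell_T(b_n)$; and
\item[(iii)] every element of $A_n$ is an end-extension of $w_n$ (hence has length $>\ell_T(b_n)$).
\end{enumerate}
At stage $n$, having $A_{n-1}$ (infinite, with $A_0=A$), I pick $b_n\in A_{n-1}$ arbitrarily; by the observation above, infinitely many elements of $A_{n-1}$ have length $>\ell_T(b_n)$, and among these infinitely many share a common ancestor $w_n$ at level $\ell_T(b_n)+1$; let $A_n$ be this infinite set. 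Conditions (i)--(iii) are then immediate. Put $B=\{b_n:n\meg 1\}$; by (iii) we have $\ell_T(b_1)<\ell_T(b_2)<\cdots$, so $B$ is infinite.

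It remains to check that every non-empty finite $F\subseteq B$ is free. Singletons and doubletons are free by definition, so assume $|F|=k\meg 3$ and write $F=\{b_{i_1},\dots,b_{i_k}\}$ with $i_1<\cdots<i_k$; then $\ell_T(b_{i_1})<\cdots<\ell_T(b_{i_k})$, which is condition (a) of Fact~\ref{pfree} for the enumeration $t_j:=b_{i_j}$. For condition (b), fix $m\in\{1,\dots,k-2\}$. Every $b_{i_j}$ with $j\meg m+1$ has $i_j\meg i_m+1$, so $b_{i_j}\in A_{i_j-1}\subseteq A_{i_m}$, and hence, by (iii) applied at stage $i_m$, $b_{i_j}$ is an end-extension of $w_{i_m}$. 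Therefore $w_{i_m}$ is a lower bound of $\{b_{i_{m+1}},\dots,b_{i_k}\}$, so $w_{i_m}\mik\wedge\{b_{i_{m+1}},\dots,b_{i_k}\}$, and combining this with (ii) we get $\ell_T(b_{i_m})<\ell_T(w_{i_m})\mik\ell_T\big(\wedge\{b_{i_{m+1}},\dots,b_{i_k}\}\big)$, which is exactly condition (b). Thus $F$ is free.

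The construction and the verification are both elementary pigeonhole arguments, and no Ramsey-theoretic input (nor Milliken's Theorem) is required. The only point that needs care is the bookkeeping: one must set up the recursion so that a \emph{single} invariant — all subsequently chosen nodes lie above the node $w_n$, which itself lies strictly above $b_n$ — simultaneously yields condition (b) of Fact~\ref{pfree} for every $m$, and this is precisely what conditions (ii) and (iii) above achieve.
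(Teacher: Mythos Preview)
Your proof is correct and follows the same approach as the paper's: both select $B$ by a recursion ensuring that each chosen node lies strictly below (in level) the infimum of all later-chosen nodes, and both read off freeness via Fact~\ref{pfree}. The paper's proof is a two-sentence sketch that merely asserts such a recursive selection is possible; your version supplies the details (the pigeonhole passage through a common ancestor $w_n$) that the paper leaves to the reader.
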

\begin{proof}
Recursively, it is possible to select a sequence $(t_n)$ in $A$ such that for every $m\in\nn$ and every non-empty finite subset
$F$ of $\nn$ with $m<\min F$ we have that $\ell_T(t_m)< \ell_T\big( \wedge\{t_n:n\in F\}\big)$. We set $B=\{t_n:n\in\nn\}$.
By Fact \ref{pfree}, we see that every non-empty finite subset of $B$ is free, as desired.
\end{proof}
Finally, we isolate below some elementary properties of all free subsets of a homogeneous tree $T$.
\begin{enumerate}
\item[($\mathcal{P}$1)] If $F\in\free_k(T)$, then $F$ has cardinality $k$.
\item[($\mathcal{P}$2)] If $F\in\free(T)$ and $G$ is a non-empty subset of $F$, then $G\in\free(T)$.
\item[($\mathcal{P}$3)] If $S\in\strong_{\infty}(T)$ and $F\subseteq S$, then $F\in\free(T)$ if and only if $F\in\free(S)$.
\end{enumerate}
Properties ($\mathcal{P}$1) and ($\mathcal{P}$2) are immediate consequences of Definition \ref{6d1}. Property ($\mathcal{P}$3)
follows from the fact that strong subtrees preserve infima.

\subsection{Proof of Proposition \ref{ip6}}

For the proof of Proposition \ref{ip6} we need to do some preparatory work which is of independent interest. To motivate the reader
let us point out that, by Corollary \ref{AB-cor} in Appendix B, every doubleton of a homogeneous tree is contained in a strong subtree
of height $3$. The first step in the proof of Proposition \ref{ip6} is to analyze how this embedding is achieved. As a consequence
of this analysis and Theorem \ref{t22}, the set of all doubletons of a homogeneous tree will be categorized in a finite list of classes
each of which is partition regular. This information will be used, later on, to complete the proof of Proposition \ref{ip6}.

We proceed to the details. In what follows, $T$ will be a homogeneous tree.

\subsection*{Doubletons of type I}

Let $p\in\{0,...,b_T-1\}$ and for every $F\in\strong_3(T)$ we set
\begin{equation} \label{e62-1}
F[p]=\big\{ F(0), F(0)^{\con_F}\!p \big\}.
\end{equation}
We say that a doubleton of $T$ is of \textit{type $\mathrm{I}$ with parameter $(p)$} if it is of the form $F[p]$
for some $F\in\strong_3(T)$. We set
\begin{equation} \label{e62-2}
\mathcal{D}_{(p)}(T)=\big\{ F[p]: F\in\strong_3(T) \big\}.
\end{equation}

\subsection*{Doubletons of type II}

Let $p,q\in\{0,..., b_T-1\}$ with $p\neq q$ and for every $F\in\strong_3(T)$ we set
\begin{equation} \label{e62-3}
F[p,q]=\big\{ F(0)^{\con_F}\!p, F(0)^{\con_F}\!q \big\}.
\end{equation}
We say that a doubleton of $T$ is of \textit{type $\mathrm{II}$ with parameters $(p,q)$} if it is of the form $F[p,q]$
for some $F\in\strong_3(T)$. As above, we set
\begin{equation} \label{e62-4}
\mathcal{D}_{(p,q)}(T)= \big\{ F[p,q]: F\in\strong_3(T)\big\}.
\end{equation}

\subsection*{Doubletons of type III}

Let $p,q,r\in\{0,..., b_T-1\}$ with $p\neq q$. For every $F\in\strong_3(T)$ we set
\begin{equation} \label{e62-5}
F[p,q,r]=\big\{ F(0)^{\con_F}\!p, (F(0)^{\con_F}\!q)^{\con_F}\!r \big\}.
\end{equation}
We say that a doubleton of $T$ is of \textit{type $\mathrm{III}$ with parameters $(p,q,r)$} if it is of the form $F[p,q,r]$
for some $F\in\strong_3(T)$ and we set
\begin{equation} \label{e62-6}
\mathcal{D}_{(p,q,r)}(T)= \big\{ F[p,q,r]: F\in\strong_3(T)\big\}.
\end{equation}
Observe that for every $p\in\{0,...,b_T-1\}$ the class $\mathcal{D}_{(p)}(T)$ is hereditary when passing to strong subtrees;
that is, if $S\in\strong_{\infty}(T)$, then $\mathcal{D}_{(p)}(S)\subseteq \mathcal{D}_{(p)}(T)$. Also notice that, by Theorem
\ref{t22}, for every finite coloring of the set $\mathcal{D}_{(p)}(T)$ there exists $S\in\strong_{\infty}(T)$
such that the set $\mathcal{D}_{(p)}(S)$ is monochromatic. Of course, these properties are also shared by the classes
$\mathcal{D}_{(p,q)}(T)$ and $\mathcal{D}_{(p,q,r)}(T)$. Moreover, we have the following.
\begin{fact} \label{62f1}
Every doubleton of a homogeneous tree $T$ is either of type $\mathrm{I}$, or of type $\mathrm{II}$, or of type $\mathrm{III}$.
\end{fact}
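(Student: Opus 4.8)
The plan is to take an arbitrary doubleton $\{s,t\}$ of $T$ and exhibit an explicit strong subtree $F$ of height $3$ together with the appropriate parameters witnessing membership in one of the three families $\mathcal{D}_{(p)}$, $\mathcal{D}_{(p,q)}$, $\mathcal{D}_{(p,q,r)}$. First I would put $w = s \wedge t$, so that $w \sqsubseteq s$ and $w \sqsubseteq t$, and split into cases according to the position of $s$ and $t$ relative to $w$. Since $\{s,t\}$ has two elements, at most one of $s,t$ can equal $w$.

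\emph{Case 1: one of the two nodes is the infimum.} Say $w = s$, so $s \sqsubset t$; let $p \in \{0,\dots,b_T-1\}$ be the unique index with $s^{\con}p \sqsubseteq t$. Using the convention of \S 2.4, $t$ lies in $\suc_{b_T^{<\nn}}(s^{\con}p)$, and since $T$ is a strong subtree of $b_T^{<\nn}$ there is a node of $T$ in $\immsuc_T(s)$ above $s^{\con}p$, namely $s^{\con_T}p$. One then picks levels $n_0 < n_1 < n_2$ of $T$ with $n_0 = \ell_T(s)$, $n_1 = \ell_T(s^{\con_T}p)$ and $n_2$ any level with $\ell_T(t) \mik n_2$ and a node of $T$ above $t$ at level $n_2$; assembling one node of $T$ per required branching at these three levels (choosing the branch through $t$ at the relevant spots) produces $F \in \strong_3(T)$ with $F(0) = s$, $F(0)^{\con_F}p \sqsubseteq t$, whence (after relabelling along the appropriate vertical line so that the second generator \emph{is} $t$, not merely an ancestor) $\{s,t\} = F[p]$ and the doubleton is of type $\mathrm{I}$.

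\emph{Case 2: $w$ is a strict predecessor of both $s$ and $t$.} Let $p \neq q$ in $\{0,\dots,b_T-1\}$ be the distinct indices with $w^{\con}p \sqsubseteq s$ and $w^{\con}q \sqsubseteq t$. If $\ell_T(s) = \ell_T(w) + 1$ and $\ell_T(t) = \ell_T(w)+1$, then $s = w^{\con_T}p$ and $t = w^{\con_T}q$ are immediate successors of $w$ in $T$; building $F \in \strong_3(T)$ with root $w$, second level $\immsuc_T(w)$ (so $s,t$ are among $F(1)$) and any admissible third level gives $\{s,t\} = F[p,q]$, type $\mathrm{II}$. Otherwise at least one of $s,t$ sits two or more levels above $w$; after passing to a strong subtree that realizes $s$ at level $\ell_T(w)+1$ and $t$ at level $\ell_T(w)+2$ (or the symmetric arrangement), pick $r$ to be the branch index at the intermediate node so that $(F(0)^{\con_F}q)^{\con_F}r \sqsubseteq t$, giving $\{s,t\} = F[p,q,r]$, type $\mathrm{III}$. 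The main obstacle — and the only genuinely delicate point — is verifying in each case that the chosen nodes at the three selected levels really do extend to a bona fide strong subtree of $T$ of height $3$ (i.e.\ that condition (c) in the definition of strong subtree can be met simultaneously with passing through the prescribed $s$ and $t$); this is where one uses that $T$ itself is homogeneous, so every node has a full set of $b_T$ immediate successors to choose from, and that strong subtrees preserve infima, so $s \wedge t$ is computed correctly inside $F$. Once this bookkeeping is done the trichotomy is immediate, completing the proof.
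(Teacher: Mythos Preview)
Your case split in Case 2 is wrong, and this is a genuine gap. You distinguish ``both $s,t$ are immediate successors of $w$ in $T$'' from ``at least one of $s,t$ sits two or more levels above $w$'', assigning the first to type~II and the second to type~III. But consider $s,t$ with $\ell_T(s)=\ell_T(t)$ both strictly greater than $\ell_T(w)+1$ (e.g.\ $s=w^{\con_T}0^{\con_T}0$ and $t=w^{\con_T}1^{\con_T}0$ in $T$). Your split puts this pair into the ``otherwise'' branch and tries to realise it as $F[p,q,r]$. That is impossible: for any $F\in\strong_3(T)$ the node $F(0)^{\con_F}p$ lies in $F(1)$ and $(F(0)^{\con_F}q)^{\con_F}r$ lies in $F(2)$, and these are contained in distinct levels of $T$. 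Two nodes on the same $T$-level can never form a type~III doubleton. The correct trichotomy, which is what the paper uses, is governed by the \emph{levels} $\ell_T(s),\ell_T(t),\ell_T(w)$: after arranging $\ell_T(s)\mik\ell_T(t)$, the three cases are $\ell_T(w)=\ell_T(s)$ (type~I), $\ell_T(w)<\ell_T(s)=\ell_T(t)$ (type~II), and $\ell_T(w)<\ell_T(s)<\ell_T(t)$ (type~III). In the type~II case one takes the level set of $F$ to be $\{\ell_T(w),\ell_T(s),n_2\}$ for any $n_2>\ell_T(s)$, so that $F(0)=w$ and both $s,t$ sit in $F(1)$.

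Your Case 1 is also muddled: you set $n_1=\ell_T(s^{\con_T}p)=\ell_T(s)+1$, which forces $F(0)^{\con_F}p=s^{\con_T}p$, merely an ancestor of $t$; the parenthetical ``relabelling'' does not fix this. The remedy is simply to take $n_1=\ell_T(t)$ from the start, so that $F(0)^{\con_F}p$ can be chosen equal to $t$ (and then $n_2$ is any level of $T$ above $\ell_T(t)$).
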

\begin{proof}
Let $s, t\in T$ with $s\neq t$ be arbitrary. We may assume that $\ell_T(s)\mik \ell_T(t)$. We set $w=s\wedge t$ and we consider
the following cases.
\medskip

\noindent \textsc{Case 1}: $\ell_T(w)=\ell_T(s)$. In this case we see that $s=s\wedge t$. Since $s\neq t$, there exists
$p\in\{0,...,b_T-1\}$ such that $t\in \suc_T(s^{\con_T}\!p)$. Therefore, it is possible to select $F\in\strong_3(T)$ such that
$F(0)=s$ and $F(0)^{\con_F}\!p=t$. So in this case the doubleton $\{s,t\}$ is of type $\mathrm{I}$ with parameter $(p)$.
\medskip

\noindent \textsc{Case 2}: \textit{$\ell_T(w)<\ell_T(s)$ and $\ell_T(s)=\ell_T(t)$}. There exist $p,q\in\{0,...,b_T-1\}$ such that
$s\in\suc_T(w^{\con_T}\!p)$ and $t\in\suc_T(w^{\con_T}\!q)$. Observe that $p\neq q$. It is then possible to select $F\in\strong_3(T)$
such that $F(0)=w$, $F(0)^{\con_F}\!p=s$ and $F(0)^{\con_F}\!q=t$. Therefore, in this case the doubleton $\{s,t\}$ is of type
$\mathrm{II}$ with parameters $(p,q)$.
\medskip

\noindent \textsc{Case 3}: $\ell_T(w)<\ell_T(s)<\ell_T(t)$. Notice, first, that there exist $p,q\in\{0,...,b_T-1\}$ with
$p\neq q$ such that $s\in\suc_T(w^{\con_T}\!p)$ and $t\in\suc_T(w^{\con_T}\!q)$. Since $\ell_T(s)<\ell_T(t)$, there exist
$t'\in\suc_T(w^{\con_T}\!q)$ and $r\in\{0,...,b_T-1\}$ such that $\ell_T(t')=\ell_T(s)$ and $t\in\suc_T(t'^{\con_T}\!r)$.
Hence, we may select $F\in\strong_3(T)$ such that $F(0)=w$, $F(0)^{\con_F}\!p=s$, $F(0)^{\con_F}\!q=t'$ and
$(F(0)^{\con_F}\!q)^{\con_F}\!r=t$. It follows that the doubleton $\{s,t\}$ is of type $\mathrm{III}$ with parameters $(p,q,r)$.
The proof is completed.
\end{proof}
We are now ready to proceed to the proof of Proposition \ref{ip6}.
\begin{proof}[Proof of Proposition \ref{ip6}]
We fix $0<\theta<\ee$. Let $p,q,r\in\{0,...,b_T-1\}$ with $p\neq q$ be arbitrary. We set
\begin{equation} \label{e62-7}
\mathcal{F}_{\mathrm{I}}=\Big\{ F\in\strong_3(T): \mu\Big( \bigcap_{t\in F[p]} A_t\Big)\meg \theta^2\Big\},
\end{equation}
\begin{equation} \label{e62-8}
\mathcal{F}_{\mathrm{II}}=\Big\{ F\in\strong_3(T): \mu\Big( \bigcap_{t\in F[p,q]} A_t\Big)\meg \theta^2\Big\}
\end{equation}
and
\begin{equation} \label{e62-9}
\mathcal{F}_{\mathrm{III}}=\Big\{ F\in\strong_3(T): \mu\Big( \bigcap_{t\in F[p,q,r]} A_t\Big)\meg \theta^2\Big\}.
\end{equation}
By Theorem \ref{t22}, there exists $S\in\strong_{\infty}(T)$ such that for every $i\in\{\mathrm{I},\mathrm{II},\mathrm{III}\}$
we have that either $\strong_3(S)\subseteq \mathcal{F}_i$ or $\strong_3(S)\cap\mathcal{F}_i=\varnothing$. Therefore,
by Fact \ref{62f1}, the proof will be completed once we show that $\strong_3(S)\cap\mathcal{F}_i\neq\varnothing$
for every $i\in\{\mathrm{I},\mathrm{II},\mathrm{III}\}$. The argument below is not uniform and depends on the type of
doubletons we are dealing with. We set $N=\lceil(\ee^2-\theta^2)^{-1}\rceil$. Notice that we may (and we will) assume
that $S$ is the tree $b_T^{<\nn}$.
\medskip

\noindent \textsc{Case 1}: \textit{type $\mathrm{I}$ doubletons}. We set $t_k=p^k\in b_T^{<\nn}$ for every $k\in\{0,...,N-1\}$.
By our assumptions, Lemma \ref{measure} can be applied to the family $(A_{t_k})_{k=0}^{N-1}$ and the fixed constant $\theta$.
Hence, there exist $0\mik k_0<k_1<N$ such that $\mu(A_{t_{k_0}}\cap A_{t_{k_1}})\meg \theta^2$. We select $F\in\strong_3(b_T^{<\nn})$
such that
\[ F\upharpoonright 1=\{p^{k_0}\} \cup \big\{ p^{k_0}j^{k_1-k_0}: j\in\{0,...,b_T-1\}\big\}. \]
Since $F[p]=\{p^{k_0},p^{k_1}\}=\{t_{k_0},t_{k_1}\}$, we conclude that $F\in\strong_3(b_T^{<\nn})\cap \mathcal{F}_{\mathrm{I}}$.
\medskip

\noindent \textsc{Case 2}: \textit{type $\mathrm{II}$ doubletons}. In this case we set $s_k=q^k p^{N-1-k}\in b_T^{<\nn}$
for every $k\in\{0,...,N-1\}$. By Lemma \ref{measure}, there exist $0\mik k_0<k_1<N$ such that
$\mu(A_{s_{k_0}}\cap A_{s_{k_1}})\meg \theta^2$. We select $G\in\strong_3(b_T^{<\nn})$ such that
\[ G\upharpoonright 1=\{q^{k_0}\} \cup \big\{ q^{k_0}j^{k_1-k_0}p^{N-1-k_1}: j\in\{0,...,b_T-1\}\big\}. \]
Observe that $G[p,q]=\{q^{k_0}p^{N-1-k_0},q^{k_1}p^{N-1-k_1}\}=\{s_{k_0},s_{k_1}\}$. It follows that
$G\in\strong_3(b_T^{<\nn})\cap \mathcal{F}_{\mathrm{II}}$.
\medskip

\noindent \textsc{Case 3}: \textit{type $\mathrm{III}$ doubletons}. We set $w_k=(qr)^kp\in b_T^{<\nn}$ for every $k\in\{0,...,N-1\}$
where $(qr)^k$ stands for the $k$-times concatenation of $(qr)$ if $k\meg 1$ and $(qr)^0=\varnothing$. Arguing as above, we find
$0\mik k_0<k_1<N$ such that $\mu(A_{w_{k_0}}\cap A_{w_{k_1}})\meg \theta^2$. Let
\begin{eqnarray*}
H & = & \{(qr)^{k_0}\}\cup \big\{(qr)^{k_0}j: j\in\{0,...,b_T-1\}\big\} \cup \\
& & \big\{ (qr)^{k_0}jv(qr)^{k_1-k_0-1}p: j,v\in\{0,...,b_T-1\}\big\}.
\end{eqnarray*}
Notice that $H\in\strong_3(b_T^{<\nn})$ and $H[p,q,r]=\{(qr)^{k_0}p,(qr)^{k_1}p\}=\{w_{k_0},w_{k_1}\}$. Hence,
$H\in\strong_3(b_T^{<\nn})\cap \mathcal{F}_{\mathrm{III}}$. The proof is completed.
\end{proof}
We close this subsection with the following consequence of Proposition \ref{ip6}. It is the analogue
of Corollary \ref{relcor1prop} and it will be used in the proof of Theorem \ref{it3}.
\begin{cor} \label{corfree}
Let $T$ be a homogeneous tree. Also let $\{A_t:t\in T\}$ be a family of measurable events in a probability space $(\Omega,\Sigma,\mu)$
and $Y\in\Sigma$ with $\mu(Y)>0$ such that $\mu(A_t \ | \ Y)\meg\ee>0$ for every $t\in T$. Then for every $0<\theta<\ee$ there exists
$S\in \strong_{\infty}(T)$ such that for every $s,t\in S$ we have $\mu( A_{t} \ | \ Y\cap A_{s})\meg\theta$.
\end{cor}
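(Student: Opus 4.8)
The plan is to deduce Corollary~\ref{corfree} from Proposition~\ref{ip6} applied to the conditional measure $\mu_Y$, in complete analogy with the way Corollary~\ref{relcor1prop} was deduced from Lemma~\ref{3l7}. Proposition~\ref{ip6} controls the \emph{joint} probabilities $\mu_Y(A_s\cap A_t)$, whereas the conclusion we are after concerns the \emph{conditional} probabilities $\mu(A_t \ | \ Y\cap A_s)=\mu_Y(A_s\cap A_t)/\mu_Y(A_s)$; hence the only genuine issue is to gain control of the denominators $\mu_Y(A_s)$.

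First I would fix $0<\theta<\ee$, set $\lambda=(\ee/\theta)^{1/3}>1$, and perform a stabilization step. Since $\mu(Y)>0$, the measure $\mu_Y$ is a probability measure and $\mu_Y(A_t)=\mu(A_t \ | \ Y)\meg\ee$ for every $t\in T$. Cover the interval $[\ee,1]$ by the finitely many bands of the form $[\ee\lambda^i,\ee\lambda^{i+1}]$, colour each $t\in T$ by the least index $i$ with $\mu_Y(A_t)\mik\ee\lambda^{i+1}$, and apply the case ``$k=1$'' of Theorem~\ref{t22} finitely many times to obtain a strong subtree $S_0$ of $T$ of infinite height on which this colour is constant, say equal to $i_0$. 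Writing $\ee_0=\ee\lambda^{i_0}$, we then have $\ee\mik\ee_0\mik\mu_Y(A_t)\mik\ee_0\lambda$ for every $t\in S_0$ (the lower bound because the colour is the \emph{least} admissible index together with $\mu_Y(A_t)\meg\ee$).

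Next, $S_0$ is again a homogeneous tree and $\mu_Y(A_t)\meg\ee_0>0$ on it, so I would apply Proposition~\ref{ip6} to the probability space $(\Omega,\Sigma,\mu_Y)$, the family $\{A_t:t\in S_0\}$, and the intermediate constant $\theta'=\ee_0/\lambda<\ee_0$. This yields a strong subtree $S$ of $S_0$ (hence of $T$) of infinite height such that $\mu_Y(A_s\cap A_t)\meg(\theta')^2=\ee_0^2/\lambda^2$ for all $s,t\in S$. Then for every $s,t\in S$ we have $\mu(Y\cap A_s)\meg\mu(Y)\ee_0>0$, so the conditional probability in question is defined, and
\[
\mu(A_t \ | \ Y\cap A_s)=\frac{\mu_Y(A_s\cap A_t)}{\mu_Y(A_s)}\meg\frac{\ee_0^2/\lambda^2}{\ee_0\lambda}=\frac{\ee_0}{\lambda^3}\meg\frac{\ee}{\lambda^3}=\theta,
\]
which is the desired estimate.

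The step I expect to be the real obstacle is controlling the denominator $\mu_Y(A_s)$: invoking Proposition~\ref{ip6} with only the trivial bound $\mu_Y(A_s)\mik 1$ would establish the corollary merely in the range $\theta<\ee^2$, so the preliminary stabilization of the values $\mu_Y(A_t)$ into a band of multiplicative width $\lambda=(\ee/\theta)^{1/3}$ --- tuned precisely so that $\ee/\lambda^3=\theta$ --- is exactly what makes the argument run for the full range $0<\theta<\ee$. The remaining points (well-definedness of the colouring, the fact that a strong subtree of infinite height of a homogeneous tree is again a homogeneous tree, and that finitely many bands suffice to cover $[\ee,1]$) are routine.
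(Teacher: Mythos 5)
Your proposal is correct and follows essentially the same route as the paper: a preliminary stabilization of the values $\mu_Y(A_t)$ into a multiplicative band of width $\lambda=(\ee/\theta)^{1/3}$ via Milliken's theorem, followed by an application of Proposition \ref{ip6} to the probability space $(\Omega,\Sigma,\mu_Y)$ with an intermediate constant one factor of $\lambda$ below the band, and then dividing by $\mu_Y(A_s)$ using the band's upper bound. The arithmetic $\ee_0/\lambda^3\meg\ee\lambda^{-3}=\theta$ matches the paper's computation up to a harmless shift in the indexing of the bands.
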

\begin{proof}
We fix $0<\theta<\ee$ and we set
\begin{equation} \label{e61-1}
\lambda=\big( \ee\cdot \theta^{-1}\big)^{\frac{1}{3}}.
\end{equation}
Notice that $\lambda>1$. Also let
\begin{equation} \label{e61-2}
r=\Big\lceil\frac{\ln\ee^{-1}}{\ln\lambda}\Big\rceil.
\end{equation}
By Theorem \ref{t22} and the choice of $r$, there exist $R\in\strong_{\infty}(T)$ and $i_0\in\{1,...,r\}$ such that for every $t\in R$ we have
\begin{equation} \label{e61-3}
\ee \lambda^{i_0-1}\mik \mu_Y(A_t)\mik \ee \lambda^{i_0}.
\end{equation}
By Proposition \ref{ip6} applied for ``$\theta=\ee \lambda^{i_0-2}$", ``$\ee=\ee \lambda^{i_0-1}$", the family ``$\{A_t:t\in R\}$" and
the probability space ``$(\Omega,\Sigma,\mu_Y)$", there exists $S\in\strong_{\infty}(R)$ such that
\begin{equation} \label{e61-4}
\mu_Y(A_t\cap A_s)\meg \ee^2 \lambda^{2i_0-4}.
\end{equation}
for every $s,t\in S$. By (\ref{e61-3}) and (\ref{e61-4}) and taking into account that $\lambda>1$ and $i_0\meg 1$, we conclude that
\begin{eqnarray*}
\mu(A_t \ | \ Y\cap A_s) & = & \frac{\mu(A_t\cap Y\cap A_s)}{\mu(Y\cap A_s)} = \frac{\mu_Y(A_t\cap A_s)}{\mu_Y( A_s)} \meg
\frac{\ee^2 \lambda^{2i_0-4}}{\ee\lambda^{i_0}} \\
& = & \ee\lambda^{i_0-4} \meg \ee \lambda^{-3} \stackrel{(\ref{e61-1})}{=} \theta
\end{eqnarray*}
for every $s,t\in S$. The proof is completed.
\end{proof}

\subsection{Proof of Theorem \ref{it3}}

Throughout the proof we will use the following notation. For every tree $U$ and every finite subset $F$ of $U$ we set
\begin{equation*}
\mathrm{depth}_U(F)=
\left\{ \begin{array} {ll} \min\{n\in\nn: F\subseteq U\upharpoonright n\} &\text{if $F$ is non-empty}, \\
-1  & \text{otherwise}. \end{array} \right.
\end{equation*}
The quantity $\mathrm{depth}_U(F)$ is called the \textit{depth} of $F$ in $U$ (see, e.g., \cite{To}).

Now, fix $0<\theta<\ee\mik 1$. We select a sequence $(\delta_n)$ in $(0,1)$ such that
\begin{equation} \label{eqfree1}
\prod_{n\in\nn} (1-\delta_n)\meg\frac{\theta}{\ee}.
\end{equation}
Let $(\ee_n)$ be the sequence of positive reals defined recursively by the rule
\begin{equation} \label{eqe}
\left\{ \begin{array} {l} \ee_0=\ee, \\ \ee_{n+1}=\ee_n(1-\delta_n). \end{array} \right.
\end{equation}
Notice that the sequence $(\ee_n)$ is strictly decreasing. Moreover, it is easy to see that for every integer $n\meg 1$ we have
\begin{equation} \label{forme}
\prod_{i=0}^{n}\ee_{i}=\ee^{n+1} \cdot \Big( \prod_{i=0}^{n-1}(1-\delta_i)^{n-i} \Big).
\end{equation}

Recursively, we will select a sequence $(R_n)$ of strong subtrees of $T$ of infinite height such that for every
$n\in\nn$ the following conditions are satisfied.
\begin{enumerate}
\item[(C1)] The tree $R_{n+1}$ is a strong subtree of $R_n$.
\item[(C2)] We have $R_{n+1}\upharpoonright n=R_n\upharpoonright n$.
\item[(C3)] For every finite subset $F$ of $ R_n$ with $\mathrm{depth}_{R_n}(F)\mik n-1$ and every $t\in R_n$
with $n\mik \ell_{R_n}(t)$, if $F\cup\{t\}\in\free(R_n)$ then
\begin{equation} \label{777}
\mu\Big( \bigcap_{w\in F\cup\{t\}} A_w \Big)\meg \prod_{i=0}^{|F|}\ee_i.
\end{equation}
\item[(C4)] For every finite subset $F$ of $R_n$ with $\mathrm{depth}_{R_n}(F)\mik n-1$ and every $s,t\in R_n$
with $s\neq t$ and $n\mik\min\{\ell_{R_n}(s), \ell_{R_n}(t)\}$, if $F\cup\{s, t\}\in\free(R_n)$ then
\begin{equation} \label{eqfreez}
\mu\Big( A_{t} \ \big| \ \bigcap_{w\in F\cup \{s\}} A_w \Big) \meg\ee_{|F|+1}.
\end{equation}
\end{enumerate}
As the reader might have already guess, the above recursive selection is the main step of the proof of Theorem \ref{it3}.
We are mainly interested in conditions (C3) and (C4). The analytical information guaranteed by estimates (\ref{777})
and (\ref{eqfreez}) will be used, later on, to complete the proof of Theorem \ref{it3}.

We proceed to the details. For $n=0$ we apply Corollary \ref{corfree} for ``$Y=\Omega$" and ``$\theta=\ee_1$" and we get a strong
subtree $S$ of $T$ of infinite height such that for every $s,t\in S$ we have $\mu(A_t \ | \ A_s)\meg \ee_1$. We set ``$R_0=S$" and we
observe that with this choice conditions (C3) and (C4) are satisfied. Since (C1) and (C2) are meaningless for $n=0$, the first step
of the recursive selection is completed.

Let $n\in\nn$ and assume that we have selected the trees $R_0,..., R_n$ so that conditions (C1)-(C4) are satisfied.
We need to find the tree $R_{n+1}$. We start with the following fact.
\begin{fact} \label{f63}
Let $F$ be a non-empty finite subset of $R_n$ with $\mathrm{depth}_{R_n}(F)\mik n$ and $t\in R_n$ with $n+1\mik \ell_{R_n}(t)$.
If $F\cup \{t\}\in\free(R_n)$, then the following hold.
\begin{enumerate}
\item[(i)] There exist $k\in\{0,...,n\}$, a (possibly empty) subset $G$ of $R_k$ satisfying $\mathrm{depth}_{R_k}(G)\mik k-1$
and a node $s\in R_k$ with $k=\ell_{R_k}(s)<\ell_{R_k}(t)$ such that $F\cup \{t\}=G\cup\{s,t\}$.
\item[(ii)] We have
\begin{equation} \label{eq512}
\mu\Big(A_{t} \ \big| \ \bigcap_{w\in F} A_w \Big)\meg\ee_{|F|}.
\end{equation}
\end{enumerate}
\end{fact}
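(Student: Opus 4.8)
The plan is to obtain (i) by decomposing the free set $F\cup\{t\}$ via the combinatorial description of free sets in Fact \ref{pfree}, and then to derive (ii) from (i) by applying condition (C4) of the inductive hypothesis (which holds for $R_0,\dots,R_n$) to the tree $R_k$. First I would note that $t$ has strictly larger $R_n$-level than every element of $F$: the hypothesis $\mathrm{depth}_{R_n}(F)\mik n$ forces $\ell_{R_n}(w)\mik n$ for every $w\in F$, whereas $\ell_{R_n}(t)\meg n+1$. If $|F|=1$, say $F=\{s\}$, one takes $G=\varnothing$ and $k=\ell_{R_n}(s)$. If $|F|\meg 2$, then $|F\cup\{t\}|\meg 3$ and Fact \ref{pfree} provides an enumeration $\{u_1,\dots,u_m\}$ of $F\cup\{t\}$, with $m=|F|+1$, such that $\ell_{R_n}(u_1)<\cdots<\ell_{R_n}(u_{m-1})\mik\ell_{R_n}(u_m)$ and condition (b) of Fact \ref{pfree} holds. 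Since $t$ is the unique element of largest level, $t=u_m$, hence $F=\{u_1,\dots,u_{m-1}\}$; I then set $s=u_{m-1}$, $G=F\setminus\{s\}$ and $k=\ell_{R_n}(s)$, so that $F\cup\{t\}=G\cup\{s,t\}$ and $k\mik n$.

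The next step is to transfer everything from $R_n$ to $R_k$. Iterating (C2) yields $R_n\upharpoonright j=R_k\upharpoonright j$ for every $j\mik k$, so $R_n$ and $R_k$ have the same nodes and levels up to level $k$; in particular $\ell_{R_n}(s)=k$ gives $s\in R_k$ with $\ell_{R_k}(s)=k$, and every $w\in G$ satisfies $\ell_{R_n}(w)<k$, whence $G\subseteq R_k\upharpoonright(k-1)$, i.e. $\mathrm{depth}_{R_k}(G)\mik k-1$. By (C1) and transitivity of the strong-subtree relation, $R_n$ is a strong subtree of $R_k$, so $t\in R_n\subseteq R_k$ and, since passing to a strong subtree does not decrease levels, $\ell_{R_k}(t)\meg\ell_{R_n}(t)\meg n+1>k=\ell_{R_k}(s)$. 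This establishes (i).

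For (ii), I would apply condition (C4) for the tree $R_k$ to the set $G$ and the nodes $s$ and $t$: its hypotheses are exactly what (i) supplies, namely $\mathrm{depth}_{R_k}(G)\mik k-1$, $s\neq t$, $k\mik\min\{\ell_{R_k}(s),\ell_{R_k}(t)\}$, and $G\cup\{s,t\}=F\cup\{t\}\in\free(R_k)$ — the last being equivalent to $F\cup\{t\}\in\free(R_n)$ by property ($\mathcal{P}$3) applied to $R_n\in\strong_{\infty}(R_k)$. (All the conditional probabilities in sight are well-defined, since $\mu(\bigcap_{w\in F}A_w)>0$ by (C3) for $R_k$ applied to $G$ and $s$.) Thus (C4) yields $\mu\big(A_t\,\big|\,\bigcap_{w\in G\cup\{s\}}A_w\big)\meg\ee_{|G|+1}$, and since $G\cup\{s\}=F$ and $|G|+1=|F|$ this is precisely (\ref{eq512}). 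The main things to watch are the bookkeeping of how levels shift among the $R_j$ (handled by (C2) together with the monotonicity of levels under strong subtrees) and the correct identification, via Fact \ref{pfree}, of $t$ as the top node — so that $F$ itself splits as $G\cup\{s\}$ with the remaining part $G$ sitting low enough in $R_k$ for (C3) and (C4) to apply; the other verifications are routine.
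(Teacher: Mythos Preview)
Your proof is correct and follows essentially the same approach as the paper: you unpack part (i) via Fact~\ref{pfree} and conditions (C1)--(C2), and then derive (ii) by applying (C4) for the tree $R_k$ together with property ($\mathcal{P}$3), which is exactly what the paper does (albeit more tersely). The extra care you take in identifying $t$ as the unique top element and in tracking how levels transfer from $R_n$ to $R_k$ is sound and simply fills in details the paper leaves implicit.
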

\begin{proof}
Part (i) follows by the definition of free sets and conditions (C1) and (C2) of the recursive selection. To see that part (ii)
is also satisfied let $k$, $G$ and $s$ be as in part (i). By property ($\mathcal{P}$3) in \S 6.1 and our inductive assumptions,
we have that $G\cup\{s,t\}\in\free(R_k)$. Therefore, by condition (C4) for the tree $R_k$ applied for the set $G$ and the
doubleton $\{s,t\}$, we see that
\[ \mu\Big(A_{t} \ \big| \ \bigcap_{w\in F}A_w\Big)= \mu\Big( A_{t} \ \big| \ \bigcap_{w\in G\cup \{s\}} A_w \Big) \meg
\ee_{|G|+1}=\ee_{|F|}\]
and the proof is completed.
\end{proof}
The following consequence of Fact \ref{f63} shows that for the selection of the tree $R_{n+1}$ we only have to worry about
conditions (C1), (C2) and (C4).
\begin{cor} \label{cor631}
Let $W\in\strong_{\infty}(R_n)$ be such that $W\upharpoonright n= R_n\upharpoonright n$. Then condition
$\mathrm{(C3)}$ is satisfied if we set $R_{n+1}=W$.
\end{cor}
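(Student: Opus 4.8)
The plan is to read condition (C3) for $R_{n+1}=W$ off Fact \ref{f63}: part (i) of that fact will reduce the desired estimate to condition (C3) for one of the \emph{already constructed} trees $R_{m'}$ with $m'\mik n$, so that no new recursion is required. Explicitly, fix a finite set $F\subseteq W$ with $\mathrm{depth}_W(F)\mik n$ and a node $t\in W$ with $\ell_W(t)\meg n+1$ such that $F\cup\{t\}\in\free(W)$; what must be shown is $\mu\big(\bigcap_{w\in F\cup\{t\}}A_w\big)\meg\prod_{i=0}^{|F|}\ee_i$. If $F=\varnothing$ this is immediate, the left-hand side being $\mu(A_t)\meg\ee=\ee_0$, so from here on I would assume $F\neq\varnothing$.

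The first step is to transport the configuration from $W$ to $R_n$. Since $W\upharpoonright n=R_n\upharpoonright n$ we get $F\subseteq R_n\upharpoonright n$, hence $\mathrm{depth}_{R_n}(F)\mik n$; since $W$ is a strong subtree of $R_n$ we have $\ell_{R_n}(t)\meg\ell_W(t)\meg n+1$; and since $W\in\strong_{\infty}(R_n)$, property ($\mathcal{P}$3) of \S 6.1 gives $F\cup\{t\}\in\free(R_n)$. Thus $(F,t)$ is a configuration to which Fact \ref{f63}, relative to the tree $R_n$, applies.

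Next I would invoke both parts of Fact \ref{f63}. Part (ii) yields $\mu\big(A_t\mid\bigcap_{w\in F}A_w\big)\meg\ee_{|F|}$. Part (i) produces an integer $m'\in\{0,\dots,n\}$, a (possibly empty) subset $G$ of $R_{m'}$ with $\mathrm{depth}_{R_{m'}}(G)\mik m'-1$, and a node $s\in R_{m'}$ with $\ell_{R_{m'}}(s)=m'$, such that $F\cup\{t\}=G\cup\{s,t\}$; consequently $F=G\cup\{s\}$ with $s\notin G$, so $|F|=|G|+1$. The key point is that $(G,s)$ is precisely an admissible instance of condition (C3) for the tree $R_{m'}$: the requirements $\mathrm{depth}_{R_{m'}}(G)\mik m'-1$ and $\ell_{R_{m'}}(s)=m'\meg m'$ are exactly what part (i) delivers, and $G\cup\{s\}=F$ is free in $R_{m'}$ by the stability properties ($\mathcal{P}$2) and ($\mathcal{P}$3) of \S 6.1, since $F$ is free in $R_n$ and $R_n\in\strong_{\infty}(R_{m'})$. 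As $m'\mik n$, condition (C3) is available for $R_{m'}$ (it is part of the inductive hypothesis of the recursive selection), and it gives $\mu\big(\bigcap_{w\in F}A_w\big)=\mu\big(\bigcap_{w\in G\cup\{s\}}A_w\big)\meg\prod_{i=0}^{|G|}\ee_i=\prod_{i=0}^{|F|-1}\ee_i$. In particular this intersection has positive measure, so the conditioning above is legitimate, and multiplying the two estimates,
\[ \mu\Big(\bigcap_{w\in F\cup\{t\}}A_w\Big)=\mu\Big(\bigcap_{w\in F}A_w\Big)\cdot\mu\Big(A_t\ \big|\ \bigcap_{w\in F}A_w\Big)\meg\Big(\prod_{i=0}^{|F|-1}\ee_i\Big)\cdot\ee_{|F|}=\prod_{i=0}^{|F|}\ee_i, \]
which is exactly condition (C3) for $R_{n+1}=W$.

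I expect the only genuinely subtle point to be the recognition that Fact \ref{f63}(i) is designed so that, once the top node $t$ is stripped off, the remaining free set $F=G\cup\{s\}$ is not an arbitrary subset of $R_n$ but lies inside $R_{m'}$ with exactly the depth and level profile demanded by condition (C3) at the smaller index $m'$; it is this that lets the argument close with no extra induction and without any delicate comparison of the various length functions $\ell_{R_j}$ against one another. The remaining verifications --- the passage from $W$ to $R_n$, the hereditary behaviour of freeness, the availability of condition (C3) for every $R_j$ with $j\mik n$, and the positivity of the conditioning event --- are all routine.
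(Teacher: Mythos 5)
Your proposal is correct and follows essentially the same route as the paper's own proof: transport the configuration $(F,t)$ from $W$ to $R_n$ using $W\upharpoonright n=R_n\upharpoonright n$ and property ($\mathcal{P}$3), apply both parts of Fact \ref{f63}, verify $G\cup\{s\}=F\in\free(R_k)$ via ($\mathcal{P}$2) and ($\mathcal{P}$3), invoke condition (C3) for the earlier tree $R_k$, and multiply the two estimates. The only additions are routine points the paper leaves implicit (the empty-$F$ case and the positivity of the conditioning event).
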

\begin{proof}
Let $F$ be a finite subset of $W$ satisfying $\mathrm{depth}_{W}(F)\mik n$ and $t\in W$ with $n+1\mik \ell_{W}(t)$ and
assume that $F\cup\{t\}\in\free(W)$. If $F$ is the empty set, then the estimate in (\ref{777}) is straightforward. So we may
assume that $F$ is non-empty. Since $W\in \strong_\infty(R_n)$ and $W\upharpoonright n=R_n\upharpoonright n$, we see that
\begin{enumerate}
\item[(a)] $F$ is a non-empty finite subset of $R_n$ with $\text{depth}_{R_n}(F)\mik n$,
\item[(b)] $n+1\mik \ell_{R_n}(t)$ and
\item[(c)] $F\cup \{t\}\in \free(R_n)$.
\end{enumerate}
By (a), (b) and (c) above and part (ii) of Fact \ref{f63}, we have the estimate
\[\mu\Big(A_{t} \ \big| \ \bigcap_{w\in F} A_w \Big)\meg\ee_{|F|}. \]
Also let $k, G$ and $s$ be as in part (i) of Fact \ref{f63}. Since $R_n\in\strong_{\infty}(R_k)$, by properties ($\mathcal{P}$2)
and ($\mathcal{P}$3) in \S 6.1, we have $G\cup\{s\}\in \free(R_k)$. Hence, by condition (C3) for the tree $R_k$ applied for the
set $G$ and the node $s$, we see that
\[ \mu\Big( \bigcap_{w\in F} A_w\Big)=\mu\Big(\bigcap_{w\in G\cup\{s\}} A_w \Big)\meg \prod_{i=0}^{|G|}\ee_i
=\prod_{i=0}^{|F|-1}\ee_i. \]
Noticing that
\[ \mu\Big(\bigcap_{w\in F\cup \{t\}}A_w\Big) = \mu\Big(A_{t} \ \big| \ \bigcap_{w\in F}A_w\Big) \cdot
\mu\big(\bigcap_{w\in F}A_w\Big) \]
and combining the previous estimates we conclude that condition (C3) is satisfied if we set $R_{n+1}=W$. The proof
of Corollary \ref{cor631} is completed.
\end{proof}
We need one more preparatory step for the selection of the tree $R_{n+1}$.
\begin{claim} \label{631c}
Let $F$ be a finite  subset of $R_n$ such that $\mathrm{depth}_{R_n}(F)\mik n$. Also let $U\in \strong_\infty(R_n)$
with $U\upharpoonright n=R_n\upharpoonright n$. Then there exists $W\in \strong_\infty(U)$ with the following properties.
\begin{enumerate}
\item[(P1)] We have $W\upharpoonright n=U\upharpoonright n$.
\item[(P2)] For every $s,t\in W$ with $s\neq t$ and such that $n+1\mik\min\{\ell_W(s),\ell_W(t)\}$,
if $F\cup\{s,t\}\in\free(W)$ then
\begin{equation} \label{51}
\mu\Big(A_{t} \ \big| \ \bigcap_{w\in F\cup \{s\}}A_w \Big)\meg \ee_{|F|+1}.
\end{equation}
\end{enumerate}
\end{claim}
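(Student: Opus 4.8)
The plan is to settle two degenerate cases first, then reduce $\mathrm{(P2)}$ to a statement about doubletons lying inside finitely many cones of $U$, and finally obtain that statement by rerunning — one level higher — the argument behind Corollary \ref{corfree}. To begin, if $F\notin\free(R_n)$ then by property $(\mathcal{P}2)$ no set of the form $F\cup\{s,t\}$ is free, so $\mathrm{(P2)}$ is vacuous and $W=U$ works. Writing $t_1,\dots,t_{|F|}$ for the enumeration of $F$ in increasing level, if $|F|\meg 2$ and $\ell(t_{|F|-1})=\ell(t_{|F|})$, then no $F\cup\{s,t\}$ with $\ell_U(s),\ell_U(t)\meg n+1$ can satisfy the first condition of Fact \ref{pfree}, so again $W=U$ works. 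Hence I may assume $F\in\free(R_n)$ with $\ell(t_1)<\dots<\ell(t_{|F|})$; put $d=\mathrm{depth}_{R_n}(F)\mik n$ and $Y=\bigcap_{w\in F}A_w$ (with $Y=\Omega$ and $d=-1$ when $F=\varnothing$). Condition (C3), applied to the appropriate $R_k$, gives $\mu(Y)>0$, and part (ii) of Fact \ref{f63} (applied in $R_n$; recall $U\upharpoonright n=R_n\upharpoonright n$ and property $(\mathcal{P}3)$) yields
\[ \mu(A_t\mid Y)\meg\ee_{|F|}\qquad\text{whenever }t\in U,\ \ell_U(t)\meg n+1,\ F\cup\{t\}\in\free(U). \]

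Next comes the combinatorial reduction. Since every node of $F$ has level at most $d$, for $F'\subseteq F$ the infimum $\wedge(F'\cup X)$ depends on $X$ only through $\wedge X$ once $\ell(\wedge X)>d$; inserting this into Fact \ref{pfree} one checks that, for distinct $s,t\in U$ with $\ell_U(s),\ell_U(t)\meg n+1$, the set $F\cup\{s,t\}$ is free if and only if $s$ and $t$ lie in a common cone $\suc_U(\bar v_j)$, where $\bar v_1,\dots,\bar v_p$ are precisely the nodes of $U$ at level $d+1$ for which $F\cup\{\bar v_j\}$ is free (for $F$ a singleton these are all the level-$(d+1)$ nodes, and for $F=\varnothing$ one takes $p=1$, $\bar v_1=$ the root, so $\suc_U(\bar v_1)=U$). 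Thus $\mathrm{(P2)}$ splits into $p$ independent requirements, the $j$-th of which concerns only doubletons of the homogeneous tree $\suc_U(\bar v_j)$ both of whose elements lie above level $n$, while the cones of $U$ avoiding $\{\bar v_1,\dots,\bar v_p\}$ impose nothing.

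It therefore suffices, for each $j$, to find a strong subtree $S_j$ of $\suc_U(\bar v_j)$ that agrees with $\suc_U(\bar v_j)$ up to level $n$ and satisfies $\mu(A_t\mid Y\cap A_s)\meg\ee_{|F|+1}$ for all distinct $s,t\in S_j$ with $\ell_U(s),\ell_U(t)\meg n+1$; one then glues the $S_j$ — together with arbitrary strong-subtree continuations of the remaining cones, arranged to share a common level set — onto $U\upharpoonright n$ to obtain $W$. Inside $\suc_U(\bar v_j)$ and relative to $\mu_Y$ this is exactly the situation of Corollary \ref{corfree}: by the displayed inequality every node of $\suc_U(\bar v_j)$ above level $n$ carries an event of $\mu_Y$-measure $\meg\ee_{|F|}$, and one wants, after refinement, $\mu_Y(A_t\mid A_s)\meg\ee_{|F|+1}=\ee_{|F|}(1-\delta_{|F|})$ for such $s,t$ — which is delivered by the two steps in the proof of Corollary \ref{corfree}: a geometric stratification of the values $\mu_Y(A_t)$ normalized by Milliken's theorem (Theorem \ref{t22}), followed by the type-$\mathrm{I}$/$\mathrm{II}$/$\mathrm{III}$ analysis from the proof of Proposition \ref{ip6} together with Lemma \ref{measure}.

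The delicate point — and, I expect, the main obstacle — is that Corollary \ref{corfree} as stated gives no control over which levels of the ambient tree survive, whereas here the bottom part of each cone $\suc_U(\bar v_j)$ (its levels $\mik n$) must be kept intact so that $W\upharpoonright n=U\upharpoonright n$ holds. Thus Corollary \ref{corfree} cannot be invoked as a black box; instead one must reexamine its proof and that of Proposition \ref{ip6} and verify that every explicit construction occurring there — the stratifying colour classes, the sequences $p^k$, $q^kp^{N-1-k}$, $(qr)^kp$ used to realize doubletons of types $\mathrm{I}$, $\mathrm{II}$, $\mathrm{III}$, and the accompanying height-$3$ strong subtrees — can be taken to involve only nodes of $\suc_U(\bar v_j)$ above level $n$, so that the strong subtrees produced leave the frozen part untouched. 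This level-aware bookkeeping is exactly what the concrete, compactness-free proof of Proposition \ref{ip6} was arranged to permit, and pushing it through is the heart of the proof of Claim \ref{631c}.
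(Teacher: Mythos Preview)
Your combinatorial reduction is sound, but you split the tree at level $d+1=\mathrm{depth}_{R_n}(F)+1$, whereas the paper splits at level $n+1$. This single choice is what manufactures the obstacle you describe in your last paragraph: because your cones $\suc_U(\bar v_j)$ contain the levels $d+1,\dots,n$, you must preserve them when refining, and Corollary~\ref{corfree} gives no such control. The paper instead enumerates $U(n+1)=\{u_1,\dots,u_d\}$ and, for each $j$ with $F\cup\{u_j\}\in\free(U)$, applies Corollary~\ref{corfree} \emph{as a black box} to the homogeneous tree $\suc_U(u_j)$ (with $Y=\bigcap_{w\in F}A_w$, $\ee=\ee_{|F|}$, $\theta=\ee_{|F|+1}$); since these cones begin at level $n+1$ there is nothing below to freeze, and the resulting subtrees $Z_j$ are then aligned to a common level set and glued onto $U\upharpoonright n$. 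So what you call ``the heart of the proof'' --- a level-aware rerun of Proposition~\ref{ip6} and Corollary~\ref{corfree} --- simply does not arise in the paper's argument.

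Your finer decomposition at level $d+1$ does buy something: it characterizes \emph{exactly} which pairs $s,t$ make $F\cup\{s,t\}$ free (those sharing a cone above some good $\bar v_j$), whereas the level-$(n+1)$ split directly captures only pairs sharing a cone at level $n+1$. But the price you pay is steep, and your sketch underestimates it: both Corollary~\ref{corfree} and Proposition~\ref{ip6} invoke Theorem~\ref{t22}, which by itself does not preserve any initial segment of the tree, so ``level-aware bookkeeping'' is not just a matter of tracking where the explicit witness nodes land. If you insist on your route, the clean way forward is to further split each $\suc_U(\bar v_j)$ at level $n+1$ and treat its level-$(n+1)$ cones as a vector tree --- at which point you have essentially reconstructed the paper's argument inside each $\bar v_j$-cone.
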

\begin{proof}
Notice that we may assume that $F$ is non-empty; indeed, for the empty set the result follows by condition (C3) for the tree $R_0$.
Let $\{u_1<_{\mathrm{lex}}...<_{\mathrm{lex}} u_d\}$ be the lexicographical increasing enumeration of the $(n+1)$-level $U(n+1)$ of $U$
(notice that $d=b_T^{n+1}$). Recursively, we will select a family $\big\{ Z_j: j\in\{1,...,d\}\big\}$ of strong subtrees of $T$ such that
the following are satisfied.
\begin{enumerate}
\item[(A1)] For every $j\in\{1,...,d\}$ we have $Z_j\in \strong_{\infty}\big(\suc_U(u_j)\big)$.
\item[(A2)] For every $j\in\{1,...,d-1\}$ we have $L_{T}\big(Z_{j+1}\big)\subseteq L_T\big(Z_{j}\big)$.
\item[(A3)] If $j\in \{1,...,d\}$ is such that $F\cup\{u_j\}\in\free(U)$, then for every  $s,t\in Z_j$
with $s\neq t$ we have
\[\mu\Big(A_{t} \ \big| \ \bigcap_{w\in F\cup\{ s\}}A_w\Big)\meg\ee_{|F|+1}.\]
\end{enumerate}
As the first step is identical to the general one, let us assume that the selection has been carried out up to some
$j\in\{1,...,d-1\}$ so that properties (A1)-(A3) are satisfied. Let $Z$ be a strong subtree of
$\suc_U(u_{j+1})$ such that $L_T(Z)=L_T(Z_{j})$; for the first step we simply set $Z=\suc_U(u_1)$.
We consider the following cases.
\medskip

\noindent \textsc{Case 1}: \textit{the set $F\cup \{u_{j+1}\}$ is not a free subset of $U$}. We set ``$Z_{j+1}=Z$"
and we observe that with this choice properties (A1)-(A3) are satisfied.
\medskip

\noindent \textsc{Case 2}: \textit{the set $F\cup \{u_{j+1}\}$ is a free subset of $U$}. In this case we see that
for every $t\in\suc_U(u_{j+1})$ the set $F\cup\{t\}$ is also a free subset of $U$. Since $U\in\strong_{\infty}(R_n)$,
by part (ii) of Fact \ref{f63}, for every $t\in\suc_U(u_{j+1})$ we have
\[ \mu\Big( A_t \ \big| \ \bigcap_{w\in F}A_w\Big)\meg \ee_{|F|}.\]
We apply Corollary \ref{corfree} for ``$T=Z$", ``$Y=\bigcap_{w\in F}A_w$", ``$\ee=\ee_{|F|}$" and ``$\theta=\ee_{|F|+1}$"
and we get $S\in\strong_{\infty}(Z)$ such that for every $s,t\in S$ we have
\begin{equation} \label{e243}
\mu\Big(A_{t} \ \big| \ \bigcap_{w\in F\cup\{s\}}A_w\Big) = \mu(A_t \ | \ Y\cap A_s)\meg \ee_{|F|+1}.
\end{equation}
We set ``$Z_{j+1}=S$" and we notice that with this choice properties (A1)-(A3) are satisfied. The recursive selection is
thus completed.

Now, for every $j\in\{1,...,d-1\}$ we select a strong subtree $W_j$ of $Z_j$ with $L_T(W_j)=L_T(Z_d)$.
We set $W_d=Z_d$ and we define
\[W=(U\upharpoonright n)\cup\bigcup_{j=1}^{d} W_j.\]
It is clear that $W\in\strong_{\infty}(U)$ and $W\upharpoonright n=U\upharpoonright n$. What remains is to show that
property (P2) is satisfied for the tree $W$. To this end, let $s,t\in W$ with $s\neq t$ and $n+1\mik\min\{\ell_W(s),\ell_W(t)\}$
and assume that $F\cup\{s,t\}\in\free(W)$. Since $W$ is a strong subtree of $U$, we see that
\[\min\{\ell_{U}(s),\ell_{U}(t)\}\meg\min\{\ell_W(s),\ell_W(t)\}\meg n+1.\]
Therefore, by Fact \ref{pfree}, there exists $j_0\in\{1,...,d\}$ such that $F\cup\{u_{j_0}\}\in\free(U)$ and
$s,t\in \suc_U(u_{j_0})\cap W= W_{j_0}\subseteq Z_{j_0}$. Hence, by (A3) above, we conclude that
property (P2) is satisfied. The proof of Claim \ref{631c} is completed.
\end{proof}
After this preliminary discussion we are ready to start the process for selecting the tree $R_{n+1}$. In particular,
let $\{F_1,...,F_m\}$ be an enumeration of the set of all subsets $F$ of $R_n$ with $\mathrm{depth}_{R_n}(F)\mik n$.
By repeated applications of Claim \ref{631c}, it is possible to construct a family $\big\{W_j:j\in\{1,...,m\}\big\}$
of strong subtrees of $R_n$ with the following properties.
\begin{enumerate}
\item[(a)] For every $j\in\{1,...,m\}$ we have $W_j\upharpoonright n=R_n\upharpoonright n$.
\item[(b)] For every $j\in\{1,...,m-1\}$ the tree $W_{j+1}$ is a strong subtree of $W_j$.
\item[(c)] For every $j\in\{1,...,m\}$ and every $s,t\in W_j$ with $s\neq t$ and such that
$n+1\mik\min\{\ell_{W_j}(s),\ell_{W_j}(t)\}$, if $F_j\cup\{s,t\}\in\free(W_j)$ then
\[ \mu\Big(A_{t} \ \big| \ \bigcap_{w\in F_j\cup \{s\}}A_w\Big)\meg \ee_{|F_j|+1}. \]
\end{enumerate}
The construction is fairly standard and the details are left to the reader. We set ``$R_{n+1}=W_m$". By (a), (b)
and (c) above, it is clear that with this choice conditions (C1), (C2) and (C4) are satisfied. On the other hand,
by Corollary \ref{cor631}, condition (C3) is also satisfied. Therefore, the recursive selection is completed.

We are finally in the position to complete the proof of Theorem \ref{it3}. We set
\begin{equation} \label{e63new}
S= \bigcup_{n\in\nn} R_n(n)
\end{equation}
and we observe that $S\in\strong_{\infty}(T)$. We will show that $S$ is the desired strong subtree. So let $G\in\free(S)$
be arbitrary. We need to prove that
\[ \mu\Big( \bigcap_{t\in G} A_t\Big) \meg \theta^{|G|}. \]
To this end, clearly, we may assume that $|G|\meg 2$. We will show, first, that
\begin{equation} \label{eqfreex}
\mu\Big(\bigcap_{t\in G}A_t\Big)\meg\prod_{i=0}^{|G|-1}\ee_i.
\end{equation}
Indeed, by conditions (C1) and (C2) of the recursive selection and the choice of the tree $S$ in (\ref{e63new}),
there exist $n\in\nn$, a (possibly empty) subset $F$ of $R_n$ satisfying $\mathrm{depth}_{R_n}(F)\mik n-1$ and
$s,t\in R_n$ with $s\neq t$ and $n=\ell_{R_n}(s)\mik \ell_{R_n}(t)$ such that $G=F\cup \{s,t\}$.
Since $S\in\strong_{\infty}(R_n)$, by property ($\mathcal{P}$3) in \S 6.1, we see that $F\cup\{s,t\}\in\free(R_n)$.
Therefore, by condition (C4) for the tree $R_n$ applied for the set $F$ and the doubleton $\{s,t\}$, we have
\begin{equation} \label{eqfreexx}
\mu\Big(A_{t} \ \big| \ \bigcap_{w\in F\cup\{s\}} A_w \Big)\meg \ee_{|F|+1}.
\end{equation}
By property ($\mathcal{P}$2) in \S 6.1, we have $F\cup\{s\}\in \free(R_n)$. Thus, by condition (C3),
\begin{equation} \label{eqfreexxx}
\mu\Big(\bigcap_{w\in F\cup\{s\}}A_w\Big) \meg \prod_{i=0}^{|F|}\ee_i.
\end{equation}
Combining (\ref{eqfreexx}) and (\ref{eqfreexxx}), we conclude that the estimate in (\ref{eqfreex}) is satisfied. Therefore,
\begin{eqnarray*}
\mu\Big(\bigcap_{t\in G}A_t\Big) & \stackrel{(\ref{eqfreex})}{\meg} &
\prod_{i=0}^{|G|-1}\ee_i \stackrel{(\ref{forme})}{=} \ee^{|G|} \cdot \Big(\prod_{i=0}^{|G|-2}(1-\delta_i)^{|G|-1-i}\Big) \\
& \meg & \ee^{|G|} \cdot \Big(\prod_{i=0}^{|G|-2}(1-\delta_i)\Big)^{|G|-1} \\
& \meg & \ee^{|G|} \cdot \Big(\prod_{i\in\nn} (1-\delta_i)\Big)^{|G|-1} \\
& \stackrel{(\ref{eqfree1})}{\meg} & \ee^{|G|} \cdot \Big(\frac{\theta}{\ee}\Big)^{|G|-1} \meg
\ee \cdot \theta^{|G|-1} \meg \theta^{|G|}.
\end{eqnarray*}
The proof of Theorem \ref{it3} is thus completed.


\section{Appendix A}

We start by introducing some pieces of notation and terminology. For every finitely branching tree $T$ and every $t\in T$
the \textit{branching number} of $t$ in $T$, denoted by $b_T(t)$, is defined to be cardinality of the set of all immediate
successors of $t$ in $T$. Next we introduce the following class of trees.
\begin{defn} \label{AA-d1}
Let $(b_n)$ be a strictly increasing sequence of positive integers. A tree $T$ will be called $(b_n)$-\emph{large} if it
is uniquely rooted, finitely branching and $b_T(t)\meg b_n$ for every $n\in\nn$ and every $t\in T(n)$.

A tree $T$ will be called \emph{large} if it is $(b_n)$-large for some strictly increasing sequence $(b_n)$
of positive integers.
\end{defn}
We gather, below, some elementary properties of large trees.
\begin{fact} \label{AA-f1}
Let $(b_n)$ be a strictly increasing sequence of positive integers and $T$ be a $(b_n)$-large tree.
Then the following hold.
\begin{enumerate}
\item[(i)] If $S\in\strong_\infty(T)$, then $S$ is $(b_n)$-large.
\item[(ii)] For every strictly increasing sequence $(c_n)$ of positive integers there exists $S\in\strong_\infty(T)$
such that $S$ is $(c_n)$-large.
\end{enumerate}
\end{fact}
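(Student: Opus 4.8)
The plan is to isolate from the definition of a strong subtree the one structural fact that drives both parts, namely that passing to a strong subtree does not change the branching number \emph{at a node}, and then read off the two statements.

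First I would prove the following observation: if $S\in\strong_{\infty}(T)$ and $s\in S$ is a non-maximal node, then $b_S(s)=b_T(s)$. Condition (c) in the definition of a strong subtree provides a map $\immsuc_T(s)\to\immsuc_S(s)$ sending $t$ to the unique immediate successor $s'$ of $s$ in $S$ with $t\mik s'$. This map is onto, since for a given $s'\in\immsuc_S(s)$ the least node strictly above $s$ in the chain $\{x\in T:s\mik x\mik s'\}$ lies in $\immsuc_T(s)$ and is $\mik s'$, and it is one-to-one, since two elements of $\immsuc_T(s)$ lying below a common node are comparable and of equal $T$-length, hence equal. Therefore $b_S(s)=|\immsuc_S(s)|=|\immsuc_T(s)|=b_T(s)$; in particular $S$ is finitely branching, and it is uniquely rooted by definition of a strong subtree. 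Moreover, since $S$ has infinite height its level set $L_T(S)=\{m_0<m_1<\cdots\}$ is an infinite subset of $\nn$, so $m_n\meg n$, while $S(n)\subseteq T(m_n)$ means $\ell_T(s)=m_n$ for every $s\in S(n)$. Part (i) follows immediately: for $s\in S(n)$,
\[ b_S(s)=b_T(s)\meg b_{\ell_T(s)}=b_{m_n}\meg b_n, \]
where the middle inequality is the hypothesis that $T$ is $(b_n)$-large and the last one uses that $(b_n)$ is increasing together with $m_n\meg n$; hence $S$ is $(b_n)$-large.

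For part (ii) I would first note that a strictly increasing sequence of positive integers tends to infinity, so one can choose recursively integers $m_0<m_1<m_2<\cdots$ with $b_{m_n}\meg c_n$ for every $n$. Also, $b_0\meg 1$ forces every node of $T$ to have an immediate successor, so $T$ is pruned and each node of $T$ has successors on every higher level. I then build $S\in\strong_{\infty}(T)$ with level set exactly $\{m_0<m_1<\cdots\}$ by the usual level-by-level recursion: take $S(0)=\{t_0\}$ for an arbitrary $t_0\in T(m_0)$, and given $S(n)\subseteq T(m_n)$, for each $s\in S(n)$ and each $u\in\immsuc_T(s)$ pick some $\sigma(u)\in\suc_T(u)\cap T(m_{n+1})$ (possible since $T$ is pruned) and set $\immsuc_S(s)=\{\sigma(u):u\in\immsuc_T(s)\}$. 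A routine verification shows that $S$ so defined is a strong subtree of $T$ with $L_T(S)=\{m_n:n\in\nn\}$; the only point that is not automatic is condition (c), which holds because for $t,u\in\immsuc_T(s)$ one has $t\mik\sigma(u)$ precisely when $t=u$. Applying the observation of the previous paragraph to this $S$, for $s\in S(n)$ we get $b_S(s)=b_T(s)\meg b_{m_n}\meg c_n$, so $S$ is $(c_n)$-large.

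I do not expect a genuine obstacle here; the content is entirely in the bijection $\immsuc_T(s)\leftrightarrow\immsuc_S(s)$ and in the elementary remark that an infinite level set satisfies $m_n\meg n$. If one point needs care it is the verification that the recursion in part (ii) really produces a strong subtree (conditions (a)--(c) of the definition), which in turn relies on the observation that a strictly increasing $(b_n)$ makes $T$ pruned, exactly what prevents that recursion from getting stuck.
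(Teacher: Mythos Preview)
The paper does not supply a proof of this Fact, presenting it as an elementary property of large trees. Your argument is correct and is precisely the routine verification the paper omits: the bijection $\immsuc_T(s)\leftrightarrow\immsuc_S(s)$ coming from clause (c) of the definition of a strong subtree gives $b_S(s)=b_T(s)$, which together with $m_n\meg n$ yields (i), and the level-by-level construction of $S$ with prescribed level set $\{m_n:n\in\nn\}$ satisfying $b_{m_n}\meg c_n$ yields (ii).
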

We have the following trichotomy.
\begin{prop} \label{AA-p1}
For every uniquely rooted, pruned and finitely branching tree $T$ there exists a strong subtree $S$ of $T$ of infinite height
such that either
\begin{enumerate}
\item[(i)] $S$ is a chain, or
\item[(ii)] $S$ is homogeneous, or
\item[(iii)] $S$ is large.
\end{enumerate}
\end{prop}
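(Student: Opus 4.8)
The plan is to run the whole argument off one monotone numerical invariant. For a node $t$ of $T$ let $\beta(t)\in\{1,2,\dots\}\cup\{\infty\}$ be the supremum of those $n\meg 1$ for which $\suc_T(t)$ contains a strong subtree of infinite height every node of which has at least $n$ immediate successors; this is well defined and $\meg 1$ because $\suc_T(t)$ is pruned and hence is itself such a strong subtree with $n=1$. Since $\suc_T(t')\subseteq\suc_T(t)$ whenever $t\mik t'$, and a strong subtree of $\suc_T(t')$ is again a strong subtree of $\suc_T(t)$, the function $\beta$ is non-increasing along every branch of $T$. The proof splits according to the value $\beta(\rho)$ at the root $\rho$, and uses throughout Milliken's theorem in the form valid for every uniquely rooted, finitely branching tree (as recalled in the introduction): any finite colouring of the nodes of such a tree is constant on the nodes of some strong subtree of infinite height.

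Suppose first that $\beta(\rho)$ is finite. If $\beta(\rho)=1$, then no strong subtree of $T$ has all its branching numbers $\meg 2$; colouring each node $t$ by whether $b_T(t)=1$ and applying Milliken's theorem yields a strong subtree of infinite height all of whose nodes have exactly one immediate successor, i.e.\ a chain --- case (i). If $2\mik\beta(\rho)=B<\infty$, fix a strong subtree $W$ of infinite height all of whose branching numbers are $\meg B$; then every strong subtree of $W$ still has all branching numbers $\meg B$, while none can have them all $\meg B+1$ (that would force $\beta(\rho)\meg B+1$), so colouring each node of $W$ by whether its branching number equals $B$ or exceeds $B$ and applying Milliken's theorem inside $W$ yields a strong subtree of infinite height every node of which has exactly $B\meg 2$ immediate successors, that is, a homogeneous tree --- case (ii).

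Now suppose $\beta(\rho)=\infty$. If some node $t$ has $\beta(t)<\infty$, then applying the previous paragraph to the tree $\suc_T(t)$ (whose own invariant at its root equals $\beta(t)$) produces a chain or a homogeneous strong subtree, and we are done; so assume $\beta(t)=\infty$ for \emph{every} $t\in T$. Then above each node, and for each $N$, there are nodes with more than $N$ immediate successors, occurring at arbitrarily high levels. The goal is to build recursively a strong subtree $S$ of infinite height, together with a strictly increasing sequence $(d_j)$, so that every node on level $j$ of $S$ has at least $d_j$ immediate successors; thinning $S$ by means of Fact~\ref{AA-f1} then yields a $(b_n)$-large strong subtree for a suitable strictly increasing $(b_n)$, which is case (iii). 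At the one-step extension one has a finite strong subtree with leaves on some level $m_j$ of $T$, and one must choose a common level $m_{j+1}$ together with, above each immediate successor of each leaf, a representative on level $m_{j+1}$ having at least $d_{j+1}$ immediate successors.

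The main obstacle --- and the only genuinely delicate point --- is the alignment of levels in this last step: the high-branching descendants required above the different immediate successors of the leaves a priori occur on mutually incompatible levels of $T$, so a single common level $m_{j+1}$ need not exist for a careless choice. The way I would overcome this is to run the recursion as a fusion: one maintains a decreasing sequence $U_0\supseteq U_1\supseteq\cdots$ of strong subtrees of $T$ of infinite height, with $U_{j+1}$ agreeing with $U_j$ up to level $j$ and with all branching numbers beyond level $j$ at least $c_{j+1}$ (where $c_j\to\infty$), and then sets $S=\bigcup_j U_j(j)$. The existence of such a $U_{j+1}$ inside $U_j$, together with the agreement condition, is where the level alignment is actually carried out: it rests on the fact that $U_j$, having no chain and no homogeneous strong subtree, again satisfies that its invariant is identically $\infty$, combined with a refinement above the finitely many leaves on level $j$ of $U_j$ that can be performed compatibly. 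I expect this compatible refinement to be the technical heart of the argument; the rest is a routine combination of Milliken's theorem with the monotonicity of $\beta$.
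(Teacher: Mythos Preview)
Your treatment of the case $\beta(\rho)<\infty$ is correct and pleasantly clean: the two applications of single-tree Milliken (to the colourings ``$b_T(t)=1$ or not'' and ``$b_W(t)=B$ or $>B$'') do produce a chain, respectively a homogeneous strong subtree, exactly as you claim.

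The gap is in the case $\beta\equiv\infty$, and it is a real one. You correctly isolate the difficulty --- aligning the levels of the high-branching refinements above the finitely many nodes on level $j$ --- but you do not resolve it, and the invariant $\beta$ gives you no leverage here. Concretely: knowing that $\beta_T(t)=\infty$ for every $t$ tells you that each $\suc_T(t)$ contains a strong subtree with all branching numbers $\geqslant N$, but it says nothing about $\beta_{U_j}$, and in particular nothing forces these high-branching subtrees, taken above the various immediate successors of the level-$j$ nodes of $U_j$, to share a common level set. (Your parenthetical ``$U_j$, having no chain and no homogeneous strong subtree'' is also not justified by $\beta\equiv\infty$: a tree with $\beta\equiv\infty$ may perfectly well contain chains or homogeneous strong subtrees.) The ``compatible refinement'' you hope for is exactly a vector statement: given the finite family of trees $\suc_{U_j}(t)$ for $t$ ranging over the level-$(j{+}1)$ nodes of $U_j$, and the $2$-colouring of their nodes by whether $b_T(\,\cdot\,)\geqslant c_{j+1}$, one needs a \emph{vector} strong subtree on which the colour is constant. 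Single-tree Milliken applied to $U_j$ does not help, because it will in general destroy $U_j\upharpoonright j$.

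This is precisely what the paper does: it assumes (i) and (ii) fail, and then performs the fusion you describe, invoking the Halpern--L\"auchli theorem at each step to carry out the level alignment above the finitely many nodes of the current front. Your $\beta$-organisation is a nice way to make the (i)/(ii) alternatives explicit rather than merely negated, but it does not buy you anything in case (iii): the missing ingredient is Halpern--L\"auchli, and once you invoke it the recursion is routine, as in the paper.
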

\begin{proof}
Assume that neither (i) nor (ii) are satisfied. Recursively and using the Halpern--L\"{a}uchli Theorem \cite{HL}, we may select a sequence $(R_n)$
of strong subtrees of $T$ of infinite height such that for every $n\in\nn$ the following hold.
\begin{enumerate}
\item[(C1)] The tree $R_{n+1}$ is a strong subtree of $R_n$.
\item[(C2)] We have $R_{n+1}\upharpoonright n=R\upharpoonright n$.
\item[(C3)] For every $t\in \bigcup_{m=n}^\infty  R_n(m)$ we have $b_T(t)\meg n+1$.
\end{enumerate}
The recursive selection is fairly standard and the details are left to the reader. Let
\begin{equation} \label{AA-e1}
S=\bigcup_{n\in\nn} R_n(n).
\end{equation}
By conditions (C1) and (C2), we have that $S\in\strong_\infty(T)$. On the other hand, by condition (C3), we see that $S$
is a $(b_n)$-large tree where $b_n=n+1$ for every $n\in\nn$. The proof is completed.
\end{proof}
We remark that, by Proposition \ref{AA-p1}, Theorem \ref{it1} still holds if the events are indexed by a uniquely rooted, pruned
and boundedly branching tree.

On the other hand, if $T$ is a uniquely rooted, pruned and finitely branching tree not containing a strong subtree of infinite height
which is either a chain or homogeneous then, by Proposition \ref{AA-p1} and Fact \ref{AA-f1}, for every strictly increasing sequence $(b_n)$
of positive integers there exists a strong subtree of $T$ which is $(b_n)$-large. Concerning this class of trees we have the following.
\begin{prop} \label{AA-p2}
Let $0<\delta<1$. Also let $(b_n)$ be a strictly increasing sequence of positive integers such that
\begin{equation} \label{sum}
\sum_{n\in\nn} \frac{1}{b_n} \mik \delta.
\end{equation}
Then for every $(b_n)$-large tree $T$ there exists a family $\{A_t:t\in T\}$ of Borel subsets of the interval $[0,1]$ satisfying
$\lambda(A_t)\meg 1-\delta$ for every $t\in T$ and such that
\begin{equation} \label{AA-e2}
\bigcap_{t\in F} A_t=\varnothing
\end{equation}
for every $F\in\strong_2(T)$.
\end{prop}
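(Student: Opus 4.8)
The plan is to construct the family $\{A_t:t\in T\}$ explicitly, by coding the branching structure of $T$ into the coordinates of a product realization of the unit interval. First I would identify, as measure spaces, $([0,1],\lambda)$ with the product probability space $(\Omega,\mu)=\big(\prod_{n\in\nn}\{0,\dots,b_n-1\},\ \bigotimes_{n\in\nn}u_n\big)$, where $u_n$ is the uniform probability measure on $\{0,\dots,b_n-1\}$; this is legitimate since $(\Omega,\mu)$ is a non-atomic standard probability space and hence measure-isomorphic, modulo null sets, to $([0,1],\lambda)$, so any family of Borel subsets of $\Omega$ with the required properties transports to one of $[0,1]$ (alternatively one can describe the $A_t$ directly as unions of subintervals coming from the mixed-radix expansion in bases $b_0,b_1,\dots$). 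Write $\xi_n\colon\Omega\to\{0,\dots,b_n-1\}$ for the $n$-th coordinate map. Since a $(b_n)$-large tree need not be homogeneous, I would then fix an arbitrary enumeration of $\immsuc_T(t)$ for every $t\in T$; for $s\in T$ and $0\mik n<\ell_T(s)$ let $\pi_n(s)$ be the unique node of $T(n)$ with $\pi_n(s)\mik s$, and let $d_n(s)\in\{0,\dots,b_T(\pi_n(s))-1\}$ be the position, in the chosen enumeration of $\immsuc_T(\pi_n(s))$, of the immediate successor of $\pi_n(s)$ lying below $s$. Finally, for $t\in T$ with $\ell_T(t)=m$ I would set
\[
A_t=\big\{\omega\in\Omega:\ \xi_n(\omega)\neq d_n(t)\bmod b_n\ \text{ for all }n\in\{0,\dots,m-1\}\big\},
\]
with the convention $A_t=\Omega$ when $t$ is the root.

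The measure lower bound is then immediate: the events $\{\omega:\xi_n(\omega)=d_n(t)\bmod b_n\}$ for $n<m$ are jointly independent and each has $\mu$-measure $1/b_n$, so $\mu(A_t)=\prod_{n<m}(1-1/b_n)\meg 1-\sum_{n\in\nn}1/b_n\meg 1-\delta$ by hypothesis (\ref{sum}).

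For the emptiness of the intersections, fix $F\in\strong_2(T)$ and let $r=F(0)\in T(n_0)$. Applying condition (c) of the definition of a strong subtree to the non-maximal node $r$, the assignment $v\mapsto(\text{the unique }w\in F(1)\text{ with }v\mik w)$ is a well-defined map $\immsuc_T(r)\to F(1)$; using that each $w\in F(1)$ lies above a unique immediate successor of $r$ in $T$, one checks readily that this map is a bijection. Hence $|F(1)|=b_T(r)$, and $b_T(r)\meg b_{n_0}$ since $T$ is $(b_n)$-large. Put $k=b_T(r)$ and enumerate $F(1)=\{w_0,\dots,w_{k-1}\}$ so that the $j$-th immediate successor of $r$ (in the enumeration fixed above) lies below $w_j$; then $\pi_{n_0}(w_j)=r$ and $d_{n_0}(w_j)=j$, so $A_{w_j}\subseteq\{\omega:\xi_{n_0}(\omega)\neq j\bmod b_{n_0}\}$. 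As $k\meg b_{n_0}$, we get $\{j\bmod b_{n_0}:0\mik j<k\}=\{0,\dots,b_{n_0}-1\}$, and therefore
\[
\bigcap_{t\in F}A_t\ \subseteq\ \bigcap_{j=0}^{k-1}A_{w_j}\ \subseteq\ \big\{\omega\in\Omega:\ \xi_{n_0}(\omega)\notin\{0,\dots,b_{n_0}-1\}\big\}\ =\ \varnothing .
\]

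I do not expect a genuine obstacle here. The only point that needs attention is the counting step: the root of a height-$2$ strong subtree of a $(b_n)$-large tree has at least $b_{n_0}$ immediate successors, and each of them is met by exactly one node of $F(1)$ — this is precisely what makes the single coordinate $\xi_{n_0}$ enough to annihilate $\bigcap_{t\in F}A_t$. This is combined with the harmless device of reducing the (possibly very large) branching indices $d_n(t)$ modulo $b_n$, which keeps the measure loss in the $n$-th coordinate equal to $1/b_n$ and hence, by (\ref{sum}), keeps every $A_t$ of measure at least $1-\delta$.
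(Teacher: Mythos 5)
Your proof is correct and is essentially the paper's construction in concrete form: the paper recursively sets $A_s=A_t\setminus\Delta_s$ where $\{\Delta_s: s\in\immsuc_T(t)\}$ partitions $A_t$ into pieces of equal measure, while your digit-coded sets satisfy $A_s=A_t\setminus\big(A_t\cap\{\xi_{\ell_T(t)}\equiv d_{\ell_T(t)}(s)\ (\mathrm{mod}\ b_{\ell_T(t)})\}\big)$, the same mechanism with the removed pieces indexed by residues modulo $b_n$ rather than by the (possibly more numerous) immediate successors. In both arguments the decisive points coincide: the measure lost at level $n$ is at most $1/b_n$, so (\ref{sum}) gives $\lambda(A_t)\meg 1-\delta$, and condition (c) in the definition of a strong subtree forces $F(1)$ to meet every immediate-successor direction of $F(0)$, which makes the intersection empty.
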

\begin{proof}
We fix a $(b_n)$-large tree $T$. The family $\{A_t:t\in T\}$ will be defined by recursion on the length of nodes in $T$.
For $n=0$ we set $A_{T(0)}=[0,1]$. Let $n\in\nn$ and assume that we have defined the family $\{A_t:t\in T(n)\}$.
Let $t\in T(n)$ be arbitrary. We partition the set $A_t$ into a family $\{\Delta_s: s\in\immsuc_T(t)\}$ of Borel
sets of equal measure and for every $s\in\immsuc_T(t)$ we set
\begin{equation} \label{AA-e3}
A_s= A_t\setminus \Delta_s.
\end{equation}
We notice two properties guaranteed by the above construction.
\begin{enumerate}
\item[(P1)] For every $t\in T$ and every $w\in\suc_T(t)$ we have $A_w\subseteq A_t$.
\item[(P2)] For every $n\in\nn$, every $t\in T(n)$ and every $s\in\immsuc_T(t)$ we have $\lambda(A_s)= \lambda(A_t) \cdot \big(1
- b_T(t)^{-1}\big)\meg
\lambda(A_t) - 1/b_n$.
\end{enumerate}
Therefore, for every $t\in T$ we have
\[ \lambda(A_t)\meg \lambda(A_{T(0)}) -\sum_{n\in\nn} \frac{1}{b_n} \stackrel{(\ref{sum})}{\meg} 1-\delta.\]
Finally if $F\in\strong_2(T)$, then
\[\bigcap_{t\in F} A_t\subseteq \bigcap_{t\in F(1)} A_t \subseteq \bigcap_{s\in\immsuc_T(F(0))} A_s=\varnothing.\]
The proof is completed.
\end{proof}


\section{Appendix B}

Our goal is this appendix is to give the proof of the following result.
\begin{prop} \label{AB-p}
Let $k\in\nn$ with $k\meg 1$. Then for every uniquely rooted and balanced tree $T$ of height $k$ and every non-empty finite subset
$F$ of $T$ there exists a strong subtree $S$ of $T$ with $h(S)\mik \min\{k,2|F|-1\}$ such that $F\subseteq S$.
\end{prop}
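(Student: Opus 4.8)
The plan is to pass to the infimum-closure of $F$, to bound both its cardinality and its number of distinct levels, and then to build $S$ by a level-by-level recursion. Throughout, infima are taken in $T$.

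First I would introduce $\hat F$, the smallest subset of $T$ containing $F$ and closed under the binary infimum $\wedge$; the three-point identity in trees (among $a\wedge b$, $a\wedge c$ and $b\wedge c$ the two smallest coincide) shows $\hat F=F\cup\{s\wedge t:s,t\in F\}$, so $\hat F$ is finite. The key combinatorial input is that $|\hat F|\mik 2|F|-1$. To see this I would view $\hat F$ as a finite uniquely rooted tree under the induced order and argue: every maximal node of $\hat F$ belongs to $F$ (if $w=s\wedge t\notin F$ with $s\neq t$ in $F$ then $w<s\in\hat F$), and every $w\in\hat F\setminus F$ is a \emph{branching} node of $\hat F$ --- writing $w=s\wedge t$ with $s\neq t$ in $F$, the nodes $s,t$ are $<$-incomparable and strictly above $w$, hence lie in two distinct immediate-successor directions of $w$ in $T$, which forces $w$ to have at least two immediate successors inside $\hat F$. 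Since in a finite rooted tree the number of branching nodes is at most one less than the number of leaves (an easy induction), this gives $|\hat F\setminus F|\mik|F|-1$, hence $|\hat F|\mik 2|F|-1$. Writing $L=\{\ell_T(w):w\in\hat F\}=\{m_0<\cdots<m_{h-1}\}$, we get $h\mik|\hat F|\mik 2|F|-1$ and $h\mik k$ since $L\subseteq\{0,\dots,k-1\}$. So it is enough to build a strong subtree $S$ of $T$ with level set $L_T(S)=L$ and $\hat F\subseteq S$: then $F\subseteq\hat F\subseteq S$ and $h(S)=h\mik\min\{k,2|F|-1\}$.

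For the construction I would use the defining property of $L$: no node of $\hat F$ has length strictly between two consecutive elements of $L$, so every $w\in\hat F$ with $\ell_T(w)>m_j$ already satisfies $\ell_T(w)\meg m_{j+1}$. Writing $t\upharpoonright m$ for the ancestor of $t$ at level $m$, I would define the levels $S(0)\subseteq T(m_0),\dots,S(h-1)\subseteq T(m_{h-1})$ recursively, keeping the invariant that $w\upharpoonright m_j\in S(j)$ for all $w\in\hat F$ with $\ell_T(w)\meg m_j$. Start with $S(0)=\{\wedge F\}$ (legitimate since $\ell_T(\wedge F)=m_0$). Given $S(j)$, for each $s\in S(j)$ and each immediate successor $u$ of $s$ in $T$ put one node above $u$ into $S(j+1)$: if some $w\in\hat F$ has $u\mik w$, then $\ell_T(w)\meg m_{j+1}$ and the node $w\upharpoonright m_{j+1}$ does not depend on the choice of $w$ (for two such nodes $w,w'$, the node $w\wedge w'\in\hat F$ lies above $u$, so $\ell_T(w\wedge w')\meg m_{j+1}$, whence $w\upharpoonright m_{j+1}=w'\upharpoonright m_{j+1}$); put that node into $S(j+1)$. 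If no node of $\hat F$ lies above $u$, put into $S(j+1)$ an arbitrary node of $T$ of length $m_{j+1}$ above $u$, which exists because $T$ is balanced of height $k$ and $m_{j+1}\mik k-1$. Since distinct pairs $(s,u)$ give nodes in pairwise $<$-incomparable subtrees of $T$, the resulting map $u\mapsto(\text{node above }u)$ is a bijection of $\immsuc_T(s)$ onto $\immsuc_S(s)$ for each $s\in S(j)$, which is exactly clause (c) in the definition of a strong subtree; clauses (a) and (b) are clear. The invariant persists: for $w\in\hat F$ with $\ell_T(w)\meg m_{j+1}$, by induction $w\upharpoonright m_j\in S(j)$, the node $u:=w\upharpoonright(m_j+1)$ is an immediate successor of $w\upharpoonright m_j$ in $T$ with $u\mik w\in\hat F$, so $w\upharpoonright m_{j+1}$ was placed into $S(j+1)$. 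Taking $S=\bigcup_{j<h}S(j)$ and applying the invariant to each $w\in\hat F$ with $\ell_T(w)=m_j$ gives $w=w\upharpoonright m_j\in S(j)$, so $\hat F\subseteq S$; by construction $h(S)=h$ and $L_T(S)=L$.

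The step I expect to be the main obstacle is the choice of ``padding'' nodes in the recursion. Choosing an arbitrary extension of $u$ whenever no node of $\hat F$ lies above $u$ is \emph{not} safe --- a small example in the dyadic tree shows this can cut off a node of $\hat F$ sitting several levels higher --- and the argument works only thanks to the observation that all nodes of $\hat F$ above a fixed $u$ share the same restriction to level $m_{j+1}$, which is what makes the non-arbitrary clause well-defined and keeps the invariant alive. The remaining pieces (the cardinality bound $|\hat F|\mik 2|F|-1$, the strong-subtree axioms, and the use of balancedness for the existence of padding nodes) are routine, and the degenerate cases $|F|=1$ and $h=k$ present no difficulty.
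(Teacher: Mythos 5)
Your proposal is correct, and it takes a genuinely different route from the paper. The paper proves Proposition \ref{AB-p} by induction on the height $k$ of $T$: it sets $w_0=\wedge_T F$, splits $F$ among the immediate successors $t$ of $w_0$ whose successor sets meet $F$, applies the inductive hypothesis inside each $\suc_T(t)$ to get strong subtrees $W_t$ of height at most $2|F\cap\suc_T(t)|-1$, then glues suitably chosen strong subtrees $S_t$ with a common level set over \emph{all} immediate successors of $w_0$, and finishes with a case analysis ($|I(F)|=1$ versus $|I(F)|\meg 2$) to verify the height bound. You instead argue directly: you pass to the $\wedge$-closure $\hat F$, bound $|\hat F|\mik 2|F|-1$ by the leaves-versus-branching-nodes count (every element of $\hat F\setminus F$ branches in $\hat F$, every maximal element lies in $F$), and then build $S$ level by level with level set exactly $\{\ell_T(w):w\in\hat F\}$, the key point being the well-definedness of the chosen extension above each immediate successor $u$ (all elements of $\hat F$ above $u$ agree up to the next relevant level, since their pairwise infima lie in $\hat F$). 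Your argument checks out, including the use of balancedness only to supply padding nodes and the verification of clause (c) of the strong-subtree definition; it also yields the slightly more precise statement that the level set of $S$ may be taken to be the set of levels of the infimum-closure of $F$, which matches the sharpness example (Example \ref{AB-examp}) in the paper. What the paper's induction buys in exchange is that it sidesteps the explicit invariant bookkeeping of a direct construction, at the cost of the final two-case counting argument.
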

Since every homogeneous tree is uniquely rooted and balanced, by Proposition \ref{AB-p} we get the following.
\begin{cor} \label{AB-cor}
Let $T$ be a homogeneous tree and $n\in\nn$ with $n\meg 1$. Then every subset $F$ of $T$ of cardinality $n$ is contained in a strong
subtree of $T$ of height $2n-1$.
\end{cor}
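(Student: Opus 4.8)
\medskip
\noindent\textbf{Proof proposal.} Since every homogeneous tree is uniquely rooted and balanced and every finite subset of it lies inside an initial (hence uniquely rooted, balanced, finite-height) subtree, Corollary~\ref{AB-cor} will follow at once from Proposition~\ref{AB-p}; so the plan is to prove the Proposition, and the strategy is to construct $S$ explicitly. First I would pass to the \emph{meet-closure} $\overline F=\{\wedge_T G:\varnothing\neq G\subseteq F\}$ of $F$; this set is closed under the infimum operation of $T$, and its $<$-least element is $\wedge_T F$. The first key step is the bound $|\overline F|\mik 2|F|-1$: the maximal nodes of $\overline F$, and more generally all nodes of $\overline F$ having at most one successor in $\overline F$, belong to $F$ (any other node of $\overline F$ is a proper infimum whose arguments are spread over at least two of its immediate successors, hence a branching node of $\overline F$), so there are at most $|F|$ such nodes; and since the number of branching nodes of a nonempty finite tree is strictly below its number of leaves, and the leaves of $\overline F$ lie in $F$, there are at most $|F|-1$ branching nodes; adding the two estimates yields the bound.

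Next let $L=\{m_0<\dots<m_{h-1}\}$ be the set of lengths of the nodes of $\overline F$, so that $h=|L|\mik|\overline F|\mik 2|F|-1$, also $h\mik k$ since $L\subseteq\{0,\dots,k-1\}$, and $m_0=\ell_T(\wedge_T F)$. I would then build the strong subtree $S$ of $T$ with level set $L$ and root $\wedge_T F$ recursively: put $S(0)=\{\wedge_T F\}$, and having defined $S(j)\subseteq T(m_j)$ for some $j<h-1$, for each $s\in S(j)$ and each $c\in\immsuc_T(s)$ choose a node $\sigma(c)\in T(m_{j+1})$ extending $c$ by the rule: if some $w\in\overline F$ satisfies $\ell_T(w)\meg m_{j+1}$, $w\upharpoonright m_j=s$ and $w\upharpoonright(m_j+1)=c$, set $\sigma(c)=w\upharpoonright m_{j+1}$; otherwise let $\sigma(c)$ be any node of $T(m_{j+1})$ above $c$ (which exists because $T$ is balanced of height $k$, so every node of length $<k-1$ has successors at every larger length up to $k-1$). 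Then set $\immsuc_S(s)=\{\sigma(c):c\in\immsuc_T(s)\}$ and $S(j+1)=\bigcup_{s\in S(j)}\immsuc_S(s)$.

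Three verifications remain. First, $F\subseteq S$: for $t\in F$, induction on $i$ shows that $t\upharpoonright m_i\in S(i)$ whenever $m_i\mik\ell_T(t)$, the inductive step being the first clause in the definition of $\sigma$ applied with $w=t$. Second, $S$ is a strong subtree of $T$: it is uniquely rooted and $S(j)\subseteq T(m_j)$ by construction; it is balanced because $\immsuc_S(s)\neq\varnothing$ for every $s\in S(j)$ with $j<h-1$; and the branching condition holds since for each $c\in\immsuc_T(s)$ the node $\sigma(c)$ is an immediate successor of $s$ in $S$ lying above $c$ (as $\sigma(c)\meg c$), and it is the only one, because any node of $S(j+1)$ above $c$ has $c$ as its restriction to length $m_j+1$. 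Third, $h(S)=h=|L|\mik\min\{k,2|F|-1\}$, as required; Corollary~\ref{AB-cor} then follows by applying this to an initial subtree of the homogeneous tree large enough to contain $F$.

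The step I expect to be the main obstacle is showing that the rule defining $\sigma(c)$ is unambiguous, i.e.\ that if $w,w'\in\overline F$ both satisfy $w\upharpoonright m_j=w'\upharpoonright m_j=s$ and $w\upharpoonright(m_j+1)=w'\upharpoonright(m_j+1)=c$, then $w\upharpoonright m_{j+1}=w'\upharpoonright m_{j+1}$. This is precisely where closure of $\overline F$ under infima is used: $w\wedge_T w'\in\overline F$ and $w\wedge_T w'\meg c$, so $\ell_T(w\wedge_T w')>m_j$ and hence $\ell_T(w\wedge_T w')\meg m_{j+1}$, which forces $w\upharpoonright m_{j+1}=(w\wedge_T w')\upharpoonright m_{j+1}=w'\upharpoonright m_{j+1}$. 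Everything else reduces to bookkeeping with lengths and the definition of a strong subtree.
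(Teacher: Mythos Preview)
Your argument is correct, and it takes a genuinely different route from the paper's own proof of Proposition~\ref{AB-p}. The paper argues by induction on the height $k$: at the inductive step one sets $w_0=\wedge_T F$, applies the induction hypothesis inside each $\suc_T(t)$ for $t\in\immsuc_T(w_0)$ meeting $F$, obtains strong subtrees $W_t$ with $h(W_t)\mik 2|F\cap\suc_T(t)|-1$, and then glues these over the common level set $L=\bigcup_t L_T(W_t)$; the bound $h(S)\mik 2|F|-1$ is finished off by a short case split on whether one or several immediate successors of $w_0$ are used. By contrast, you bypass the induction entirely: passing to the meet-closure $\overline F$ and using the elementary count ``branching nodes $<$ leaves'' gives $|\overline F|\mik 2|F|-1$ in one stroke, and the level set $L=\{\ell_T(w):w\in\overline F\}$ together with the well-definedness lemma for $\sigma$ (which is exactly where meet-closure is exploited) yields the strong subtree directly. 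Your approach is more conceptual and exposes cleanly why the constant is $2$; the paper's is more hands-on but achieves the same bound with slightly less structural overhead. Both arrive at $h(S)\mik\min\{k,2|F|-1\}$, from which the Corollary is immediate.
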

Before we give the proof of Proposition \ref{AB-p} let us remark that the estimate on the height of the strong subtree obtained by
Corollary \ref{AB-cor} is sharp.
\begin{examp} \label{AB-examp}
For every integer $i\meg 1$ let $t_i=0^{2i}1\in 2^{<\nn}$. Observe that for every pair of integers $1\mik i< j$ we have $t_i\wedge t_j=0^{2i}$.
Now, fix an integer $n\meg 2$ and set $A_n=\big\{t_i: i\in\{1,...,n\}\big\}$. Let $S$ be an arbitrary strong subtree of $2^{<\nn}$ with
$A_n\subseteq S$. Notice, first, that the level set of $S$ must contain the set $\big\{2i+1:i\in\{1,...,n\}\big\}$. Since strong subtrees
preserve infima, we see that $\{t_i\wedge t_{i+1}:i\in\{1,...,n-1\}\big\}\subseteq S$, and so, the level set of $S$ must also contain the
set $\big\{2i:i\in\{1,...,n-1\}\big\}$. Therefore, the height of $S$ is at least $2n-1$.
\end{examp}
We proceed to the proof of Proposition \ref{AB-p}.
\begin{proof}[Proof of Proposition \ref{AB-p}]
The result will be proved by induction on $k$. The case $k=1$ is straightforward. Let $k\in\nn$ with $k\meg 1$ and assume that the
result has been proved for every uniquely rooted and balanced tree of height at most $k$. Let $T$ be a uniquely rooted and balanced
tree of height $k+1$ and $F$ be a non-empty finite subset of $T$. We need to find a strong subtree $S$ of $T$ with
$h(S)\mik \min\{k+1,2|F|-1\}$ such that $F\subseteq S$. Clearly we may assume that $|F|\meg 2$.

Let $w_0=\wedge_T F$ be the infimum of $F$ in $T$ and set
 \[I(F)=\big\{ t\in\immsuc_{T}(w_0):  F\cap \suc_{T}(t)\neq \varnothing\big\}.\]
Notice that
\begin{equation} \label{eqh}
\bigcup_{t\in I(F)} \big(F\cap \suc_{T}(t)\big) \subseteq F\subseteq \{w_0\}\cup \bigcup_{t\in I(F)} \big(F\cap \suc_{T}(t)\big)
\end{equation}
and so
\begin{equation} \label{eqhhh}
\sum_{t\in I(F)} |F\cap \suc_{T}(t)| \mik |F| \mik 1 + \sum_{t\in I(F)} |F\cap \suc_{T}(t)|.
\end{equation}
Observe that $I(F)$ is non-empty (for if not, by (\ref{eqh}), we would have that $F=\{w_0\}$).

Let $t\in I(F)$ be arbitrary. Since $T$ is a balanced tree of height $k+1$, we see that $\suc_T(t)$ is a uniquely rooted and balanced tree
of height at most $k$. By our inductive assumptions, there exists a strong subtree $W_t$ of $\suc_{T}(t)$ such that
\begin{equation} \label{eqx}
F\cap\suc_{T}(t)\subseteq W_t
\end{equation}
and
\begin{equation} \label{eqnew}
h(W_t)\mik 2|F\cap\suc_{T}(t)|-1.
\end{equation}
Observe that $|F\cap \suc_{T}(t)|<|F|$ (for if not, we would have that $F\subseteq \suc_{T}(t)$ which yields that
$w_0\in\suc_T(t)$, a contradiction). Therefore,
\begin{equation} \label{hw}
h(W_t)\mik 2|F|-2.
\end{equation}

We set
\begin{equation} \label{L}
L=\bigcup_{t\in I(F)}L_T(W_t)
\end{equation}
and we select a family $\{S_t:t\in\immsuc_T(w_0)\}$ of strong subtrees of $T$ such that
\begin{enumerate}
\item[(P1)] $S_t\subseteq \suc_T(t)$ and $L_T(S_t)=L$ for every $t\in\immsuc_{T}(w_0)$, and
\item[(P2)] $W_t\subseteq S_t$ for every $t\in I(F)$.
\end{enumerate}
Such a selection is possible since the tree $T$ is balanced. Finally, let
\begin{equation} \label{s}
S=\{w_0\} \cup \big\{S_t: t\in\immsuc_{T}(w_0)\big\}.
\end{equation}
By (\ref{eqh}) and properties (P1) and (P2), we see that $S$ is a strong subtree of $T$ and that $F\subseteq S$.
The proof will be completed once we show that $h(S)\mik 2|F|-1$. Indeed notice that, by (\ref{s}) and property (P1), we have
\begin{equation}
\label{hs}h(S)=|L|+1.
\end{equation}
We consider the following cases.
\medskip

\noindent \textsc{Case 1}: $|I(F)|=1$. Let $t_0\in\immsuc_{T}(w_0)$ be the unique element of $I(F)$. By (\ref{L}),
we have $L=L_T(W_{t_0})$. Hence,
\[ h(S)\stackrel{(\ref{hs})}{=}|L|+1=|L_T(W_{t_0})|+1=h(W_{t_0})+1\stackrel{(\ref{hw})}{\mik} 2|F|-1. \]
\noindent \textsc{Case 2:} $|I(F)|\meg 2$. Notice that
\begin{eqnarray*}
|L| & \stackrel{(\ref{L})}{\mik} & \sum_{t\in I(F)} |L_{T}(W_t)| \stackrel{(\ref{eqnew})}{\mik}
2 \sum_{t\in I(F)} |F\cap \suc_{T}(t)|- |I(F)| \\
& \stackrel{(\ref{eqhhh})}{\mik} & 2|F|-|I(F)|\mik 2|F|-2.
\end{eqnarray*}
Combining (\ref{hs}) and the above estimate we conclude that $h(S)\mik 2|F|-1$.
The above case are exhaustive, and so, the proof is completed.
\end{proof}


\end{document}